\documentclass[11pt,reqno]{amsart}
\usepackage[top=1.3in,bottom=1.3in,left=1.3in,right=1.3in]{geometry}
\usepackage{color}

\usepackage{amssymb}
\usepackage{amsmath}
\usepackage{amsfonts}
\usepackage{geometry}
\usepackage{graphicx}
\usepackage{mathrsfs,amssymb}

\usepackage[utf8]{inputenc}
\usepackage[T1]{fontenc}
\usepackage{lmodern}

\usepackage{hyperref} 
\usepackage{cleveref}

\usepackage{enumerate}

\newtheorem{theorem}{Theorem}[section]
\newtheorem{proposition}[theorem]{Proposition}

\newtheorem{lemma}[theorem]{Lemma}

\theoremstyle{definition}
\newtheorem{definition}[theorem]{Definition}

\theoremstyle{remark}
\newtheorem{remark}[theorem]{Remark}

\numberwithin{equation}{section}

\renewcommand{\Re}{\operatorname{Re}}

\newcommand{\rg}{\operatorname{rg}}

\newcommand{\R}{\mathbb{R}}
\newcommand{\N}{\mathbb{N}}
\newcommand{\C}{\mathbb{C}}
\newcommand{\Z}{\mathbb{Z}}

\newcommand{\la}{\lambda}

\newcommand{\mc}[1]{\mathcal{#1}}

\title[]{Stable blowup profile for a semilinear Heat Equation with spatially inhomogeneous nonlinearity}

\author{Irfan Glogi\'c}
\address{Fakult\"at f\"ur Mathematik, Universit\"at Bielefeld, D-33501 Bielefeld, Germany}
\email{irfan.glogic@uni-bielefeld.de}

\author{Sarah Kistner}
\address{Universit\"at Innsbruck, Institut f\"ur Mathematik,\\ Technikerstraße 13, 6020 Innsbruck, Austria}
\email{Sarah.kistner@uibk.ac.at}

\author{Birgit Sch\"orkhuber}
\address{Universit\"at Innsbruck, Institut f\"ur Mathematik,\\ Technikerstraße 13, 6020 Innsbruck, Austria}
\email{Birgit.Schoerkhuber@uibk.ac.at}

\thanks{The research of I.G. was funded in whole or in part by the Austrian Science Fund (FWF) 10.55776/PAT5825523.}

\usepackage{kantlipsum}

\begin{document}
	\maketitle

\begin{abstract}
We study the focusing semilinear heat equation with an additional defocusing Hénon-type nonlinearity, the coupling of which is measured by a constant $c >0$. For $c \in (0,c^*)$, the model admits a closed-form self-similar blowup solution in every space dimension $d \geq 1$. Restricting ourselves to the three-dimensional case, we study the stability of this solution under small non-radial perturbations. By working in intersection Sobolev spaces with additional angular regularity, we prove finite co-dimension stability for all admissible values of $c$. Furthermore, we analyze the spectrum of the underlying linearized operator and we prove stable blowup for the cubic-quintic case and $c$ sufficiently close to $c^*$. Finally, we discuss the situation for small values of $c$ and use a modified version of the classical GGMT criterion to give an upper bound on the number of unstable eigenvalues. 
\end{abstract}

\section{Introduction}

\noindent We consider semilinear heat equations of the form
\begin{align} \label{1}
\partial_t u - \Delta u = |u|^{p-1}u - c |x|^{2} |u|^{2p-2}u,
\end{align}
for a real function $u = u(t,x)$, where $(t,x) \in I \times \R^d$ for some interval $I \subseteq \R$ containing zero, $p > 1$, $c \in \R$, and $d \in \N$ denoting the dimension. Eq.~\eqref{1} can be thought of as a perturbation of the focusing power nonlinearity heat equation by a so-called H\'enon-type nonlinearity, which is a spatially inhomogeneous nonlinear term that includes an increasing potential. Local well-posedness for H\'enon-type parabolic equations is well understood, see the recent works \cite{ChiIkeTan2022, ChiIdeTanTay2024} and references therein. Subsequently, the central question in this context is that of singularity formation: ~Can smooth and localized initial data lead to loss of regularity in finite time? Once finite-time blowup has been established, attention naturally turns to the classification of all possible blowup profiles. Of particular interest is the identification of generic profiles, i.e., those that persist under small perturbations. In the case of the well-studied focusing power nonlinearity heat equation
 \begin{align}\label{NLH}
 	\partial_t u - \Delta u = |u|^{p-1}u,
 \end{align}
 generic blowup is fairly well understood. Indeed, the associated ODE in $t$ (obtained by neglecting the Laplacian) admits the spatially homogeneous blowup solution
 \begin{equation}\label{ODE}
 	v_T(t) = (T-t)^{-\frac{1}{p-1}} (p-1)^{-\frac{1}{p-1}}, \quad t \in [0,T),
 \end{equation}
 which is known to be stable; see, e.g.~\cite{MerZaa97,FerMerZaa00,ColMerRap17,Har24}. This result can, in fact, be extended to more general nonlinearities that depend only on the profile $u$, as the underlying ODE structure remains present. However,  spatial inhomogeneities in the nonlinearity destroy the ODE structure, and the problem of generic blowup becomes more involved. 
 In certain cases, when the nonlinear scaling symmetry is present, one can look for self-similar solutions, which provide concrete examples of blowup, see, e.g.~\cite{FilTer2000, IagSan2023, GenWan2024} for existence results in the context of heat equations with spatially inhomogeneous nonlinearities. Whether or not a constructed self-similar blowup solution is stable depends, however, on the precise shape of the underlying similarity profile, which is typically not available in closed form. This renders the stability analysis challenging, and, to our knowledge, in all of the instances mentioned above, the question of stability remains open.

In this paper, we focus on the specific form of the spatially inhomogeneous nonlinearity in Eq.~\eqref{1}. Our choice is guided by two requirements: first, that the nonlinearity depends smoothly on $x$; and second, that the resulting equation obeys the same scaling law as its pure power counterpart \eqref{NLH}, namely
\[u \mapsto u_{\lambda}, \quad u_{\lambda}(t,x) = \lambda^{\frac{2}{p-1}} u(\lambda^2 t, \lambda x), \quad \lambda > 0.\]  

We find that for each $d \in \N$, $p>1$ and $0 < c < \frac{p}{d^2}$ there is an \textit{explicit} radial self-similar blowup solution of the following form
\begin{align} \label{profile}
u_T(t,x):= (T-t)^{-\frac{1}{p-1}} \phi \left( \frac{|x|}{\sqrt{T-t}} \right), \quad \text{where} \quad \phi(r) = \left(\frac{a}{b+r^2} \right)^{\frac{1}{p-1}},
\end{align}
with the constants
\begin{align} \label{Constants}
a: = \frac{2}{p-1} \sqrt{\frac{p}{c}}, \quad b :=2 \left( \sqrt{\frac{p}{c}} -d \right).
\end{align}

We note that $u_T$ converges pointwise to the ODE blowup \eqref{ODE} in the limit $c \to 0^+$, i.e., for fixed $t \in [0,T)$  we have

\begin{align*}
\Vert u_T(t, \cdot) - v_T(t) \Vert_{L^{\infty}_{loc}(\R^d)} \rightarrow 0 \quad \text{as} \quad c \rightarrow 0^+.
\end{align*}
Thus, the ODE blowup associated with Eq.~\eqref{NLH} represents the limiting case for the blowup described by $u_T$ for \eqref{1} as $c \to 0^{+}$. In particular, for small values of   $c >0$, Eq.~\eqref{1} can be interpreted as a regularization of Eq.~\eqref{NLH}, resulting in a spatially non-trivial blowup profile that decays at infinity.

\subsection{Main results}
The goal of this paper is to develop a functional analytic framework to rigorously analyze the nonlinear stability of $u_T$ under small, non-radial perturbations. However, we believe that the techniques developed in this paper can be applied to more general problems, e.g.~to study stability of self-similar solutions of parabolic H\'enon equations constructed by Filippas and Tertikas in \cite{FilTer2000} under suitable restrictions on the nonlinearity. 

For technical reasons, as well as for the sake of readability of the paper, we restrict ourselves to the (physically) most interesting case $d=3$, and to odd values of $p >1$. Our  approach is in the spirit of \cite{DonSch2016}, \cite{BieDonSch2016}, \cite{Glogi__2020}, \cite{GloSch2024}, \cite{MR4730409}, where the stability problem is studied in similarity coordinates via semigroup methods in intersection Sobolev spaces. However, in order to control the spatially inhomogeneous nonlinearity, we work in Sobolev spaces with  additional angular regularity, see e.g.~\cite{MR2769870}, \cite{SteRod2005}. More precisely, we define the angular intersection Sobolev space $\dot{H}^{s,1}_{\omega}(\R^3) \cap \dot{H}^{k,1}_{\omega}(\R^3)$ for $0 \leq s < \frac{3}{2} < k$ as the completion of $C^{\infty}_c(\R^3)$ with respect to the norm 
\begin{align*}
\Vert f \Vert_{\dot{H}^{s,1}_{\omega} \cap \dot{H}^{k,1}_{\omega}}^2 = \Vert f \Vert_{\dot{H}^{s} \cap \dot{H}^{k}}^2 + \sum_{1 \leq i < j \leq 3} \Vert \Omega_{ij} f \Vert_{\dot{H}^{s} \cap \dot{H}^{k}}^2,
\end{align*}

 with $\Omega_{ij}:= x_i \partial_j - x_j \partial_i$ and set
\[X_{s,k}^{\omega} := \dot{H}^{s,1}_{\omega}(\R^3) \cap \dot{H}^{k,1}_{\omega}(\R^3)\]
with suitable exponents $\frac{3}{2}- \frac{2}{p-1} < s < \frac{3}{2}$ and suitable $k \in \N$, $k > \frac{3}{2}$, see Section \ref{Sec:Funct_Setup}. Our first result proves finite co-dimension stability of $u_T$ for any admissible value of $c$. Without loss of generality, we consider perturbations of $u_T$ for $T=1$ such that $u_1(0,\cdot) = \phi(|\cdot|)$ with $\phi$ given in Eq.~\eqref{profile}.

\begin{theorem}[Finite co-dimension stability] \label{maintheorem}
Let $d=3$ and $p \in \N$, $p \geq 3$ with $p$ being odd and $0 < c < \frac{p}{d^2}$. Furthermore, assume that 
\begin{align} \label{sk}
s = \frac{3}{2} - \frac{2}{p-1} + \varepsilon \quad \text{with} \quad 0 < \varepsilon < \frac{4}{(p-1)(3p-1)} \quad \text{and} \quad k = \begin{cases} 2, \ \text{if $p=3$} \\ 4, \ \text{if $p \geq 5$} \end{cases}.
\end{align}
Then, there exists $\gamma > 0$, $N \in \N$ and functions $f_j \in X_{s,k}^{\omega}$ with $j \in \{1,...,N \}$ such that the following holds. Let $\varphi_0 \in \mc S(\R^3)$ with 
\begin{align*}
\Vert \varphi_0 \Vert_{X_{s,k}^{\omega}} \leq \gamma,
\end{align*}
then there exist constants $a_j(\varphi_0) \in \R$, $j \in \{1,...,N\}$, depending Lipschitz continuously on $\varphi_0$ such that the initial value problem \eqref{1} with data
\begin{align} \label{P1}
u(0, \cdot)&  = \phi(|\cdot|) + \varphi_0 + \sum_{j=1}^N a_j(\varphi_0) f_j
\end{align}
has a classical solution $u \in C^{\infty}([0, 1) \times \R^3)$, that blows up at the origin as $t \rightarrow 1^{-}$. Furthermore, $u$ can be decomposed in the following way
\begin{align*}
u(t,x) = (1-t)^{-\frac{1}{p-1}} \left( \phi \left( \tfrac{|x|}{\sqrt{1-t}} \right) + \tilde{\varphi} \left( t, \tfrac{x}{\sqrt{1-t}} \right) \right),
\end{align*}
where 
\begin{align} \label{Conv}
\Vert \tilde{\varphi}(t, \cdot) \Vert_{L^{\infty}(\R^3)} \lesssim \Vert \tilde{\varphi}(t, \cdot) \Vert_{X_{s,k}^{\omega}} \rightarrow 0,
\end{align}
as $t \rightarrow 1^-$.
\end{theorem}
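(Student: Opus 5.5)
We pass to similarity variables $\tau = -\log(1-t)$, $\xi = x/\sqrt{1-t}$, and write $u(t,x) = (1-t)^{-\frac{1}{p-1}}\psi(\tau,\xi)$. The equation \eqref{1} then becomes the autonomous problem
\[
\partial_\tau \psi = \Delta \psi - \tfrac12 \xi\cdot\nabla\psi - \tfrac{1}{p-1}\psi + \psi^p - c|\xi|^2\psi^{2p-1},
\]
using that $p$ is odd, so both nonlinear terms are polynomials. The profile $\phi$ is a static solution of this equation. Inserting $\psi = \phi + \varphi$ gives
\[
\partial_\tau\varphi = \mc L_0\varphi + \mc L'\varphi + N(\varphi),
\]
where:
\begin{itemize}
\item $\mc L_0 = \Delta - \frac12\xi\cdot\nabla - \frac1{p-1}$ is the free operator;
\item $\mc L'\varphi = (p\phi^{p-1} - (2p-1)c|\xi|^2\phi^{2p-2})\varphi$ is the potential term;
\item $N$ collects the terms of order at least two in $\varphi$.
\end{itemize}

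Since $\phi^{p-1} = a/(b+r^2)$, the potential $V$ is smooth, radial and bounded, and it decays like $\langle\xi\rangle^{-2}$. The term $c|\xi|^2\phi^{2p-2}$ also decays like $\langle\xi\rangle^{-2}$, because $\phi^{2p-2}\sim r^{-4}$. Every time-translation parameter $T$ yields a symmetry mode. We therefore expect the unstable eigenvalue $\lambda=1$, with eigenfunction $\Lambda\phi$ generated by varying $T$, together with possibly further unstable eigenvalues. The theorem only claims finite co-dimension, so we do not need to locate these.

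\emph{Step 1 (free semigroup).} $\mc L_0$ generates an explicit semigroup via the Mehler-type formula $S_0(\tau)f = e^{-\frac{\tau}{p-1}}\,[e^{(e^\tau-1)\Delta}f](e^{\tau/2}\cdot)$. Homogeneous scaling gives
\[
\|S_0(\tau)f\|_{\dot H^\sigma} \le e^{(\frac32-\sigma-\frac2{p-1})\frac\tau2}\|f\|_{\dot H^\sigma},
\]
and the angular derivatives $\Omega_{ij}$ commute with both the heat flow and the dilation. Hence on $X^\omega_{s,k}$ we obtain the growth bound $\omega_0 = \max_{\sigma\in\{s,k\}}\frac12(\frac32-\sigma)-\frac1{p-1}$. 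The choice $s>\frac32-\frac2{p-1}$ makes $\omega_0<0$.

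\emph{Step 2 (compactness of the potential).} We show that $\mc L'$ is compact on $X^\omega_{s,k}$. Commutators $[\Omega_{ij},V]$ vanish since $V$ is radial, so angular regularity is harmless. Compactness then follows from the smoothness and decay of $V$ via Rellich-type arguments in homogeneous spaces, approximating $V$ by cutoffs, which is the standard argument from the earlier intersection-space works. By Bounded Perturbation and compact perturbation theory, $\mc L = \mc L_0+\mc L'$ generates $S(\tau)$, and $\sigma(\mc L)\cap\{\Re\lambda > \omega_0+\delta\}$ consists of finitely many eigenvalues of finite algebraic multiplicity. With a Riesz projection $P$ onto the unstable and neutral part, of finite rank $N$, we get $\|S(\tau)(1-P)f\|\lesssim e^{-\mu\tau}\|f\|$ for some $\mu>0$, and $S(\tau)P$ acts as a finite-dimensional matrix exponential with $\Re\lambda\ge0$. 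The $f_j$ are taken as a basis of $\rg P$.

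\emph{Step 3 (nonlinear estimates).} I expect this to be the main obstacle. The spatial weight $|\xi|^2$ in the quintic-type term $c|\xi|^2((\phi+\varphi)^{2p-1}-\dots)$ is unbounded. Its control requires the angular regularity together with radial decay: a Strauss-type estimate $|\xi|^{\alpha}|f(\xi)|\lesssim\|f\|_{X^\omega_{s,k}}$ on $\R^3$, valid for suitable $\alpha$ when $s<\frac32$ and angular derivatives are present, available in \cite{MR2769870,SteRod2005}. Combined with Sobolev embedding $X^\omega_{s,k}\hookrightarrow L^\infty$ (since $s<\frac32<k$), product estimates in $\dot H^s\cap\dot H^k$, and Leibniz rules for $\Omega_{ij}$, we will prove the local Lipschitz bound
\[
\|N(f)-N(g)\|\lesssim(\|f\|+\|g\|)\|f-g\|
\]
on small balls. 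The upper constraint on $\varepsilon$ and the choices $k=2,4$ are exactly what I expect to be needed to absorb the weight, by placing powers of $\phi$ (decaying like $r^{-2/(p-1)}$) against $|\xi|^2$.

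\emph{Step 4 (fixed point and conclusion).} We solve the Duhamel equation with the Lyapunov–Perron correction
\[
\Phi(\tau)=S(\tau)(u_0-C(u_0,\Phi))+\int_0^\tau S(\tau-\sigma)N(\Phi(\sigma))\,d\sigma,
\]
where $C(u_0,\Phi) = P\big(u_0+\int_0^\infty e^{-\sigma\mc L}N(\Phi(\sigma))\,d\sigma\big)$. This is a contraction on the space of functions with $\sup e^{\mu'\tau}\|\Phi(\tau)\|\le\delta$, giving decay $\|\Phi(\tau)\|\lesssim e^{-\mu'\tau}$. The correction lies in $\rg P$, so it equals $\sum a_j f_j$ with Lipschitz coefficients, which yields \eqref{P1}. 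We do not need to vary $T$, since the symmetry eigendirection is absorbed into $\rg P$.

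Smoothness of the solution follows from parabolic regularity for Schwartz data. Blowup at the origin follows because $\phi(0)>0$ while $\tilde\varphi\to0$ in $L^\infty$. The estimate \eqref{Conv} is the decay of $\Phi$ together with $X^\omega_{s,k}\hookrightarrow L^\infty$.
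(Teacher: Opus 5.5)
Your overall architecture matches the paper's: similarity variables, free decay on $X^\omega_{s,k}$ from $s>\frac32-\frac2{p-1}$, Strauss-type control of the weight, and a Lyapunov--Perron correction. However, Step 2 rests on a false claim. Multiplication by $V$ is bounded on $X^\omega_{s,k}$ but \emph{not} compact. Take $f_n(x)=n^{-k}\chi(|x|)\cos(n|x|)$ with $\chi\in C^\infty_c$ supported in an annulus on which $V\neq 0$. These functions are radial, so $\Omega_{ij}f_n=0$. They satisfy $\|f_n\|_{\dot H^s}\lesssim n^{s-k}\to 0$ and $\|f_n\|_{\dot H^k}\simeq 1$, so they are bounded and weakly null in $X^\omega_{s,k}$. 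Yet $\|Vf_n\|_{\dot H^k}\simeq \|V\chi\|_{L^2}\not\to 0$. Rellich-type arguments need a gain of derivatives, and multiplication provides none. The ``standard argument'' you import comes from the wave-equation works. There the potential maps the first component of $\dot H^k\times\dot H^{k-1}$ into the second and so gains a derivative; a first-order-in-time parabolic problem has no such gain. Without compactness of $\mathcal L'$ you cannot use the compact-perturbation theorem to get $\omega_{\mathrm{ess}}(S)\le\omega_{\mathrm{ess}}(S_0)<0$. Since $\mathcal L$ is not normal on $X^\omega_{s,k}$, spectral information on the generator alone does not give $\|S(\tau)(1-P)f\|\lesssim e^{-\mu\tau}\|f\|$.

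The missing ingredient is parabolic smoothing. The paper shows that $\mathcal L' S_0(\tau')$ is compact for every $\tau'>0$:
\begin{itemize}
\item the heat kernel supplies the extra derivatives;
\item the decay of $V$ gives uniformly small tails;
\item a Rellich-type criterion (boundedness in $H^1$ plus tightness) then applies.
\end{itemize}
Integrating this strongly continuous family of compact operators shows that $S(\tau)-S_0(\tau)=\int_0^\tau S(\tau-\tau')\mathcal L' S_0(\tau')\,d\tau'$ is compact. Analytic Fredholm theory then shows that $\sigma(S(\tau))$ outside the unit disc consists of finitely many isolated eigenvalues of finite algebraic multiplicity. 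By the spectral mapping theorem for the point spectrum, these are $e^{\tau\lambda}$ with $\lambda$ an eigenvalue of $\mathcal L$ and $\Re\lambda\ge 0$. Restricting to $\ker P$ then gives a negative growth bound. With Step 2 replaced by this argument, the rest of your plan goes through.

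Three smaller points:
\begin{itemize}
\item In Step 3, the top-order terms such as $c|\xi|^2 f^{2p-1}$ contain no factor of $\phi$. The weight must therefore be absorbed by the decay $|x|^{\frac{2}{p-1}-\varepsilon}|f(x)|\lesssim\|f\|_{\dot H^{s,1}_\omega}$ of the perturbation itself, together with Hardy's inequality. This is where the upper bound on $\varepsilon$ comes from.
\item Your Mehler formula has the dilation reversed: it should be $e^{-\frac{\tau}{p-1}}[e^{(1-e^{-\tau})\Delta}f](e^{-\tau/2}\cdot)$. The $\dot H^\sigma$ bound you state is nonetheless the correct one.
\item Reading the coefficients $a_j$ directly off $-C(u_0,\Phi)\in\rg P$ is a legitimate shortcut. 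The paper instead constructs the manifold $\mathcal M$ and then runs a second fixed point for the $a_{i,j}$.
\end{itemize}
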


We note that the scaling of Eq.~\eqref{1} leaves the $\dot{H}^{s_c}(\R^d)$ norm, $s_c = \frac{d}{2}-\frac{2}{p-1}$, invariant. Thus, our result applies to the energy subcritical, critical and supercritical case. 

Theorem \ref{maintheorem} proves stability of the blowup solution $u_T$ provided that the initial condition is corrected along a finite number of unstable directions. The functions $f_j$ correspond to eigenfunctions of the  underlying linearized operator. In fact, we construct a co-dimension $N$ Lipschitz manifold $\mathcal{M}$ in the initial data space containing zero such that smooth perturbations chosen from $\mathcal{M}$ leads to blowup via $u_1$, see Proposition \ref{SolutionC1}. Subsequently, we show that under the conditions of Theorem \ref{maintheorem}, we have $\varphi_0 + \sum_{j=1}^N a_j(\varphi_0) f_j \in \mc M$. 

Obviously, an improvement of this result relies on knowledge about the spectrum of the linearized operator, which in fact reduces to the question of unstable eigenvalues.  Notably, their number seems to depend (continuously) on the value of $c$, see Remark \ref{RemSpec} below. In particular, if $c$  is large enough within its admissible range, the blowup is stable. For the sake of concreteness, we only state the result for $p=3$. 

\begin{theorem}[Stable blowup] \label{maintheorem2}
Let $d=3$ and $p=3$ and choose the same set of Sobolev exponents $(s,k)$ with $\varepsilon >0$ as in \eqref{sk}. 
Then, for any $c \in (\frac{1}{4}, \frac{1}{3})$ there exists $\gamma > 0$ such that   any initial condition of the form
\begin{align*}
u(0,\cdot) = \phi(|\cdot|) + \varphi_0,
\end{align*}
with $\varphi_0 \in \mc S(\R^3)$ satisfying
\begin{align*}
\Vert \varphi_0 \Vert_{X_{s,k}^{\omega}} \leq \gamma,
\end{align*}
there exists $T > 0$ and a classical solution $u \in C^{\infty}([0, T) \times \R^3)$ to Eq.~\eqref{1}, that blows up at the origin as $t \rightarrow T^{-}$. Furthermore, $u$ can be decomposed as
\begin{align*}
u(t,x) = (T-t)^{-\frac{1}{p-1}} \left( \phi \left( \frac{|x|}{\sqrt{T-t}} \right) + \tilde{\varphi} \left( t, \frac{x}{\sqrt{T-t}} \right) \right),
\end{align*}
where
\begin{align} \label{Conv}
\Vert \tilde{\varphi} (t, \cdot) \Vert_{L^{\infty}(\R^3)} \lesssim \Vert \tilde{\varphi}(t, \cdot) \Vert_{X_{s,k}^{\omega}} \rightarrow 0,
\end{align}
as $t \rightarrow T^-$.
\end{theorem}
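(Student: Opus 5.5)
The plan is to pass to similarity variables adapted to a free blowup time $T$ near $1$, namely $\tau = -\log(T-t) + \log T$ and $\xi = x/\sqrt{T-t}$. We then write $u(t,x) = (T-t)^{-\frac{1}{p-1}}\big(\phi(|\xi|) + \varphi(\tau,\xi)\big)$. This turns Eq.~\eqref{1} into an evolution equation of the form
\begin{align*}
\partial_\tau \varphi = \mc L \varphi + N(\varphi), \qquad \mc L = \Delta - \tfrac12 \xi\cdot\nabla - \tfrac{1}{p-1} + V,
\end{align*}
where the potential is
\begin{align*}
V(\xi) = p\,\phi(|\xi|)^{p-1} - c(2p-1)|\xi|^2\phi(|\xi|)^{2p-2}.
\end{align*}
Here $N$ collects the higher-order terms, and the initial datum $\phi(|\cdot|) + \varphi_0$ becomes $T^{\frac{1}{p-1}}\big(\phi(\sqrt T\,|\cdot|) + \varphi_0(\sqrt T\,\cdot)\big) - \phi(|\cdot|)$.

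From the framework behind Theorem~\ref{maintheorem}, we take as given three facts. First, $\mc L$ generates a semigroup on $X^\omega_{s,k}$. Second, it has only finitely many eigenvalues with $\Re\lambda \ge 0$, outside a half-plane $\Re\lambda \le -\omega_0 < 0$. Third, $N$ is locally Lipschitz on small balls, with quadratic smallness, in $X^\omega_{s,k}$; this is where the angular regularity controls the $|\xi|^2$ factor.

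The one genuinely unstable direction comes from time-translation symmetry. Differentiating $u_T$ in $T$ gives the eigenfunction
\begin{align*}
g(\xi) = \tfrac{1}{p-1}\phi(|\xi|) + \tfrac12 |\xi|\phi'(|\xi|), \qquad \mc L g = g,
\end{align*}
so $\lambda = 1$ is always an eigenvalue. The core of the proof is therefore spectral. For $p = 3$ and $c \in (\frac14, \frac13)$, I would show that $\sigma(\mc L) \cap \{\Re\lambda \ge 0\} = \{1\}$, that this eigenvalue is simple, and that its Riesz projection $P$ has rank one. Once this is known, the rest is the standard modulation argument:

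1. Solve the Duhamel equation with the Lyapunov–Perron correction $\varphi(\tau) = S(\tau)(1-P)\,\cdot + \int_0^\tau \dots - e^{\tau}\big(\dots\big)$, obtaining a global solution decaying like $e^{-\omega\tau}$ in $X^\omega_{s,k}$ for every small datum, up to a correction term in $\operatorname{rg} P$.
2. Show that the correction depends continuously on $T$. Since $\partial_T$ of the transformed datum at $T = 1$ is a nonzero multiple of $g$ plus small terms, a one-dimensional fixed-point or intermediate-value argument yields a $T$ near $1$ for which the correction vanishes.
3. Use parabolic smoothing to upgrade the solution to a classical one, and deduce \eqref{Conv} from the embedding $X^\omega_{s,k} \hookrightarrow L^\infty$, which holds because $s < \frac32 < k$.

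Since this mirrors the proof of Theorem~\ref{maintheorem} with $N = 1$ and the $f_1$-direction absorbed by $T$, I would only sketch it.

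The main obstacle is excluding unstable eigenvalues other than $1$. I would proceed in four steps:

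1. Decompose into spherical harmonics of degree $\ell$. Each resulting radial operator is self-adjoint in the weighted space $L^2(e^{-|\xi|^2/4}r^2\,dr)$. Eigenfunctions in $X^\omega_{s,k}$ with $\Re\lambda \ge 0$ have the correct decay to lie in that weighted space, so such eigenvalues are real. This reduces each sector to a Sturm–Liouville problem.
2. For $\ell = 0$, show that $g$ has exactly one zero. Since $\phi'<0$ and $g$ equals $\phi$ times an explicit rational function of $r^2$ through $b$, this can be checked directly, and it holds precisely when $b$ is small, i.e.\ $c$ close to $\frac13$. Sturm oscillation then makes $1$ the second eigenvalue. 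The ground state is associated with the time-derivative mode; I would check whether it corresponds to a symmetry mode (for instance a translation in $u$) or is genuinely stable. I expect that for $c > \frac14$, with $b < 4 - 6 = \ldots$ explicit, the ground state of the $\ell = 0$ sector is exactly $g$ with $\lambda = 1$ and no zero, which would make $1$ the top eigenvalue.
3. For $\ell = 1$, use the fact that translations are not a symmetry of Eq.~\eqref{1}, because of the $|x|^2$ factor. Instead, I would bound the top $\ell = 1$ eigenvalue by a comparison argument: $\sup V$ is explicit, and $\ell(\ell+1)/r^2 \ge 2/r^2$ combined with a Hardy-type inequality in the Gaussian weight shows the top eigenvalue is negative when $c > \frac14$.
4. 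For $\ell \ge 2$, the angular term only makes the operator more negative, so those sectors follow from $\ell = 1$.

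The threshold $\frac14$ should emerge exactly from these explicit inequalities involving $b = 2(\sqrt{3/c} - 3)$.
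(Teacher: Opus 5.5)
Your nonlinear scheme matches the paper's argument: the Lyapunov--Perron correction with a rank-one projection, choosing $T$ near $1$ through the expansion of the transformed datum along $g$, then smoothing and the $L^\infty$ embedding. The spectral step, however, has a genuine gap in the radial sector. The eigenfunction you write down simplifies to $g = \frac{b}{p-1}\,\frac{\phi}{b+r^2}$. This is strictly positive for every admissible $c$, since $b = 2(\sqrt{3/c}-3) > 0$. So the claim that $g$ has exactly one zero for small $b$ is false. Had it been true, $1$ would be the second radial eigenvalue and there would be an extra unstable mode, contradicting the theorem. With $g>0$, Sturm oscillation only tells you that $\lambda=1$ is the \emph{largest} eigenvalue of $\mathcal{L}$ on $\ell=0$; it says nothing about eigenvalues in $[0,1)$. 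That is exactly where the danger lies. In the limiting case $c=0$ (ODE blowup) the radial sector carries the neutral eigenvalue $\lambda=0$, with eigenfunction $|y|^2-6$. You must prove that this second radial eigenvalue is strictly negative for $c\in(\frac14,\frac13)$, and nothing in your plan controls it.

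The missing idea is to remove the known mode and then prove positivity of what remains. The paper works with the Schr\"odinger form $\mathcal{B}_0=-\partial_r^2+q_0$, where an eigenvalue $\lambda$ of $\mathcal{L}$ corresponds to $-\lambda$. Its ground state is $\tilde g(r)=e^{-r^2/8}\,r\,(b+r^2)^{-p/(p-1)}$ with eigenvalue $-1$. The paper factorizes $\mathcal{B}_0=B_+B_--1$ with $B_\pm=\mp\partial_r-\tilde g'/\tilde g$ and passes to the supersymmetric partner $\mathcal{B}_S=B_-B_+-1$. This partner is isospectral to $\mathcal{B}_0$ modulo $-1$ and has an explicit potential $q_S$. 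Checking $q_S>0$ for $c\in(\frac14,\frac13)$ gives $\sigma(\mathcal{B}_0)\setminus\{-1\}\subset(0,\infty)$. Counting zeros of the regular solution of $\mathcal{L}f=0$ would be an alternative, but that needs a genuine estimate at $\lambda=0$, not just the sign of $g$.

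A secondary problem concerns $\ell=1$: a comparison via $\sup V$ cannot close. Without $V$, the $\ell=1$ harmonic-oscillator part has spectral bottom exactly $1$. Meanwhile $\sup V=V(0)=3a/b$ exceeds $11$ on all of $(\frac14,\frac13)$ and blows up as $c\to\frac13$. What works, and what the paper does for all $\ell\ge1$ at once, is pointwise positivity of $q_\ell(r)=\frac{r^2}{16}-\frac14-V(r)+\frac{\ell(\ell+1)}{r^2}$. This exploits the fact that $r^2V(r)$ stays bounded uniformly in $c$ even though $V(0)$ does not.
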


Eq.~ \eqref{Conv} implies that the solution $u$ from Theorem \ref{maintheorem2}, when dynamically rescaled, converges back to $\phi$ in $L^{\infty}(\R^3)$, meaning that
\begin{align*}
 \Vert (T-t)^{\frac{1}{p-1}} u(t, \sqrt{T-t} \cdot) - \phi(|\cdot|) \Vert_{L^{\infty}(\R^3)} \rightarrow 0, \quad \text{as $t \rightarrow T^-$}.
\end{align*}
The same holds true for the solution $u$ of Theorem \ref{maintheorem} with $T=1$.

\begin{remark}\label{RemSpec}  Theorem \ref{maintheorem2} is based on spectral analysis of the underlying linearised problem using decomposition into spherical harmonics. For any $c$ within the admissible range, time-translation invariance induces an unstable eigenvalue $\lambda =1$. By using supersymmetry, we investigate the spectrum modulo this symmetry eigenvalue and find that for large enough values of $c$ no additional unstable eigenvalues occur, simply due to positivity of the associated potentials. 
This explains the assumption on $c$ in Theorem \ref{maintheorem2}. In contrast, as  $c \to 0^+$, we expect a genuine unstable eigenvalue to emerge (for $\ell = 1$). This can be understood by examining the limiting case of the ODE blowup and can be justified perturbatively for small enough values of $c$. In particular, we conjecture that $u_T$ is co-dimension one unstable for $c$ close to zero. 

An upper bound on the number of unstable eigenvalues can be given by using a version of the GGMT criterion \cite{GlaMarGroThi1976} which we adapt to our problem in Appendix \ref{GGMTv}, Theorem \ref{TheoremA}. We believe that this result might be useful more generally in the context of stability analysis of self-similar solutions of nonlinear parabolic equations.  

For the problem at hand, we provide examples on upper bounds for particular choices of $c$ in Appendix \ref{Discussion} and discuss numerical evidence for the structure of the spectrum. However, due to the complicated dependence of the involved quantities on $c$ is rather challenging to prove rigorous results uniformly in $c$; we thus leave this to future work. 
\end{remark}

\begin{remark}
A key step in the proofs of Theorem \ref{maintheorem} and Theorem \ref{maintheorem2} is the translation of spectral information for the non-self-adjoint operator governing the linearized evolution into growth bounds for the corresponding semigroup. In contrast to previous works, we avoid both energy estimates (via graph norms) as in \cite{BieDonSch2016}, \cite{Glogi__2020}, \cite{GloSch2024}, \cite{MR4730409} and resolvent estimates as in \cite{DonSch2016}, and instead provide a substantially shorter and more abstract argument based on a smoothing lemma (see Lemma \ref{Lemmacompactperturbation}) and the spectral properties of the semigroup, c.f. also \cite{JiaSve2015}.

For the nonlinear part of the argument, we apply a generalized Strauss inequality for non-radial functions in weighted angular Sobolev spaces proved in \cite{MR2769870}, Corollary 1.4, which allows us to control the unbounded weight in the nonlinearity. In three space dimensions the relevant inequality (1.5) of Corollary 1.4 \cite{MR2769870} implies
\begin{align*}
\sup_{x \in \R^3} ||x|^{\frac{3}{2}-t} f(x)| \lesssim \Vert f \Vert_{\dot{H}^{t,1}_{\omega}},
\end{align*}
for all Schwartz functions $f \in \mathcal{S}(\R^3)$, where $t$ satisfies $t \in \left( \frac{1}{2}, \frac{3}{2} \right)$. We emphasize that the definition of the $\Vert \cdot \Vert_{\dot{H}^{t,1}_{\omega}}$-norm given in \cite{MR2769870} is equivalent to our norm defined in Section \ref{Sec:Funct_Setup} for every $f \in \mathcal{S}(\R^3)$. This is due to the expansion of Schwartz functions in $\dot{H}^t(\R^3)$ via spherical harmonics and defining the action of $(-\Delta_{\omega})^{\frac{1}{2}}$ via spectral resolution.
\end{remark}

\subsection{Notation and conventions}
We write $a \lesssim b$ if there exists a constant $C >0$, such that $a \leq Cb$ and we write $a \simeq b$ if $a \lesssim b$ and $b \lesssim a$. By $C^{\infty}(\R^d)$ and $\mc S(\R^d)$ we denote the space of smooth functions and the space of Schwartz functions, respectively. By $C^{\infty}_c(\R^d)$ we denote the standard test space consisting of smooth and compactly supported functions. In case of radial functions we use the lower index $r$  as in $C^{\infty}_r(\R^d)$, $\mc S_r(\R^d)$, $C^{\infty}_{c,r}(\R^d)$. For a closed linear operator $(\mathcal{L}, \mathcal{D}(\mathcal{L}))$, we write $\rho(\mathcal{L})$ for the resolvent set and $\sigma(\mathcal{L}) := \C \setminus \rho(\mathcal{L})$ for the spectrum. Given $\lambda \in \rho(\mathcal{L})$, we use the following convention for the resolvent $R_{\mathcal{L}}(\lambda) := (\lambda-\mathcal{L})^{-1}$. For $f \in C^{\infty}_c(\R^d)$ we use the following definition of the Fourier transform 
\begin{align*}
\hat{f}(\xi) = \mathcal{F}f(\xi) := (2 \pi)^{-\frac{d}{2}} \int_{\R^d} e^{-i \xi \cdot x} f(x) dx, \quad \xi \in \R^d.
\end{align*}

\section{Formulation of the problem}
We consider the initial value problem
\begin{align}\label{Eq:IVP_Pert}
\begin{split}
\partial_t u - \Delta u = |u|^{p-1}u - c |x|^{2} |u|^{2p-2}u, \\
u(0,\cdot) = \phi(|\cdot|) + v_0,
\end{split}
\end{align}
where $\phi(|\cdot|) = u_1(0,\cdot)$ denotes the profile of the self-similar solution $u_T$ for $T=1$ given in \eqref{profile}, and $v_0$ denotes a perturbation. Obviously,  $u_T$ depends on the parameter $c \in (0,\frac{p}{d^2})$. However, for the sake of readability, we omit this dependence subsequently in the notation.

\subsection{Similarity variables.} Let $T>0$, for $t \in [0,T)$ and $x \in \R^3$ we define
\begin{align} \label{Variables}
\tau = \tau(t) := \log \left( \frac{T}{T-t} \right) \quad \text{and} \quad y = y(t,x) := \frac{x}{\sqrt{T-t}}.
\end{align}
This choice of variables maps the time interval $[0,T)$ into the unbounded interval $[0, \infty)$. Next, we set $(T-t)^{\frac{1}{p-1}}u(t,x) = \psi(\tau(t), y(t,x))$ and make the ansatz 
\[\psi(\tau, y) = \phi(|y|)+\varphi(\tau,y)\]
for a function $\varphi$ to study the evolution near $\phi$.  By this, we can reformulate problem \eqref{Eq:IVP_Pert} as
\begin{align} \label{CE}
\begin{cases} \partial_{\tau} \varphi(\tau,\cdot) = L \varphi (\tau, \cdot) + \mathcal{N}(\varphi(\tau, \cdot)), \quad \tau >0, \\ \varphi(0, \cdot) = \mathcal{U}(v_0,T), \end{cases}
\end{align}
where $L= L_0 + L_1$,
\begin{align}\label{Def:L0L1}
L_0 f := \Delta f - \Lambda f, \quad L_1 f := V f,
\end{align}
with the potential $V$ given by
\begin{align}\label{Def:Potential}
V(y) = p \phi (|y|)^{p-1} - c(2p-1)|y|^2 \phi (|y|)^{2p-2},
\end{align}
and the operator $\Lambda$ defined as
\begin{align*}
\Lambda f(y) := \frac{1}{2} y \cdot \nabla f(y) + \frac{1}{p-1}f(y).
\end{align*}
The nonlinearity is given by
\begin{align} \label{N}
[\mathcal{N}(f)](y) = \eta\big(\phi(|\cdot|)+f(\cdot)\big)(y) - \eta\big(\phi(|\cdot|)\big)(y) - \eta^{\prime}\big(\phi(|\cdot|)\big)(y)f(y),
\end{align}
where we define 
\begin{align*}
[\eta(f)](y):=|f(y)|^{p-1}f(y)-c|y|^2f(y)^{2p-1},
\end{align*} 
for a function $f: \R^3 \rightarrow \R$ and $y \in \R^3$. Finally, the \textit{initial data operator} is given by
\begin{align} \label{InitialU}
\mathcal{U}(v_0,T) = T^{\frac{1}{p-1}}(\phi(\sqrt{T}|y|) +v_0(\sqrt{T}y))-\phi(|y|).
\end{align}

\subsection{Functional setup}\label{Sec:Funct_Setup}
To define a suitable function space for the evolution governed by \eqref{CE} we use homogeneous intersection Sobolev spaces equipped with additional angular regularity. For $f,g \in C^{\infty}_c(\R^3)$ we consider the homogeneous Sobolev inner product
\begin{align} \label{InnerProduct}
\langle f, g \rangle_{\dot{H}^s} := \langle | \cdot |^s \mathcal{F} f, | \cdot |^s \mathcal{F} g \rangle_{L^2(\R^3)},
\end{align}
inducing the norm $\Vert \cdot \Vert_{\dot{H}^s} := \sqrt{ \langle \cdot , \cdot \rangle_{\dot{H}^s} }$, where $s \geq 0$ and $\mathcal{F}$ is the three-dimensional Fourier transform. The intersection homogeneous Sobolev space $\dot{H}^s(\R^3) \cap \dot{H}^k(\R^3)$ for exponents $0 \leq s < \frac{3}{2} < k$ is defined as the completion of $C^{\infty}_c(\R^3)$ with respect to the norm 
\begin{align*}
\Vert f \Vert^2_{\dot{H}^s \cap \dot{H}^k} := \Vert \cdot \Vert^2_{\dot{H}^s} + \Vert \cdot \Vert^2_{\dot{H}^k}.
\end{align*}
In the following we make use of the fact that for every $m \in \N_0$ we have the equivalence
\begin{align} \label{EqNorm}
\Vert f \Vert^2_{\dot{H}^m} \simeq \sum_{\substack{\alpha \in \N_0^3 \\ |\alpha|=m}} \Vert \partial^{\alpha} f \Vert^2_{L^2(\R^3)},
\end{align}
for all functions $f \in C^{\infty}_c(\R^3)$.

Let $\Delta_{\omega} = \sum_{1 \leq i < j \leq 3} \Omega_{ij}^2$ be the Laplace-Beltrami operator on $S^2$ with $\Omega_{ij}:= x_i \partial_j - x_j \partial_i$. For functions $f \in C^{\infty}_c(\R^3)$ we define a Sobolev norm with additional angular regularity by
\begin{align*}
\Vert f \Vert_{\dot{H}^{s,1}_{\omega}}^2 := \Vert f \Vert_{\dot{H}^s}^2 + \sum_{1 \leq i < j \leq 3} \Vert \Omega_{ij} f \Vert_{\dot{H}^s}^2, \quad s \geq 0.
\end{align*}
We define the angular intersection Sobolev space $\dot{H}^{s,1}_{\omega}(\R^3) \cap \dot{H}^{k,1}_{\omega}(\R^3)$ for $0 \leq s < \frac{3}{2} < k$ as the completion of $C^{\infty}_c(\R^3)$ with respect to the norm 
\begin{align*}
\Vert f \Vert_{\dot{H}^{s,1}_{\omega} \cap \dot{H}^{k,1}_{\omega}}^2 := \Vert f \Vert_{\dot{H}^{s,1}_{\omega}}^2 + \Vert f \Vert_{\dot{H}^{k,1}_{\omega}}^2.
\end{align*}
By definition this is a Hilbert space and its elements can be identified with bounded continuous functions due to the continuous embedding into $L^{\infty}(\R^3)$ stated in the following Lemma.

\begin{lemma} \label{LemmaEmbedding}
For $s_1, s_2 >0$ with $s_1 < \frac{3}{2} < s_2$, the following continuous embeddings hold
\begin{align*}
\dot{H}^{s_1,1}_{\omega}(\R^3) \cap \dot{H}^{s_2,1}_{\omega}(\R^3) \hookrightarrow \dot{H}^{s_1}(\R^3) \cap \dot{H}^{s_2}(\R^3) \hookrightarrow L^{\infty}(\R^3).
\end{align*}
Furthermore, both spaces $\dot{H}^{s_1,1}_{\omega}(\R^3) \cap \dot{H}^{s_2,1}_{\omega}(\R^3)$ and $\dot{H}^{s_1}(\R^3) \cap \dot{H}^{s_2}(\R^3)$ are closed under multiplication. Moreover,
\begin{align} \label{Multi1}
\Vert fg \Vert_{\dot{H}^{s_1,1}_{\omega} \cap \dot{H}^{s_2,1}_{\omega}} \lesssim \Vert f \Vert_{\dot{H}^{s_1,1}_{\omega} \cap \dot{H}^{s_2,1}_{\omega}} \Vert g \Vert_{\dot{H}^{s_1,1}_{\omega} \cap \dot{H}^{s_2,1}_{\omega}},
\end{align}
for all $f,g \in \dot{H}^{s_1,1}_{\omega} \cap \dot{H}^{s_2,1}_{\omega}(\R^3)$ and 
\begin{align} \label{Multi2}
\Vert fg \Vert_{\dot{H}^{s_1} \cap \dot{H}^{s_2}} \lesssim \Vert f \Vert_{\dot{H}^{s_1} \cap \dot{H}^{s_2}} \Vert g \Vert_{\dot{H}^{s_1} \cap \dot{H}^{s_2}}, 
\end{align}
for all $f,g \in \dot{H}^{s_1} \cap \dot{H}^{s_2}(\R^3)$.
\end{lemma}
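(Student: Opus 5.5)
The first embedding is immediate from the definitions, since $\Vert f \Vert_{\dot{H}^{s_1}\cap\dot{H}^{s_2}} \le \Vert f \Vert_{\dot{H}^{s_1,1}_{\omega}\cap\dot{H}^{s_2,1}_{\omega}}$ for $f \in C^\infty_c(\R^3)$; by density this extends to the completions. For the second embedding we work with $f \in C^\infty_c(\R^3)$ and use Fourier inversion: $\Vert f\Vert_{L^\infty} \le (2\pi)^{-3/2}\Vert \hat f\Vert_{L^1}$. We split the integral into $|\xi|\le 1$ and $|\xi|>1$ and apply Cauchy--Schwarz on each piece:
\begin{align*}
\int_{|\xi|\le 1}|\hat f| \le \Big(\int_{|\xi|\le1}|\xi|^{-2s_1}\,d\xi\Big)^{1/2}\Vert f\Vert_{\dot H^{s_1}}, \qquad \int_{|\xi|> 1}|\hat f| \le \Big(\int_{|\xi|>1}|\xi|^{-2s_2}\,d\xi\Big)^{1/2}\Vert f\Vert_{\dot H^{s_2}}.
\end{align*}
Both weight integrals are finite precisely because $s_1<\frac32<s_2$. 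Hence $\Vert f\Vert_{L^\infty}\lesssim\Vert f\Vert_{\dot H^{s_1}\cap\dot H^{s_2}}$. By density, every element of the completion is the uniform limit of test functions, so it is identified with a bounded continuous function (indeed one vanishing at infinity).

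Next we prove the product estimate \eqref{Multi2} for $f,g\in C^\infty_c$. We use the fractional Leibniz (Kato--Ponce) inequality in homogeneous spaces, $\Vert fg\Vert_{\dot H^{\sigma}}\lesssim \Vert f\Vert_{L^\infty}\Vert g\Vert_{\dot H^\sigma}+\Vert f\Vert_{\dot H^\sigma}\Vert g\Vert_{L^\infty}$ for $\sigma\ge 0$. We apply it with $\sigma=s_1$ and with $\sigma=s_2$, and then bound the $L^\infty$ norms by the embedding just proved. If one prefers to avoid the homogeneous Kato--Ponce estimate, a direct Fourier proof also works. Write $\widehat{fg}=(2\pi)^{-3/2}\hat f*\hat g$ and use $|\xi|^{\sigma}\lesssim|\xi-\eta|^\sigma+|\eta|^\sigma$. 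Young's inequality $L^2*L^1\to L^2$ then gives $\Vert fg\Vert_{\dot H^\sigma}\lesssim \Vert f\Vert_{\dot H^\sigma}\Vert\hat g\Vert_{L^1}+\Vert\hat f\Vert_{L^1}\Vert g\Vert_{\dot H^\sigma}$. The $L^1$ norms of $\hat f,\hat g$ were bounded above by the intersection norm. This version is self-contained, so it is the route I would take.

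For the angular estimate \eqref{Multi1}, the key observation is that each $\Omega_{ij}$ is a first-order derivation, so $\Omega_{ij}(fg)=(\Omega_{ij}f)g+f(\Omega_{ij}g)$. Applying \eqref{Multi2} to each summand gives
\begin{align*}
\Vert\Omega_{ij}(fg)\Vert_{\dot H^{s_1}\cap\dot H^{s_2}}\lesssim\Vert\Omega_{ij}f\Vert_{\dot H^{s_1}\cap\dot H^{s_2}}\Vert g\Vert_{\dot H^{s_1}\cap\dot H^{s_2}}+\Vert f\Vert_{\dot H^{s_1}\cap\dot H^{s_2}}\Vert \Omega_{ij}g\Vert_{\dot H^{s_1}\cap\dot H^{s_2}}.
\end{align*}
Both terms on the right are bounded by the product of the angular norms. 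Adding the term $\Vert fg\Vert_{\dot H^{s_1}\cap\dot H^{s_2}}$, which is controlled by \eqref{Multi2} directly, yields \eqref{Multi1} on $C^\infty_c$. Finally we extend both estimates by density. If $f_n\to f$ and $g_n\to g$ in the respective norms, the bilinear bound shows $f_ng_n$ is Cauchy. Its limit coincides with the pointwise product $fg$, because the $L^\infty$ embedding gives uniform convergence. This proves that both spaces are closed under multiplication.

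The main obstacle is mild: it is the correct handling of the homogeneous norms at low frequencies. Only the intersection of the two norms controls $\Vert \hat f\Vert_{L^1}$, and the completion must consist of genuine functions rather than equivalence classes modulo polynomials. This is exactly what the condition $s_1<\frac32$ guarantees via the finiteness of $\int_{|\xi|\le1}|\xi|^{-2s_1}\,d\xi$. I would take care to state this identification explicitly when passing to limits.
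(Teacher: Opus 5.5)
Your proof is correct. Its skeleton coincides with the paper's: the trivial inclusion for the first embedding, an $L^\infty$ bound, the bilinear estimate \eqref{Multi2}, and then \eqref{Multi1} from $\Omega_{ij}(fg)=(\Omega_{ij}f)g+f\,\Omega_{ij}g$ together with \eqref{Multi2}.

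The difference is in how the two analytic inputs are obtained. The paper quotes the $L^\infty$ embedding from the literature. It then derives \eqref{Multi2} from the homogeneous Kato--Ponce inequality (Grafakos--Oh) combined with that embedding, which is exactly the first option you mention.

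Your preferred route instead shows that the intersection norm controls $\Vert \hat f\Vert_{L^1}$. You then bound $|\xi|^\sigma|\widehat{fg}|$ by $(|\cdot|^\sigma|\hat f|)*|\hat g|+|\hat f|*(|\cdot|^\sigma|\hat g|)$ and apply Young's inequality. This buys three things. The argument is elementary and self-contained, and it avoids a substantially deeper paraproduct-type theorem. It also shows that elements of the completion have $\hat f\in L^1$, so they are continuous and vanish at infinity. Finally, identifying the completion with a space of genuine functions becomes transparent, because $\hat f_n$ converges in $L^1$.

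The Kato--Ponce route only needs $L^\infty$ control of the factors, not $L^1$ control of their Fourier transforms. That makes it the one that carries over to $L^p$-based spaces or to settings where $\hat f\notin L^1$. For the present $L^2$-based intersection spaces, your Wiener-algebra argument is fully sufficient.

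One detail neither you nor the paper spells out is that the extended inclusion from the angular completion into $\dot H^{s_1}\cap\dot H^{s_2}$ is injective. This follows because $\Omega_{ij}$ is antisymmetric on test functions: if $f_n\to 0$ and $\Omega_{ij}f_n\to h$ uniformly, then $\int h\psi\,dx=0$ for every $\psi\in C^\infty_c(\R^3)$, which forces $h=0$.
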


\begin{proof}
The first embedding is the trivial inclusion of $\dot{H}^{s_1,1}_{\omega} (\R^3)\cap \dot{H}^{s_2,1}_{\omega}(\R^3)$ into $\dot{H}^{s_1}(\R^3) \cap \dot{H}^{s_2}(\R^3)$. The $L^{\infty}(\R^3)$ embedding is well-known, e.g.~see \cite{Glogi__2020}. Inequality \eqref{Multi2} follows from \cite{MR3200091}, Theorem 1, and $L^{\infty}(\R^3)$-embedding. To show \eqref{Multi1}, let $f,g \in \dot{H}^{s_1,1}_{\omega}(\R^3) \cap \dot{H}^{s_2,1}_{\omega}(\R^3)$, then we have
\begin{align*}
\Vert fg \Vert^2_{\dot{H}^{s_1,1}_{\omega} \cap \dot{H}^{s_2,1}_{\omega}} &= \Vert fg \Vert^2_{\dot{H}^{s_1} \cap \dot{H}^{s_2}} + \sum_{1 \leq i < j \leq 3} \Vert (\Omega_{ij} f)g + f(\Omega_{ij} g) \Vert^2_{\dot{H}^{s_1} \cap \dot{H}^{s_2}}\\
&\lesssim \Vert f \Vert^2_{\dot{H}^{s_1} \cap \dot{H}^{s_2}} \Vert g \Vert^2_{\dot{H}^{s_1} \cap \dot{H}^{s_2}} + \sum_{1 \leq i < j \leq 3} \Vert \Omega_{ij} f \Vert^2_{\dot{H}^{s_1} \cap \dot{H}^{s_2}} \Vert g \Vert^2_{\dot{H}^{s_1} \cap \dot{H}^{s_2}}\\
&~+ \sum_{1 \leq i < j \leq 3} \Vert f \Vert^2_{\dot{H}^{s_1} \cap \dot{H}^{s_2}} \Vert \Omega_{ij} g \Vert^2_{\dot{H}^{s_1} \cap \dot{H}^{s_2}}\\
&\lesssim \Vert f \Vert^2_{\dot{H}^{s_1,1}_{\omega} \cap \dot{H}^{s_2,1}_{\omega}} \Vert g \Vert^2_{\dot{H}^{s_1,1}_{\omega} \cap \dot{H}^{s_2,1}_{\omega}},
\end{align*}
as a direct consequence of \eqref{Multi2}.
\end{proof}

Next, we define the main function space of this paper. 
\begin{definition}\label{Def:Xsk}
Let 
\begin{align*} 
s = \frac{3}{2} - \frac{2}{p-1} + \varepsilon \quad \text{with} \quad 0 < \varepsilon < \frac{4}{(p-1)(3p-1)} \quad \text{and} \quad k = \begin{cases} 2, \ \text{if $p=3$,} \\ 4, \ \text{if $p \geq 5$}. \end{cases}
\end{align*}
We define 
\[ X_{s,k}^{\omega} := \dot{H}^{s,1}_{\omega}(\R^3) \cap \dot{H}^{k,1}_{\omega}(\R^3). \]
\end{definition}

\begin{remark}
The choice of exponents $(s,k)$ in Definition \ref{Def:Xsk} allows in particular to prove local Lipschitz continuity of the nonlinearity. It turns out that for $p=3$ it suffices to choose $k=2$ as a consequence of $s < 1$.
\end{remark}

\section{Linear Operator and Semigroup theory}

In this section, we investigate the linear operator $L$. To analyse the spectrum of $L$, we will work in a weighted $L^2$-space in which the operator becomes essentially self-adjoint. This gives us the necessary information required to analyze the evolution governed by $L$ on $X_{s,k}^{\omega}$. We note that we start with complex function spaces to provide the natural framework for spectral theory. In Section \ref{Sec:NonlinearEvol}, we then restrict ourselves to real-valued functions.

\subsection{Linear operator in a weighted $L^2$-space} \label{LINEAROPERATOR}

We introduce the weighted $L^2$-space $L^2_{\sigma}(\R^3)$ with corresponding inner product
\begin{align*}
\langle f, g \rangle_{L^2_{\sigma}(\R^3)} := \int_{\R^3} f(x) \overline{g(x)} \sigma(x) dx
\end{align*}
and weight $\sigma(x) = e^{-\frac{|x|^2}{4}}$.  

We equip the operator $L$,

\[Lf= \Delta f - \Lambda f + V f\]
 with the domain $\mathcal{D}(L) := C^{\infty}_c(\R^3)$ which turns it into an unbounded, densely defined and symmetric operator on $L^2_{\sigma}(\R^3)$. In order to analyze its spectrum, we first consider the following decomposition. We define the unitary map $U: L^2(\R^3) \rightarrow L^2_{\sigma}(\R^3)$ given by
\begin{align*}
[Uf](x) = e^{\frac{|x|^2}{8}} f(x),
\end{align*}
and consider the unitary equivalent operator $A = -U^{-1} L U$ with domain $\mathcal{D}(A) = U^{-1} \mathcal{D}(L) = C^{\infty}_c(\R^3)$. For $f \in C^{\infty}_{c}(\R^3)$ this operator is given by
\begin{align*}
[Af](x) = -\Delta f(x) +\left( \frac{|x|^2}{16} - \frac{3}{4} + \frac{1}{p-1} - V(|x|) \right) f(x).
\end{align*}
This operator is essentially self-adjoint with closure $\mathcal{A} : \mathcal{D}(\mathcal{A}) \subseteq L^2(\R^3) \rightarrow L^2(\R^3)$. To see this we write $A = - \Delta +\frac{| \cdot |^2}{16} + Q$, where $Q(x) = \frac{1}{p-1} - \frac{3}{4}-V(|x|)$. By Theorem X.28 on p.184 in \cite{ReedSimon} it follows that $-\Delta + \frac{| \cdot |^2}{16}$ is essentially self-adjoint on $C^{\infty}_c(\R^3)$. Since $Q \in L^{\infty}(\R^3)$ as an operator on $L^2(\R^3)$ is $\left(- \Delta +\frac{| \cdot |^2}{16} \right)$-bounded with relative bound $0$, the Kato-Rellich Theorem  (\cite{ReedSimon}, Theorem X.12 on p.162) implies that $A$ is essentially self-adjoint on $C^{\infty}_c(\R^3)$. To get more information on the spectrum of $L$ we decompose the operator $\mathcal{A}$ into one-dimensional radial Schr\"odinger operators. For that we first decompose the Hilbert space 
\begin{align*}
L^2(\R^3) = \bigoplus_{\ell \geq 0} L^2([0,\infty),r^2 dr) \otimes \mathcal{H}_{\ell} = \bigoplus_{\ell \geq 0} \bigoplus_{m=1}^{d(\ell)} L^2([0,\infty),r^2 dr) \otimes \mathcal{H}_{\ell}^m,
\end{align*} 
where $\mathcal{H}_{\ell} = \text{span} \{ Y_{\ell 1},...,Y_{\ell d(\ell)} \}$ and $\mathcal{H}_{\ell}^m = \{Y_{\ell m} \}$ and define the corresponding projections on the orthogonal subspaces as 
\begin{align*}
P_{\ell m} : L^2(\R^3) \rightarrow L^2([0,\infty),r^2 dr) \otimes \mathcal{H}_{\ell}^m, \quad [P_{\ell m} f](r \omega) = \langle f(r \cdot), Y_{\ell m} \rangle_{S^2} Y_{\ell m}(\omega),
\end{align*} 
where $r \omega = x \in \R^3 \setminus \{ 0 \}$ with $r = |x|$ and $\omega = \frac{x}{|x|}$. This decomposition reduces the operator $\mathcal{A}$ into a family of operators. More precisely,
\begin{align*}
\mathcal{A} = \bigoplus_{\ell \geq 0} \bigoplus_{m=1}^{d(l)} \mathcal{A}_{\ell m}, \quad \mathcal{A}_{\ell m}f = \mathcal{A} f \quad \text{for $f \in \mathcal{D}(\mathcal{A}_{\ell m}) = P_{\ell m} \mathcal{D(\mathcal{A})}$}.
\end{align*}
Each operator $\mathcal{A}_{\ell m}$ in polar coordinates is given by
\begin{align*}
[\mathcal{A}_{\ell m} f](r \omega) = \left( - \Delta_r + \frac{r^2}{16} - \frac{3}{4} + \frac{1}{p-1} - V(r) + \frac{\ell ( \ell +1)}{r^2} \right) f_{\ell m}(r) Y_{\ell m}(\omega),
\end{align*}
for $f \in P_{\ell m} C^{\infty}_c(\R^3)$, where $f(r \omega) = f_{\ell m}(r) Y_{\ell m}(\omega)$ with $f_{\ell m}(r) := \langle f(r \cdot), Y_{\ell m} \rangle_{S^2}$ and $ \Delta_r $ denotes the radial Laplace operator.\\
Since each operator is of the form $\mathcal{A}_{\ell m} = \mathcal{A}_{\ell m}^r \otimes Id_{\mathcal{H}_{\ell m}}$, it suffices to investigate the radial operators $\mathcal{A}_{\ell m}^r$ with domain $\mathcal{D}(\mathcal{A}_{\ell m}^r) = P_{\ell m}^r \mathcal{D}(\mathcal{A}) \subseteq L^2([0, \infty), r^2 dr)$, where $P_{\ell m}^r f = f_{\ell m}$ and $\mathcal{A}_{\ell m}^r f = \mathcal{A}_{\ell m} f_{\ell m}$.
We define the unitary map $\mathcal V: L^2(\R^+) \rightarrow L^2_r(\R^3)$ by
\begin{align*}
[\mathcal Vu](|x|) = |S^2|^{-\frac{1}{2}} |x|^{-1} u(|x|).
\end{align*}
By this, each $\mathcal{A}_{\ell m}^r$ is unitary equivalent to a one-dimensional Schr\"odinger operator $\mathcal{B}_{\ell m} = \mathcal V^{-1} \mathcal{A}_{\ell m}^r \mathcal V$ with domain $\mathcal{D} (\mathcal{B}_{\ell m}) = \mathcal V^{-1} \mathcal{D} (\mathcal{A}_{\ell m}^r) \subseteq L^2(\R^+)$ given by
\begin{align}\label{Bl}
[\mathcal{B}_{\ell m} u](r) = -u^{\prime \prime} (r) + q_{\ell}(r) u(r), \quad q_{\ell}(r) := \frac{r^2}{16} - \frac{3}{4} + \frac{1}{p-1} -V(r) + \frac{\ell (\ell +1)}{r^2},
\end{align}
for $u \in \mathcal{V}^{-1} P_{\ell m} C^{\infty}_c(\R^3)$. Let $\ell \geq 1$, then for each $\ell, m$ the operator is  limit-point at both endpoints of the interval $(0, \infty)$. Let $B_{\ell m,c}$ be the restriction of $\mathcal{B}_{\ell m}$ to $C^{\infty}_c(\R^+)$, then this operator is essentially self-adjoint, see \cite{ReedSimon}, Theorem X.7 on p.152. Since $B_{\ell m,c} \subseteq \mathcal{B}_{\ell m}$, the closure is necessarily given by the maximal operator $\mathcal{B}_{\ell m} : \mathcal{D}(\mathcal{B}_{\ell m}) \subseteq L^2(\R^+) \rightarrow L^2(\R^+)$ with domain
\begin{align*}
\mathcal{D}(\mathcal{B}_{\ell m}) = \{ u \in L^2(\R^+) : u, u^{\prime} \in AC_{\text{loc}}(\R^+), \mathcal{B}_{\ell m} u \in L^2(\R^+) \}.
\end{align*}
Furthermore, each $\mathcal{B}_{\ell m}$ has compact resolvent.\\
\\
For $\ell = 0$, the corresponding operator $\mathcal{B}_0:= {\mathcal{B}}_{0 m}$  is regular, limit circle at zero and limit point at infinity. Since $\mathcal{D}(\mathcal{B}_0) = \mathcal{V}^{-1} \mathcal{D}(\mathcal{A_0})$, where $\mathcal{A}_0 := \mathcal{A}_{0 m}$, the domain of $\mathcal{B}_0$ is given by
\begin{align*}
\mathcal{D}(\mathcal{B}_0) = \{ u \in L^2(\R^+) : u, u^{\prime} \in AC_{\text{loc}}(\R^+), \mathcal{B}_0 u \in L^2(\R^+), \lim_{r \rightarrow 0^+} u(r) = 0 \},
\end{align*} 
see \cite{MR2499016}, Theorem 9.6 on p.187. Furthermore, $\mathcal{B}_{0}$ has compact resolvent.

In summary, by Theorem 2.23. in \cite{MR2499016}, we find that the spectrum of $\mathcal{A}$ satisfies
\begin{align*}
\sigma(\mathcal{A}) = \bigcup_{\ell, m} \sigma(\mathcal{A}_{\ell m}),
\end{align*}
and $\mathcal{A}$ has compact resolvent. By the unitary equivalence of $L$ and $A$ we can state the following result.

\begin{lemma} \label{LemmaOperatorL}
The operator $L: \mc D(L) \subset L^2_{\sigma}(\R^3) \to L^2_{\sigma}(\R^3)$ is closable with the closure $\mathcal{L} : \mathcal{D}(\mathcal{L}) \subseteq L^2_{\sigma}(\R^3) \rightarrow L^2_{\sigma}(\R^3)$ being self-adjoint and having compact resolvent. In particular,  the spectrum $\sigma \left( \mathcal{L} \right)$ consists only of real eigenvalues with finite multiplicities. 
\end{lemma}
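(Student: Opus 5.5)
The plan is to transfer everything from $\mathcal{A}$ to $L$ through the unitary map $U$. By construction $L = -U A U^{-1}$ on $\mathcal{D}(L) = U\,C^\infty_c(\R^3) = C^\infty_c(\R^3)$; note that $U$ maps $C^\infty_c(\R^3)$ bijectively onto itself, since multiplication by $e^{|x|^2/8}$ preserves smoothness and support. So the whole statement reduces to facts about $\mathcal{A}$ already established above: $A$ is essentially self-adjoint on $C^\infty_c(\R^3)$ (via Theorem X.28 and Kato--Rellich), and its closure $\mathcal{A}$ has compact resolvent (from the decomposition into the $\mathcal{B}_{\ell m}$ and Theorem 2.23 of \cite{MR2499016}).

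\emph{Closability and self-adjointness.} Since $U$ is unitary, it is an isometric isomorphism of graph spaces: the graph of $L$ is the image of the graph of $-A$ under the unitary map $U\oplus U$ on $L^2(\R^3)\oplus L^2(\R^3) \to L^2_\sigma(\R^3)\oplus L^2_\sigma(\R^3)$. Closures of graphs are therefore mapped to closures. Hence $L$ is closable with
\begin{align*}
\mathcal{L} = -U\mathcal{A}U^{-1}, \qquad \mathcal{D}(\mathcal{L}) = U\mathcal{D}(\mathcal{A}).
\end{align*}
Adjoints also commute with unitary conjugation, $(U\mathcal{A}U^{-1})^* = U\mathcal{A}^*U^{-1}$. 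Since $\mathcal{A}$ is self-adjoint, so is $\mathcal{L}$.

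\emph{Compact resolvent and spectrum.} For $\lambda\in\rho(\mathcal{L})$ we have $-\lambda\in\rho(\mathcal{A})$ and
\begin{align*}
R_{\mathcal{L}}(\lambda) = -U R_{\mathcal{A}}(-\lambda) U^{-1},
\end{align*}
which is compact as a product of bounded operators with a compact one. In particular $\sigma(\mathcal{L}) = -\sigma(\mathcal{A})$. Because $\mathcal{A}$ is self-adjoint, its spectrum is real. Because its resolvent is compact, the Riesz--Schauder theory says the spectrum consists only of isolated eigenvalues of finite multiplicity. Both properties transfer to $\mathcal{L}$.

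\emph{Main obstacle.} The only real content lies in the previously established facts about $\mathcal{A}$, specifically the compactness of its resolvent, which relies on the confining term $r^2/16$ and the boundedness of $V$. Given those, the lemma is a routine transfer through the unitary equivalence, and the only care needed is in identifying the domains correctly.
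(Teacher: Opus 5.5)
Your proposal is correct and follows the paper's route: the paper's proof consists of the remark that the lemma ``follows immediately from the above considerations,'' namely transferring the essential self-adjointness and compact resolvent of $\mathcal{A}$ (obtained via Kato--Rellich and the partial-wave decomposition) to $L$ through the unitary $U$. You simply make explicit the routine transfer steps (graphs, adjoints, $R_{\mathcal{L}}(\lambda) = -U R_{\mathcal{A}}(-\lambda) U^{-1}$) that the paper leaves implicit.
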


\begin{proof}
This follows immediately from the above considerations.
\end{proof}

\begin{remark}\label{Rem:Spec}
Using the above decomposition of $\mc L$ one can analyze its spectrum in more detail. We recall that the potential $V$ implicitly depends on the parameter $c \in (0,\frac{d}{p^2})$. As outlined in Remark \ref{RemSpec}, we expect a different qualitative picture for different values of $c$. However, for every $c$ within the admissible range, we have $1 \in \sigma \left( \mathcal{L} \right)$ and this eigenvalue is induced by the time-translation symmetry of the problem. More precisely, 
\begin{align*}
\partial_T u_T(t,x) = C(T,t) (b+|x|^2)^{-\frac{p}{p-1}},
\end{align*}
for some function $C(T,t)$ depending on $T$ and $t$, and it is easy to check that $g(x) := (b +|x|^2)^{-\frac{p}{p-1}}$ satisfies $Lg=g$. Furthermore, $g \in X_{s,k}^{\omega}$ since it is radial and decays sufficiently fast at infinity. 
\end{remark}

In the following, we show that for $p=3$ and large enough values of $c$, $\la=1$ is the only unstable eigenvalue, and we refer to Appendix \ref{Discussion} for a discussion of the more general case.

\begin{proposition}\label{Prop:p3Spectrum}
Let $p=3$ and $c \in (\frac{1}{4}, \frac{1}{3})$, then the spectrum of $\mathcal{L}$ satisfies
\begin{align} \label{SpectrumofL}
\sigma(\mathcal{L}) \subset (- \infty, 0) \cup \{ 1\},
\end{align}
where the eigenspace corresponding to $\lambda =1$ is one-dimensional and spanned by $g(x) = (b+|x|^2)^{-\frac{p}{p-1}}$.
\end{proposition}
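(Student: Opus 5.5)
The plan is to reduce \eqref{SpectrumofL} to positivity statements for the one-dimensional operators $\mathcal{B}_{\ell m}$ from \eqref{Bl}. Since $\mathcal{L}=-U\mathcal{A}U^{-1}$, we have $\lambda\in\sigma(\mathcal{L})$ if and only if $-\lambda\in\sigma(\mathcal{A})=\bigcup_{\ell,m}\sigma(\mathcal{B}_{\ell m})$. By Lemma \ref{LemmaOperatorL} all spectra consist of real eigenvalues of finite multiplicity. So it suffices to show two things:
\begin{itemize}
\item[(i)] $\mathcal{B}_0$ has the simple eigenvalue $-1$ and all its other eigenvalues are strictly positive;
\item[(ii)] $\mathcal{B}_{\ell m}>0$ for all $\ell\ge 1$.
\end{itemize}

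First I make the potential explicit for $p=3$. Set $\alpha:=\sqrt{3/c}\in(3,2\sqrt3)$. Then $a=\alpha$, $b=2(\alpha-3)\in(0,4\sqrt3-6)$, $\phi^2=\alpha/(b+r^2)$ and $c\,a^2=3$. Hence
\[ V(r)=\frac{3\alpha}{b+r^2}-\frac{15r^2}{(b+r^2)^2},\qquad q_\ell(r)=\frac{r^2}{16}-\frac14-V(r)+\frac{\ell(\ell+1)}{r^2}. \]
All the inequalities below become, after multiplying by $16r^2(b+r^2)^2$ (or $(b+r^2)^2$ when $\ell=0$), polynomial inequalities in $(x,\alpha)$ with $x=r^2\ge0$ and $\alpha$ in the compact interval $[3,2\sqrt3]$.

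For (i), I transport $g$ from Remark \ref{Rem:Spec}: the function $u_0(r)=r\,e^{-r^2/8}(b+r^2)^{-3/2}$ lies in $\mathcal{D}(\mathcal{B}_0)$ and satisfies $\mathcal{B}_0u_0=-u_0$. Since $u_0>0$ on $(0,\infty)$, Sturm oscillation theory (the $\ell=0$ problem is regular at $0$ and limit point at $\infty$) shows that $-1$ is the lowest eigenvalue and that it is simple, with eigenspace spanned by $u_0$, i.e.\ by $g$. For the rest of the spectrum I use supersymmetry. Write $\mathcal{B}_0+1=D^*D$ with $D=\partial_r-u_0'/u_0$. The partner operator $DD^*-1=-\partial_r^2+\tilde q_0$ has the potential $\tilde q_0=q_0-2(\log u_0)''$. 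A direct computation gives
\[ \tilde q_0(r)=\frac{r^2}{16}+\frac14+\frac{2}{r^2}-V(r)+\frac{6(b-r^2)}{(b+r^2)^2}. \]
The spectrum of $\mathcal{B}_0$ without $-1$ coincides with the spectrum of this partner operator, which has a $2/r^2$ singularity at $0$ and is therefore limit point there. Consequently it suffices to prove $\tilde q_0>0$ pointwise on $(0,\infty)$; this yields strictly positive eigenvalues.

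For (ii), note that $q_\ell\ge q_1$ for $\ell\ge1$. So it is enough to show $q_1(r)=\frac{r^2}{16}-\frac14+\frac{2}{r^2}-V(r)>0$ on $(0,\infty)$ for $\alpha\in(3,2\sqrt3)$, which gives $\mathcal{B}_{\ell m}>0$ for all $\ell\ge1$ by the quadratic form. If $q_1$ turns out not to be positive on the whole range, my fallback is to compare with the supersymmetric partner above: the $2/r^2$ barrier is the same, and $\tilde q_0$ differs from $q_1$ only by the explicit term $\frac12+\frac{6(b-r^2)}{(b+r^2)^2}$. I would also check whether the assumption $c>\frac14$ is exactly what makes the critical polynomial nonnegative.

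The main obstacle is the uniform-in-$c$ verification of the two polynomial inequalities $16x(b+x)^2\tilde q_0>0$ and $16x(b+x)^2q_1>0$ on $x>0$, $\alpha\in(3,2\sqrt3)$. The negative part of $V$ is largest near $x\sim b$, where $b$ is small, so the $2/x$ term must dominate there. My approach is to expand in powers of $x$ with coefficients polynomial in $\alpha$ (or in $b$), and then group the terms into manifestly nonnegative blocks using AM--GM between the $x^{-1}$ and $x$ contributions. If that fails, I would use Sturm sequences or a discriminant in $x$ and check the sign on the compact $\alpha$-interval, including its endpoints.
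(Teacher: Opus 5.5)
Your proposal is correct and follows the paper's proof: pointwise positivity of $q_\ell$ (reduced to $q_1$) for $\ell\ge 1$, and for $\ell=0$ a supersymmetric factorization through the ground state $r e^{-r^2/8}(b+r^2)^{-3/2}$. Your partner potential $\tilde q_0$ simplifies exactly to the paper's $q_S$ at $p=3$; both equal $\frac{r^2}{16}+\frac14+\frac{2}{r^2}-\frac{3b(2+b+r^2)}{2(b+r^2)^2}$. The remaining differences are cosmetic: you get simplicity of $\lambda=-1$ from positivity of the ground state rather than from the $r$ versus $1$ behaviour of solutions at $r=0$. Also, $q_1>0$ does hold on all of $c\in(\frac14,\frac13)$ (tightest near $c=\frac14$), so your fallback is never needed.
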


\begin{proof}
Via unitary equivalence it suffices to investigate the spectrum of the operators $\mathcal{B}_{\ell m}$ defined above. For $\ell \geq 1$, $m \in \{1,...,d(\ell)\}$, one can easily check that the potentials $q_{\ell}$ are strictly positive for $c \in (\frac{1}{4}, \frac{1}{3})$. Since each operator has compact resolvent, each corresponding spectrum consists only of eigenvalues with finite multiplicities. Furthermore, we have
\begin{align*}
\inf_{\substack{u \in \mathcal{D}(\mathcal{B}_{\ell m}) \\ \Vert u \Vert_{L^2(\R^+)} =1 }} \langle \mathcal{B}_{\ell m} u , u \rangle_{L^2(\R^+)} = \inf \sigma(\mathcal{B}_{\ell m}). 
\end{align*}
Let $u \in \mathcal{D}(\mathcal{B}_{\ell m})$ with $\Vert u \Vert_{L^2(\R^+)} =1$, then by regularity of $u$ and partial integration we find
\begin{align*}
\langle \mathcal{B}_{\ell m} u , u \rangle_{L^2(\R^+)} = \int_0^{\infty} u^{\prime}(r)^2 + q_{\ell}(r) u(r)^2 dr \geq \inf_{r \geq 0} q_{\ell}(r),
\end{align*}
which implies $\inf \sigma(\mathcal{B}_{\ell m}) \geq \inf_{r \geq 0} q_{\ell}(r) > 0$. \\
Now we consider the case $\ell = 0$. Explicit computation shows that $\lambda = -1$ is an eigenvalue of $\mathcal{B}_0$ with eigenfunction $\tilde g(r) = e^{-r^2/8} r (b+r^2)^{-\frac{p}{p-1}} \in \mathcal{D}(\mathcal{B}_0)$.  To exclude other unstable eigenvalues, we use supersymmetry  and factorize $\mathcal{B}_0 = B_{+} B_{-} -1$, where 
\begin{align*}
B_{+} = \left( -\frac{d}{dr} - \frac{\tilde{g}^{\prime}}{\tilde{g}} \right) \quad \text{and} \quad B_{-} = \left( \frac{d}{dr} - \frac{\tilde{g}^{\prime}}{\tilde{g}} \right).
\end{align*}
This leads to the operator $B_S := B_{-} B_{+}-1$, being essentially self-adjoint on $C_c^{\infty}(\R^+) \subseteq L^2(\R^+)$, with the closure $\mathcal{B}_S : \mathcal{D}(\mathcal{B}_S) \subseteq L^2(\R^+) \rightarrow L^2(\R^+)$  being isospectral (modulo $\lambda = -1$) to $\mathcal{B}_0$. For a function $u \in C^{\infty}_c(\R^+)$ the operator $\mathcal{B}_S$ is explicitly given by
\begin{align} \label{BS}
[\mathcal{B}_S u](r) = -u^{\prime \prime}(r) + q_S(r) u(r),
\end{align}
where
\begin{align}\label{qS}
q_S(r) = \frac{r^2}{16}+\frac{2}{r^2}-\frac{5}{4} +\frac{p(r^2-2)}{(p-1)(b+r^2)} + \frac{4pr^2}{(p-1)^2(b+r^2)^2}
\end{align} 
and $b = b(c)$ as defined in \eqref{Constants}. Now, one can check that $q_S > 0$ for $c \in (\frac{1}{4}, \frac{1}{3})$, which implies $\inf \sigma(\mathcal{B}_{S}) > 0$ as above.\\
To see that the eigenspace corresponding to $\lambda = -1$ is indeed one-dimensional, we consider a fundamental system $\{ w_0, w_1 \}$ of solutions to $\mathcal{B}_0 u = - u$. Since the operator is in the limit-point case at $\infty$ it suffices to investigate the behaviour of a solution at $r=0$. A power series approach implies $w_0 \sim r$ and $w_1 \sim1$ near zero, hence only the linear independent solution $w_0$ belongs to $\mathcal{D}(\mathcal{B}_0)$. This shows that every eigenfunction of $\mathcal{B}_0$ has to be a multiple of $\tilde{g}$.
\end{proof}

\subsection{Semigroup theory in $L^2_{\sigma}(\R^3)$}

First, we write 
\[ \mathcal{L} = \mathcal{L}_0 + \mc L_1,\]
where $\mc L_1: L^2_{\sigma}(\R^3) \to L^2_{\sigma}(\R^3)$ is the bounded linear operator induced by the potential term, i.e., $\mc L_1 f = V f$, see Eq.~\eqref{Def:L0L1}. In particular, $\mc D(\mathcal{L}_0)= \mc D(\mathcal{L})$. For $\tau \geq 0$, we define on $L^2_{\sigma}(\R^3)$ the family of operators 
\begin{align*}
[S_0(\tau) f](x) = e^{-\frac{1}{p-1} \tau} \left( G_{\alpha(\tau)} \ast f \right)(e^{-\frac{\tau}{2}} x), \quad \text{for $x \in \R^3$ and $f \in L^2_{\sigma}(\R^3)$},
\end{align*}
where $G_{\alpha(\tau)}(x) = (4 \pi \alpha(\tau))^{-\frac{3}{2}} e^{-\frac{|x|^2}{4 \alpha(\tau)}}$ with $\alpha(\tau) = 1-e^{-\tau}$. It is well known and easy to check  that $\left(S_0(\tau) \right)_{\tau \geq 0}$ defines a strongly continuous one-parameter semigroup on $L^2_{\sigma}(\R^3)$. Furthermore, for $f \in C^{\infty}_c(\R^3)$ one has $\frac{d}{d \tau} S_0(\tau) f = S_0(\tau) \mathcal{L}_0 f$, which shows that $\mathcal{L}_0$ is the generator of $\left(S_0(\tau) \right)_{\tau \geq 0}$. 

\begin{lemma}
The operator  $(\mathcal{L}, \mathcal{D}(\mathcal{L}))$ generates a strongly continuous semigroup $\left(S(\tau) \right)_{\tau \geq 0}$ of bounded linear operators on $L^2_{\sigma}(\R^3)$.
\end{lemma}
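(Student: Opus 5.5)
The plan is to apply the bounded perturbation theorem for $C_0$-semigroups (e.g.\ Engel--Nagel, Theorem III.1.3). We already know that $\mathcal{L}_0$, the closure of $L_0$ on $C^\infty_c(\R^3)$, generates the strongly continuous semigroup $(S_0(\tau))_{\tau\ge0}$ on $L^2_\sigma(\R^3)$. It therefore suffices to show two things. First, $\mathcal{L}_1 f = Vf$ is bounded on $L^2_\sigma(\R^3)$. Second, the closure $\mathcal{L}$ of $L$ coincides with the operator sum $\mathcal{L}_0+\mathcal{L}_1$ on $\mathcal{D}(\mathcal{L}_0)$. The theorem then gives that $\mathcal{L}_0+\mathcal{L}_1$ generates a $C_0$-semigroup $(S(\tau))_{\tau\geq 0}$ with $\|S(\tau)\|\le M e^{(\omega+M\|\mathcal{L}_1\|)\tau}$.

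\emph{Boundedness of $\mathcal{L}_1$.} Since $\phi(r)=(a/(b+r^2))^{1/(p-1)}$ with $b>0$ (by $c<p/9$), we have
\[ V(y)=\frac{pa}{b+|y|^2}-\frac{c(2p-1)a^2|y|^2}{(b+|y|^2)^2}. \]
This function is continuous and decays at infinity, so $V\in L^\infty(\R^3)$. Hence $\|Vf\|_{L^2_\sigma}\le\|V\|_{L^\infty}\|f\|_{L^2_\sigma}$.

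\emph{Identification of the generator.} Since $L_1$ is bounded, a sequence $f_n\in C^\infty_c(\R^3)$ has $(f_n, L f_n)$ convergent in the graph norm if and only if $(f_n, L_0 f_n)$ converges. So the closure of $L=L_0+L_1$ on $C^\infty_c$ has domain equal to the closure domain of $L_0$, and $\mathcal{L}=\overline{L_0}+\mathcal{L}_1$.

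The point requiring care is that the generator of $S_0$ is exactly $\overline{L_0}$ with domain the closure of $C^\infty_c(\R^3)$ in the graph norm. We know $\mathcal{L}_0$ extends $L_0$ on $C^\infty_c$. To show $C^\infty_c(\R^3)$ is a core, I would either:
\begin{enumerate}[(i)]
\item check that $C^\infty_c$ is dense and that $S_0(\tau)$ maps $\mathcal{S}(\R^3)$ into itself, which follows from the explicit Gaussian convolution formula. Then $\mathcal{S}$ is a core by the core theorem (Engel--Nagel II.1.7), and one approximates Schwartz functions by $C^\infty_c$ cutoffs in the graph norm, using the Gaussian weight to control the term $y\cdot\nabla$; or
\item use the unitary equivalence with $A$ established above: $-U^{-1}L_0U=-\Delta+\frac{|x|^2}{16}+\text{const}$ is essentially self-adjoint on $C^\infty_c$ and bounded below. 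So $\overline{L_0}$ is self-adjoint and bounded above, and it generates a semigroup that agrees with $S_0$ on $C^\infty_c$ by the stated identity $\frac{d}{d\tau}S_0(\tau)f=S_0(\tau)\mathcal{L}_0 f$.
\end{enumerate}
Option (ii) is the quickest. In fact it directly yields the whole lemma, since Lemma \ref{LemmaOperatorL} shows $\mathcal{L}$ is self-adjoint and bounded above by $\sup V + \text{const}$ (via $A\ge -\|V\|_\infty + \text{const}$). A self-adjoint operator bounded above generates a $C_0$-semigroup $e^{\tau\mathcal{L}}$ by the spectral theorem (or Lumer--Phillips). I would present this as the main argument and use the perturbation formulation only to record the decomposition $\mathcal{L}=\mathcal{L}_0+\mathcal{L}_1$. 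The only obstacle is confirming the lower bound on $A$, which holds because $|x|^2/16\ge0$ and $Q\in L^\infty$.
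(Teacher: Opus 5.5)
Your proposal is correct, but the argument you choose to present as the main one differs from the paper's. The paper's proof is a one-line appeal to the Bounded Perturbation Theorem (Engel--Nagel, III.1.3) for $\mathcal{L}=\mathcal{L}_0+\mathcal{L}_1$. There $\mathcal{L}_0$ is taken to be the generator of the explicit Gaussian semigroup $S_0(\tau)$, and $\mathcal{L}_1 f=Vf$ is bounded. You instead read the lemma off Lemma~\ref{LemmaOperatorL}: $\mathcal{L}=-U\mathcal{A}U^{-1}$ is self-adjoint and bounded above, since $\mathcal{A}\geq \frac{1}{p-1}-\frac{3}{4}-\sup V$, so $e^{\tau\mathcal{L}}$ is a $C_0$-semigroup by the spectral theorem.

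Your route needs no information about $S_0$ and gives more: analyticity, and the sharp bound $\|S(\tau)\|\leq e^{\tau\max\sigma(\mathcal{L})}$ on $L^2_\sigma(\R^3)$. The paper's route produces $S(\tau)$ together with its variation-of-parameters link to $S_0(\tau)$. That link is what is reused later, in the invariance of $X^{\omega}_{s,k}$ under $S_0$, the bounded perturbation inside $X^{\omega}_{s,k}$, and Lemma~\ref{Lemmacompactperturbation}, where no spectral theorem is available. To use your semigroup there, you must identify it with the perturbative one. This follows from uniqueness of the semigroup generated by $\mathcal{L}$, once $\mathcal{L}_0$ is known to generate $S_0$. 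So the decomposition you record is genuinely needed downstream, not merely cosmetic.

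You are also right that this generator identification is the delicate point, and the paper asserts it without the maximality step. The derivative identity on $C_c^\infty(\R^3)$ only shows that the generator $G$ of $S_0$ extends $\overline{L_0}$. Your option (ii) closes this gap, but the phrase ``agrees with $S_0$ on $C_c^\infty$'' is not the right mechanism. Argue instead as follows: $\overline{L_0}$ is self-adjoint and bounded above, so $\lambda-\overline{L_0}$ is surjective for large real $\lambda$. Since $\lambda-G$ is injective for such $\lambda$, this forces $G=\overline{L_0}$.
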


\begin{proof}
The claim follows from the Bounded Perturbation Theorem, see Chapter III, Theorem 1.3 in \cite{MR1721989}
\end{proof}

\subsection{The linear evolution in $X_{s,k}^{\omega}$} To connect the linear theory in $L^2_{\sigma}(\R^3)$ to the space $X_{s,k}^{\omega}$ we use the following lemma.

\begin{lemma} \label{Embedding2}
The space $X_{s,k}^{\omega}$ is continuously embedded into $L^2_{\sigma}(\R^3)$.
\end{lemma}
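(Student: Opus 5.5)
It suffices to show $\Vert f \Vert_{L^2_{\sigma}(\R^3)} \lesssim \Vert f \Vert_{\dot{H}^s \cap \dot{H}^k}$ for all $f \in C^{\infty}_c(\R^3)$. Indeed, Lemma \ref{LemmaEmbedding} gives $X_{s,k}^{\omega} \hookrightarrow \dot{H}^s(\R^3) \cap \dot{H}^k(\R^3)$, and the estimate then extends by density to the completion. Once it holds, a Cauchy sequence in $X_{s,k}^{\omega}$ of test functions is Cauchy in $L^2_{\sigma}(\R^3)$. Its $L^2_\sigma$-limit coincides with the continuous representative provided by the $L^\infty$-embedding, because both convergences imply convergence a.e. along a subsequence. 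Hence the induced map is injective and continuous.

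For the estimate itself I would simply use the $L^\infty$ bound, since the Gaussian weight is integrable. For $f \in C^{\infty}_c(\R^3)$,
\begin{align*}
\Vert f \Vert_{L^2_{\sigma}(\R^3)}^2 = \int_{\R^3} |f(x)|^2 e^{-\frac{|x|^2}{4}} dx \leq \Vert f \Vert_{L^{\infty}(\R^3)}^2 \int_{\R^3} e^{-\frac{|x|^2}{4}} dx \lesssim \Vert f \Vert_{L^{\infty}(\R^3)}^2 .
\end{align*}
Next I apply the embedding $\dot{H}^{s}(\R^3) \cap \dot{H}^{k}(\R^3) \hookrightarrow L^{\infty}(\R^3)$ from Lemma \ref{LemmaEmbedding}. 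This is admissible because $s < \frac{3}{2} < k$ by Definition \ref{Def:Xsk}; note $s>0$ as well, since $\varepsilon>0$ and $\frac{3}{2}-\frac{2}{p-1} \geq \frac12$ for $p\geq 3$. The result is $\Vert f \Vert_{L^2_\sigma} \lesssim \Vert f \Vert_{\dot{H}^s\cap \dot{H}^k} \leq \Vert f \Vert_{X^\omega_{s,k}}$.

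The only point requiring care is the identification of elements of the abstract completion with genuine functions, so that the map into $L^2_\sigma$ is a true embedding and not merely a bounded map on equivalence classes. As described above, I handle this using the continuous embedding into $L^\infty$ from Lemma \ref{LemmaEmbedding}, which already identifies elements of $X^\omega_{s,k}$ with bounded continuous functions. No other obstacle is expected.
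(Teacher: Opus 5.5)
Your proof is correct and is the paper's argument: combine the $L^\infty$-embedding of Lemma \ref{LemmaEmbedding} with the integrability of the Gaussian weight $\sigma$. Your remarks on identifying elements of the completion with continuous functions only spell out what the paper takes for granted.
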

\begin{proof}
Since $X_{s,k}^{\omega}$ is continuously embedded into $L^{\infty}(\R^3)$ by Lemma \ref{LemmaEmbedding}, and $L^{\infty}(\R^3)$ is continuously embedded into $L^2_{\sigma}(\R^3)$ by the strong decay of the weight $\sigma$, the claim follows.
\end{proof}

We make the following observation. 

\begin{lemma} \label{VginSpace}
The potential $V$ defined in Eq.~\eqref{Def:Potential} belongs to $X_{s,k}^{\omega}$.
\end{lemma}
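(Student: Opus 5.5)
Since $V$ is radial, we have $\Omega_{ij}V = 0$ for all $1\le i<j\le 3$. The angular part of the norm therefore vanishes, and it suffices to show $V \in \dot H^{s}(\R^3)\cap \dot H^{k}(\R^3)$. More precisely, we will show that $V$ is the limit in this norm of $C^\infty_c$ functions, e.g.\ truncations $\chi(\cdot/R)V$ with $\chi$ a radial cutoff. Truncation preserves radial symmetry, so the angular terms stay zero along the approximating sequence.

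First we write $V$ explicitly. With $\phi(r)^{p-1} = a/(b+r^2)$ we get
\[
V(y) = \frac{pa}{b+|y|^2} - c(2p-1)\frac{a^2|y|^2}{(b+|y|^2)^2}.
\]
Since $0<c<p/9$ gives $b>0$, the function $V$ is a rational function of $|y|^2$ with nonvanishing denominator. Hence $V\in C^\infty(\R^3)$. Moreover, writing $|y|^2 = (b+|y|^2)-b$, we see that $V$ is a linear combination of $(b+|y|^2)^{-1}$ and $(b+|y|^2)^{-2}$. It therefore has symbol-type decay:
\[
|\partial^\alpha V(y)| \lesssim \langle y\rangle^{-2-|\alpha|} \quad \text{for all multi-indices } \alpha.
\]

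Next we bound the Sobolev norms. For the integer $k\in\{2,4\}$ we use \eqref{EqNorm}. Since $|\partial^\alpha V|^2 \lesssim \langle y\rangle^{-4-2k}$ with $4+2k>3$, we get $\partial^\alpha V\in L^2$ for $|\alpha|=k$. For the low exponent $s$ we need to handle $|\xi|^{s}\hat V$ near $\xi=0$. Note that $V\notin L^2(\R^3)$ fails only marginally: $\langle y\rangle^{-4}$ is integrable in 3D, so in fact $V\in L^2$. Also $\nabla V\in L^2$. Hence $V\in H^1$, and interpolation gives $V\in \dot H^{t}$ for all $t\in[0,1]$. Combined with $V\in\dot H^k$ and interpolation between $1$ and $k$, this yields $V\in\dot H^{t}$ for all $t\in[0,k]$, in particular $t=s\in(0,3/2)$.

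Finally, the approximation. Let $V_R=\chi(\cdot/R)V$. Then $V_R - V = (\chi(\cdot/R)-1)V$, and its derivatives are supported in $|y|\gtrsim R$. These derivatives are bounded by $\langle y\rangle^{-2-j}$ times factors $R^{-(m-j)}\lesssim \langle y\rangle^{-(m-j)}$. Therefore $\|V_R-V\|_{H^1}\to0$ and $\|V_R-V\|_{\dot H^k}\to0$ by dominated convergence. Interpolation then gives convergence in $\dot H^s\cap\dot H^k$. The angular parts are identically zero, so $V_R\to V$ in $X^\omega_{s,k}$.

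The only mildly delicate point is the low-regularity norm $\dot H^s$, which is nonlocal. Routing it through the inhomogeneous $H^1$ norm via interpolation avoids any Fourier computation. This works because the $\langle y\rangle^{-2}$ decay is just enough for $L^2(\R^3)$ integrability.
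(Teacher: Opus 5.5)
Your proof is correct and follows the paper's argument. Radial symmetry kills the angular terms, and the bounds $|\partial^\alpha V(y)|\lesssim\langle y\rangle^{-2-|\alpha|}$ give $\partial^\alpha V\in L^2(\R^3)$ for all $\alpha$, hence $V\in X^\omega_{s,k}$. You also spell out two details the paper leaves implicit: the interpolation that handles the fractional exponent $s$, and the approximation by $\chi(\cdot/R)V\in C^\infty_c(\R^3)$ that identifies $V$ with an element of the completion.
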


\begin{proof}
Since $V$ is radial, we have $\Omega_{ij} V = 0$. The potential $V$ satisfies $|\partial^{\alpha} V(x)| \lesssim \langle x \rangle^{-(2+|\alpha|)}$ for all $\alpha \in \N_0^3$ and $x \in \R^3$. This implies $\partial^{\alpha} V \in L^2(\R^3)$ for all $\alpha \in \N_0^3$ and hence $V \in X_{s,k}^{\omega}$.
\end{proof}

\begin{proposition}
The restrictions $\mathcal{L}_0^X := \mathcal{L}_0 \big|_{X_{s,k}^{\omega}}$ and $\mathcal{L}^X := \mathcal{L} \big|_{X_{s,k}^{\omega}}$ of the linear operators $\mathcal{L}_0$ and $\mathcal{L}$ to $X_{s,k}^{\omega}$ with domains $\mathcal{D}(\mathcal{L}_0^X) = \mathcal{D}(\mathcal{L}^X) = \{ f \in \mathcal{D}(\mathcal{L}) \cap X_{s,k}^{\omega} : \mathcal{L}f \in X_{s,k}^{\omega} \}$ generate strongly continuous one-parameter semigroups of bounded linear operators $(S_0^X(\tau) )_{\tau \geq 0}$ and $(S^X(\tau) )_{\tau \geq 0}$ on $X_{s,k}^{\omega}$, that are given by the restrictions of $\left( S_0(\tau) \right)_{\tau \geq 0}$ and $\left( S(\tau) \right)_{\tau \geq 0}$ to $X_{s,k}^{\omega}$, respectively, i.e., $S_0^X(\tau) = S_0(\tau) \big|_{X_{s,k}^{\omega}}$ and $S^X(\tau) = S(\tau) \big|_{X_{s,k}^{\omega}}$ for all $\tau \geq 0$.
 Furthermore,

\begin{align}\label{Est:DecayS0}
\Vert S_0(\tau) f \Vert_{X_{s,k}^{\omega}} \leq e^{- \omega_0 \tau} \Vert f \Vert_{X_{s,k}^{\omega}},
\end{align}
for all $\tau \geq 0$ and all $f \in X_{s,k}^{\omega}$, where $\omega_0 := - \frac{1}{2} \left( \frac{3}{2}-\frac{2}{p-1}-s \right) >0$.
\end{proposition}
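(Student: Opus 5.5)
The plan is to prove the decay estimate \eqref{Est:DecayS0} first, by a direct Fourier computation. That computation also shows that $S_0(\tau)$ leaves $X_{s,k}^{\omega}$ invariant. The statement for $S(\tau)$ then follows by a perturbation argument together with the standard abstract result on restricting semigroups to invariant subspaces (Engel--Nagel, Chapter II, "subspace semigroups").

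\textbf{Step 1 (explicit bound for $S_0$).} I work with $f\in C^\infty_c(\R^3)$ and extend by density at the end. With the paper's normalization of $\mathcal F$, the heat convolution gives $\mathcal F(G_{\alpha}\ast f)(\eta)=e^{-\alpha|\eta|^2}\hat f(\eta)$. The dilation $h(x)=g(e^{-\tau/2}x)$ gives $\hat h(\xi)=e^{\frac{3}{2}\tau}\hat g(e^{\tau/2}\xi)$. Substituting $\eta=e^{\tau/2}\xi$, for any $t\ge 0$ I obtain
\begin{align*}
\Vert S_0(\tau)f\Vert_{\dot H^t}^2 = e^{(\frac32-\frac{2}{p-1}-t)\tau}\int_{\R^3}|\eta|^{2t}e^{-2\alpha(\tau)|\eta|^2}|\hat f(\eta)|^2\,d\eta \le e^{(\frac32-\frac{2}{p-1}-t)\tau}\Vert f\Vert_{\dot H^t}^2 .
\end{align*}
For $t=s$ the exponent equals $-\varepsilon\tau=-2\omega_0\tau$. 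For $t=k>\frac32$ the exponent is even more negative, so it is at most $-2\omega_0\tau$ as well.

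\textbf{Step 2 (angular part).} The kernel $G_{\alpha}$ is radial, and the map $x\mapsto e^{-\tau/2}x$ is a dilation. Both commute with rotations, so $\Omega_{ij}S_0(\tau)f=S_0(\tau)\Omega_{ij}f$. Concretely, $\Omega_{ij}$ commutes with convolution against a radial function and with isotropic dilations. Applying Step 1 to $\Omega_{ij}f$ then yields \eqref{Est:DecayS0} on $C^\infty_c$, and density extends it to all of $X_{s,k}^{\omega}$. The density step uses that $S_0(\tau)$ is bounded on $L^2_\sigma$ and that $X_{s,k}^{\omega}\hookrightarrow L^2_\sigma$ by Lemma \ref{Embedding2}, so the $L^2_\sigma$- and $X$-limits agree.

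\textbf{Step 3 (strong continuity and the restricted generator).} I check strong continuity on $X_{s,k}^{\omega}$ for $f\in C^\infty_c$ using the same Fourier formula and dominated convergence, and then use uniform boundedness to pass to general $f$. The general fact about a $C_0$-semigroup leaving a continuously embedded Banach subspace invariant and strongly continuous on it is this: the generator of the restricted semigroup is the part of $\mathcal L_0$ in $X_{s,k}^{\omega}$, i.e. it has domain $\{f\in\mathcal D(\mathcal L_0)\cap X: \mathcal L_0 f\in X\}$. This gives the claim for $\mathcal L_0^X$.

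\textbf{Step 4 (the perturbed operator).} By Lemma \ref{VginSpace} and \eqref{Multi1}, multiplication by $V$ is a bounded operator on $X_{s,k}^{\omega}$. The bounded perturbation theorem then shows that $\mathcal L_0^X+\mathcal L_1|_X$ generates a $C_0$-semigroup on $X$. Its domain is $\mathcal D(\mathcal L_0^X)$, which coincides with $\{f\in\mathcal D(\mathcal L)\cap X:\mathcal Lf\in X\}$ because $Vf\in X$ for $f\in X$.

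It remains to identify this semigroup with $S(\tau)|_X$. Both semigroups are given by the Dyson--Phillips series (or the Duhamel fixed point) built from $S_0$ and $V$. The series built in $X$ converges in $X$, and therefore also in $L^2_\sigma$, to the corresponding series in $L^2_\sigma$. Hence the two semigroups coincide on $X$.

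I expect the only delicate point to be this final identification of generators and domains, together with the density and closure argument. The decay estimate itself is a routine scaling computation.
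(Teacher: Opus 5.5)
Your proposal is correct and follows essentially the same route as the paper. Both arguments use the explicit Fourier formula for $S_0(\tau)$ to get the decay estimate and strong continuity (via dominated convergence), then the Engel--Nagel subspace-semigroup result to identify $\mathcal{L}_0^X$, and then the bounded perturbation theorem using $V\in X_{s,k}^{\omega}$ and the algebra property. Your extra justifications fill in steps the paper leaves implicit: the commutation $\Omega_{ij}S_0(\tau)=S_0(\tau)\Omega_{ij}$, identifying the density limits through $X_{s,k}^{\omega}\hookrightarrow L^2_\sigma$, and the Dyson--Phillips identification $S^X(\tau)=S(\tau)|_{X_{s,k}^{\omega}}$.
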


\begin{proof}
A straightforward calculation using the explicit form of $S_0(\tau)$, in particular
\begin{align*}
\mathcal{F}(S_0(\tau) f)(\cdot) = e^{\left(\frac{3}{2}-\frac{1}{p-1} \right) \tau} e^{(1-e^{\tau})|\cdot|^2} \mathcal{F}(f)(e^{\frac{\tau}{2}} \cdot),
\end{align*}
shows that Eq.~\eqref{Est:DecayS0} holds for all $f \in C_c^{\infty}(\R^d)$. 
Furthermore, 
\begin{align*}
\Vert |\cdot|^s (\mathcal{F}(S_0(\tau) f-f)) \Vert_{L^2} = \Vert |\cdot|^s (e^{(\frac{3}{2}-\frac{1}{p-1})\tau} e^{(1-e^{\tau})|\cdot|^2} \mathcal{F}(f)(e^{\frac{\tau}{2}} \cdot) - \mathcal{F}(f)) \Vert_{L^2},
\end{align*}
and 
\begin{align*}
\Vert |\cdot|^s (\mathcal{F}(\Omega_{ij}(S_0(\tau) f-f))) \Vert_{L^2} = \Vert |\cdot|^s (e^{(\frac{3}{2}-\frac{1}{p-1})\tau} e^{(1-e^{\tau})|\cdot|^2} \mathcal{F}(\Omega_{ij} f)(e^{\frac{\tau}{2}} \cdot) - \mathcal{F}(\Omega_{ij} f)) \Vert_{L^2},
\end{align*}
for $1 \leq i < j \leq 3$. By the Dominated Convergence Theorem we infer 
\begin{align*}
\lim_{\tau \rightarrow 0^+} \Vert S_0(\tau)f-f \Vert_{X_{s,k}^{\omega}} = 0.
\end{align*}
Density of $C^{\infty}_c(\R^3)$ in $X_{s,k}^{\omega}$ implies that $S_0(\tau)$ defines a strongly continuous semigroup on $X_{s,k}^{\omega}$ and the estimate \eqref{Est:DecayS0} holds on all of $X_{s,k}^{\omega}$. Since $X_{s,k}^{\omega}$ is invariant under $\left( S_0(\tau) \right)_{\tau \geq 0}$   we find (by Proposition 2.3, Chapter II, \cite{MR1721989}) that $\mathcal{L}_0^X := \mathcal{L}_0 \big|_{X_{s,k}^{\omega}}$ with domain $\mathcal{D}(\mathcal{L}_0^X) = \{ f \in \mathcal{L}_0 \cap X_{s,k}^{\omega} : \mathcal{L}_0 f \in X_{s,k}^{\omega} \}$ generates the restricted semigroup $( S_0^X(\tau) )_{\tau \geq 0}$ in $X_{s,k}^{\omega}$, where $S_0^X(\tau) = S_0(\tau) \big|_{X_{s,k}^{\omega}}$. The  Bounded Perturbation Theorem (again see Chapter III, Theorem 1.3 in \cite{MR1721989}) implies that $\mathcal{L}^X = \mathcal{L}_0^X +\mc L_1 = (\mathcal{L}_0 + \mc L_1) \big|_{X_{s,k}^{\omega}}$  with $\mathcal{D}(\mathcal{L}^X) = \mathcal{D}(\mathcal{L}_0^X)$ generates the strongly continuous semigroup $(S^X(\tau))_{\tau \geq 0}$ on $X_{s,k}^{\omega}$, which is given by the restriction $S^X(\tau) = S(\tau) \big|_{X_{s,k}^{\omega}}$ for all $\tau \geq 0$. This follows since
\begin{align*}
\Vert \mc  L_1 f \Vert_{X_{s,k}^{\omega}} = \Vert V f \Vert_{X_{s,k}^{\omega}} \lesssim \Vert V \Vert_{X_{s,k}^{\omega}} \Vert f \Vert_{X_{s,k}^{\omega}},
\end{align*}
for all $f \in X_{s,k}^{\omega}$ by Lemma \ref{LemmaEmbedding}, where $V \in X_{s,k}^{\omega}$ by Lemma \ref{VginSpace}.
\end{proof}

\begin{lemma}
The space of Schwartz functions $\mc S(\R^3)$ is a core for $\mathcal{L}^X$.
\end{lemma}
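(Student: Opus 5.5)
We would use the standard core criterion for semigroup generators (Proposition 1.7, Chapter II in \cite{MR1721989}): a subspace $D \subseteq \mathcal{D}(A)$ which is dense in the underlying Banach space and invariant under the semigroup is a core for the generator. Since $\mathcal{L}^X = \mathcal{L}_0^X + \mc L_1$ with $\mc L_1$ bounded on $X_{s,k}^{\omega}$, the graph norms of $\mathcal{L}^X$ and $\mathcal{L}_0^X$ are equivalent, and both operators have the same domain. It therefore suffices to show that $\mc S(\R^3)$ is a core for $\mathcal{L}_0^X$, and for this we apply the criterion to the explicit semigroup $(S_0^X(\tau))_{\tau\ge 0}$.

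First, $\mc S(\R^3) \subseteq \mathcal{D}(\mathcal{L}_0^X)$. For $f \in \mc S(\R^3)$, we have $f \in X_{s,k}^{\omega}$ because every $\partial^\alpha f$ and every $\Omega_{ij}\partial^\alpha f$ lies in $L^2$, and the low-frequency part of $\dot H^s$ with $s<\frac32$ is finite since $\hat f$ is bounded near $0$. Approximating $f$ by $\chi(\cdot/R) f$ shows that $f$ lies in the completion. Moreover, $\mathcal{L}_0 f = \Delta f - \frac12 y\cdot\nabla f - \frac{1}{p-1} f$ is again Schwartz, hence in $X_{s,k}^{\omega}$. It remains to check that $f \in \mathcal{D}(\mathcal{L})$ in $L^2_\sigma$ with $\mathcal{L} f$ given by the classical expression. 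For this, take cutoffs $f_R = \chi(\cdot/R) f \in C_c^\infty$. Then $f_R \to f$ and $L f_R \to L f$ in $L^2_\sigma$, since the commutator terms are supported where $|y|\simeq R$ and carry factors of $f$ and its derivatives. Closedness of $\mathcal{L}$ then gives $f\in\mathcal{D}(\mathcal{L})$. Density of $\mc S(\R^3)$ in $X_{s,k}^{\omega}$ is immediate, because $C_c^\infty(\R^3)\subseteq\mc S(\R^3)$ is dense by definition of the space.

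Next, we check that $S_0(\tau)$ maps $\mc S(\R^3)$ into itself. By the Fourier formula from the preceding proof,
\[
\mathcal{F}(S_0(\tau) f)(\xi) = e^{(\frac32-\frac1{p-1})\tau} e^{(1-e^{\tau})|\xi|^2}\hat f(e^{\tau/2}\xi),
\]
which is a Schwartz function multiplied by a Gaussian factor (bounded together with all its derivatives, since $1-e^\tau\le 0$). It is therefore Schwartz, and so $S_0(\tau)f\in\mc S(\R^3)$. Equivalently, in physical space $S_0(\tau)f$ is a rescaling of the convolution of a Schwartz function with a Gaussian. The core criterion then yields that $\mc S(\R^3)$ is a core for $\mathcal{L}_0^X$, and hence for $\mathcal{L}^X$, because bounded perturbations preserve cores.

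The main obstacle we expect is the bookkeeping in the first step: verifying that a Schwartz function lies in the abstract domain $\mathcal{D}(\mathcal{L}^X)$, which is defined through the $L^2_\sigma$ closure. The cutoff argument handles this, but one must also confirm that the restricted generator acts on $f$ by the classical formula; this follows from the same approximation. Everything else follows directly from the explicit form of $S_0$.
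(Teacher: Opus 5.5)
Your proposal is correct and takes the same route as the paper. The paper also argues that $\mc S(\R^3)$ is dense in $X_{s,k}^{\omega}$ and invariant under $S_0^X(\tau)$, hence a core for $\mathcal{L}_0^X$, and then transfers this to $\mathcal{L}^X$ because $\mc L_1$ is a bounded perturbation. Your cutoff-and-closedness check that $\mc S(\R^3)\subseteq\mathcal{D}(\mathcal{L}_0^X)$ supplies a hypothesis of the core criterion that the paper leaves implicit.
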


\begin{proof}
This follows directly as $\mc S(\R^3)$ is dense in $X_{s,k}^{\omega}$ and $S^X_0(\tau)$ leaves Schwartz functions invariant, hence $\mc S(\R^3)$ is a core for $\mathcal{L}_0^X$. Since $\mathcal{L}^X$ is a bounded perturbation of $\mathcal{L}_0^X$, the space $\mc S(\R^3)$ is also a core for $\mathcal{L}^X$.
\end{proof}

We emphasize that the Bounded Perturbation Theorem yields the bound
\begin{align*}
\Vert S^X(\tau) f \Vert_{X_{s,k}^{\omega}} \leq e^{\left( - \omega_0 +\Vert \mc L_1 \Vert \right) \tau} \Vert f \Vert_{X_{s,k}^{\omega}},
\end{align*}
for all $f \in X_{s,k}^{\omega}$, with the constant $\omega_0 = - \frac{1}{2} \left( \frac{3}{2}-\frac{2}{p-1}-s \right) > 0$. But, since we can not guarantee the exponent $-\frac{1}{2} \omega_0 + \Vert \mc L_1 \Vert$ to be negative, there is special effort necessary to show exponential decay of the semigroup in $X_{s,k}^{\omega}$. In order to use the information on the spectrum of $\mc L$ in $L^2_{\sigma}(\R^3)$ we first prove the following result. 

\begin{lemma} \label{Lemmacompactperturbation}
The semigroup $(S^X(\tau))_{\tau \geq 0}$ is a compact perturbation of the free semigroup $(S^X_0(\tau))_{\tau \geq 0}$, meaning that the right-hand side of the variation of parameters formula
\begin{align*}
S^X(\tau) - S^X_0(\tau) = \int_0^{\tau} S^X(\tau - \tau^{\prime}) \mc L_1 S^X_0(\tau^{\prime}) d \tau^{\prime},
\end{align*}
is a compact operator for each $\tau \geq 0$.
\end{lemma}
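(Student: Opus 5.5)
The plan is to show that for each fixed $\tau>0$ the map $f \mapsto \mc L_1 S_0^X(\tau') f = V\, S_0(\tau') f$ is compact on $X_{s,k}^{\omega}$ for every $\tau' \in (0,\tau]$, with operator norm bounded uniformly in $\tau'$. The integrand $\tau' \mapsto S^X(\tau-\tau')\mc L_1 S^X_0(\tau')$ is strongly continuous, and compact operators form a closed ideal. So the Bochner/Riemann integral is a norm limit of Riemann sums of compact operators, once we know the integrand is continuous in operator norm on $[\delta,\tau]$ and the piece over $[0,\delta]$ is small. That smallness is automatic: the $[0,\delta]$ piece has norm $\lesssim \delta \sup_{\tau'\le \tau}\|S^X(\tau-\tau')\|\,\|\mc L_1\|$, which tends to $0$ as $\delta\to0$. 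The case $\tau=0$ is trivial.

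Compactness of $V S_0(\tau')$ for $\tau'>0$ comes from the smoothing in the Fourier formula of the previous proposition, $\mathcal F(S_0(\tau')f)(\xi)= e^{(\frac32-\frac1{p-1})\tau'}e^{(1-e^{\tau'})|\xi|^2}\hat f(e^{\tau'/2}\xi)$. The Gaussian factor shows that $S_0(\tau')$ maps $X_{s,k}^{\omega}$ boundedly into $\dot H^{s,1}_\omega\cap\dot H^{m,1}_\omega$ for every $m\ge k$; note that $\Omega_{ij}$ commutes with $S_0(\tau')$ because both the dilation and the radial Gaussian are rotation invariant. Multiplication by $V$ then supplies the decay: $|\partial^\alpha V|\lesssim\langle x\rangle^{-2-|\alpha|}$ and $\Omega_{ij}V=0$. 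To get compactness, take a bounded sequence $(f_n)$ and set $h_n=S_0(\tau')f_n$. I will split $V=\chi_R V+(1-\chi_R)V$ with a smooth cutoff $\chi_R$ supported in $B_{2R}$.

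For the tail part $(1-\chi_R)V h_n$, I will show that its $X^\omega_{s,k}$ norm is small uniformly in $n$ as $R\to\infty$. I will use the product estimate of Lemma \ref{LemmaEmbedding} in the form $\|(1-\chi_R)V h\|_{X}\lesssim \|(1-\chi_R)V\|_{X}\|h\|_X$ and then check that $\|(1-\chi_R)V\|_{\dot H^s\cap\dot H^k}\to0$. For integer orders this follows from the pointwise decay together with $L^2$ integrability of $\langle x\rangle^{-2-j}$ on $|x|>R$ for $j\ge1$. For the fractional order $s\in(\tfrac12,\tfrac32)$, I will interpolate between $\dot H^1$ and $\dot H^{0}$, or use $\dot H^{1}\cap\dot H^{2}$ when $s>1$. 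The one subtle point is that $V\notin L^2$ ($V\sim|x|^{-2}$ gives $\int_R^\infty r^{-4}r^2\,dr<\infty$, so actually $V\in L^2(\{|x|>R\})$ with tail norm $\sim R^{-1/2}$), so interpolation is fine here.

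For the localized part $\chi_R V h_n$, I will use Rellich: on $B_{2R}$ the functions $h_n$ are bounded in $H^{m}(B_{2R})$ for large $m$, together with their $\Omega_{ij}$-derivatives. The homogeneous $\dot H^s$ control with $s<\tfrac32$ plus the $L^\infty$ embedding gives local $L^2$ bounds. Hence a subsequence converges in $H^{k+1}(B_{2R})$, and likewise for $\Omega_{ij}h_n$. Multiplying by the compactly supported smooth function $\chi_RV$ then gives convergence in $H^{k}(\R^3)$ of compactly supported functions, which controls $\dot H^s\cap\dot H^k$: for compact support in a fixed ball the $\dot H^s$ norm is bounded by the $H^k$ norm, since $s\ge0$. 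A diagonal argument over $R\to\infty$ then yields a Cauchy subsequence of $V h_n$ in $X^\omega_{s,k}$.

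The main obstacle is operator-norm continuity of the integrand in $\tau'$, which is what the Riemann-sum argument needs. An alternative that avoids it is to prove directly that the integral operator maps bounded sets to relatively compact sets, via the same cutoff/Rellich argument applied to $\int_\delta^\tau S^X(\tau-\tau')V S_0(\tau')f\,d\tau'$. For this I would use that the smoothing bounds on $S_0(\tau')$ are uniform for $\tau'\in[\delta,\tau]$, that $\tau'\mapsto VS_0(\tau')f$ is equicontinuous on $[\delta,\tau]$ for bounded $f$ (Fourier multiplier continuity, uniform in $\tau'\ge\delta$), and Arzelà–Ascoli-type compactness of the resulting vector-valued integrals. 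Concretely, I would approximate the integral by Riemann sums uniformly over the bounded set using that equicontinuity, and each Riemann sum is compact by the step above.
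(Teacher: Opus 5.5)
Your proof that $\mc L_1 S_0^X(\tau') = VS_0(\tau')$ is compact for fixed $\tau'>0$ is sound, and it differs from the paper's. The paper proves relative compactness of the image of the unit ball in the integer-order space $X^{\omega}_{\lfloor s\rfloor,k}$. It uses a Rellich--Kondrachov/Kolmogorov--Riesz type criterion: boundedness in $H^1$ plus uniformly small tails, checked for $VS_0f$, $\Delta(VS_0f)$ and their $\Omega_{ij}$-analogues. Your splitting instead makes the tail small in operator norm via the algebra property and $\|(1-\chi_R)V\|_{X_{s,k}^{\omega}}\to0$, and treats the localized part by Rellich on balls. This avoids fractional-order computations and covers $p\ge5$ on the same footing.

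The gap is the step you flag yourself: passing from compactness of the integrand at each $\tau'$ to compactness of $\int_0^\tau S^X(\tau-\tau')\mc L_1S_0^X(\tau')\,d\tau'$.
Your main route assumes operator-norm continuity of the integrand without proving it. The fallback relies on norm continuity of $\tau'\mapsto VS_0(\tau')$ ``by Fourier multiplier continuity'', and that reason does not hold up. $S_0(\tau')$ is the Gaussian multiplier $e^{(1-e^{\tau'})|\xi|^2}$ composed with the dilation $\hat f\mapsto\hat f(e^{\tau'/2}\cdot)$. The Gaussian damps only high frequencies, and dilations are not norm continuous on $\dot H^s$. Radial $f$ with $\hat f$ oscillating in $\log|\xi|$ at low frequencies give $\|S_0(\tau'+h)-S_0(\tau')\|_{X_{s,k}^{\omega}\to X_{s,k}^{\omega}}\gtrsim1$ for arbitrarily small $h$. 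Whatever norm continuity $VS_0(\tau')$ has comes from the compactness supplied by $V$, not from the multiplier. You also never deal with the outer factor $S^X(\tau-\tau')$, which is only strongly continuous.

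The paper closes this step by citing \cite{MR1721989}, Theorem C.7: a strongly continuous operator-valued integrand with compact values has a compact strong integral. With that, your pointwise compactness is all that is needed.

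You can also close it directly with your own estimates:
\begin{enumerate}
\item Every bound in your cutoff/Rellich argument is uniform for $\tau'\in[\delta,\tau]$. Hence $\mathcal K_\delta:=\{VS_0(\tau')f:\tau'\in[\delta,\tau],\ \|f\|_{X_{s,k}^{\omega}}\le1\}$ is relatively compact.
\item By joint continuity of $(\sigma,g)\mapsto S^X(\sigma)g$, the set $\mathcal C:=\{S^X(\sigma)g:\sigma\in[0,\tau],\ g\in\overline{\mathcal K_\delta}\}$ is compact.
\item Every Riemann sum of $\int_\delta^\tau S^X(\tau-\tau')VS_0(\tau')f\,d\tau'$ lies in $(\tau-\delta)\,\mathrm{conv}\,\mathcal C$. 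So the integral lies in $(\tau-\delta)\,\overline{\mathrm{conv}}\,\mathcal C$, which is compact by Mazur's theorem.
\item Your estimate on the $[0,\delta]$ piece then finishes the proof.
\end{enumerate}

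Alternatively, write $VS_0(\tau')=[VS_0(\delta/2)]\,S_0(\tau'-\delta/2)$. Since the adjoint of a $C_0$-semigroup on a Hilbert space is strongly continuous, this gives norm continuity of $\tau'\mapsto VS_0(\tau')$ on $[\delta,\tau]$. Then absorb $S^X(\tau-\tau')$ using that strong convergence is uniform on compact sets, and your Riemann-sum argument goes through.
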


\begin{proof}
Let $\tau > 0$, then by \cite{MR1721989}, Theorem C.7 it suffices to show that $S^X(\tau - \tau^{\prime}) \mc L_1 S^X_0(\tau^{\prime})$ is a compact operator for each $\tau^{\prime} \in (0, \tau)$. Furthermore, since $S^X(\tau - \tau^{\prime})$ is a linear and bounded operator, it suffices to show that $ \mc L_1 S^X_0(\tau^{\prime})$ is a compact operator for each $\tau^{\prime} \in (0, \tau)$. To prove this we use a variant of the Rellich-Kondrachov Theorem (see Theorem 10, \cite{Hanche_Olsen_2010}) and show that $[\mc L_1 S_0^X(\tau)](B_1^X)$ is relatively compact in $X_{\lfloor s \rfloor,k}^{\omega}$, where $B_1^X = \{ f \in X_{s,k}^{\omega} : \Vert f \Vert_{X_{s,k}^{\omega}} \leq 1\}$. The continuous embedding of $X_{\lfloor s \rfloor,k}^{\omega}$ into $X_{s,k}^{\omega}$ then implies relative compactness of $[\mc L_1 S_0^X(\tau)](B_1^X)$ in $X_{s,k}^{\omega}$.\\
Since the proof depends on the precise values of $s$ and $k$, we start with the case $p=3$, where we have $s < 1$, $\lfloor s \rfloor =0$ and $k=2$.
Let $K_1 := V \cdot S_0^X(\tau)(B_1^X)$, $K_2 := \Delta (V \cdot S_0^X(\tau)(B_1^X))$, $K_3^{ij} := V \cdot \Omega_{ij}(S_0^X(\tau)(B_1^X))$ and $K_4^{ij} := \Delta (V \cdot \Omega_{ij} (S_0^X(\tau)(B_1^X)))$ for $1 \leq i < j \leq 3$. To apply \cite{Hanche_Olsen_2010}, Theorem 10, we need to show that each of these subsets is bounded in $H^1(\R^3)$ and for all $\varepsilon > 0$ there exists $R >0$ such that for all $f$ belonging to the corresponding subset we have that 
\begin{align} \label{CompactnessTheorem}
\int_{|x| \geq R} |f(x)|^2 + |\nabla f(x)|^2 dx < \varepsilon.
\end{align}
Let $f \in B_1^X$ and $R>0$, then
\begin{align} \label{K1}
\int_{|x| \geq R} & |V(x)(S_0^X(\tau)f)(x)|^2 +|\nabla ( V (S_0^X(\tau)f) )(x)|^2 dx \\
&\lesssim \int_{|x| \geq R} (V(x)^2  + |\nabla V(x)|^2) (S_0^X(\tau)f)(x)^2 + V(x)^2 |\nabla (S_0^X(\tau)f)(x)|^2 dx \nonumber \\
&\lesssim (\Vert V \Vert_{L^2(B_R^c)}^2 + \Vert \nabla V \Vert^2_{L^2(B_R^c)}) \Vert S_0^X(\tau) f \Vert_{L^{\infty}}^2 + \Vert V \Vert_{L^{\infty}(B_R^c)}^2 \Vert S_0^X(\tau) f \Vert_{\dot{H}^1(\R^3)}^2 \nonumber \\
&\lesssim \frac{1}{R} \Vert S_0^X(\tau) f \Vert_{X_{s,k}^{\omega}}^2 \lesssim \frac{1}{R} e^{-\omega_0 \tau}, \nonumber
\end{align}
where we used the decay of $V$ and the $L^{\infty}$-embedding of $X_{s,k}^{\omega}$, see Lemma \ref{LemmaEmbedding}. An analogous argument shows that $K_1$ is bounded in $H^1(\R^3)$. For $K_2$ we need to estimate
\begin{align} \label{K2}
&\int_{|x| \geq R} |\Delta (V(S_0^X(\tau)f)) (x)|^2 + |\nabla(\Delta( (V(S_0^X(\tau)f))) (x)|^2 dx\\
&\lesssim \Vert V \Vert_{L^{\infty}(B_R^c)}^2 \Vert \nabla( \Delta(S_0^X(\tau)f)) \Vert_{L^2}^2 + \sum_{\substack{\alpha \in \N_0^3 \\ |\alpha| \leq 3}} \Vert \partial^{\alpha} V \Vert_{L^{2}(B_R^c)}^2 \Vert S_0^X(\tau) f \Vert_{L^{\infty}}^2 \nonumber \\
&+  \sum_{\substack{\beta \in \N_0^3 \\ |\beta| \leq 2}} \Vert \partial^{\beta} V \Vert_{L^{\infty}(B_R^c)}^2 \left( \Vert S_0^X(\tau) f \Vert_{\dot{H}^1(\R^3)}^2 + \Vert S_0^X(\tau) f \Vert_{\dot{H}^2(\R^3)}^2 \right) \lesssim \frac{1}{R} M(\tau), \nonumber
\end{align}
for a constant $M = M(\tau) >0 $ depending on $\tau$, where where we again used the decay of $V$, the $L^{\infty}$-embedding of $X_{s,k}^{\omega}$ and
\begin{align*}
\Vert \nabla( \Delta(S_0^X(\tau)f)) \Vert_{L^2}^2 &\lesssim e^{\left( \frac{3}{2} - \frac{2}{p-1} -3 \right) \tau} \sum_{m=1}^3 \Vert \partial_m G_{\alpha(\tau)} \Vert_{L^1}^2 \Vert \Delta f \Vert_{L^2}^2\\
&\lesssim m(\tau) \Vert f \Vert_{X_{s,k}^{\omega}},
\end{align*}
for some constant $m(\tau) >0$. Furthermore, boundedness of $K_2$ in $H^1(\R^3)$ follows by analogous estimates. Let $1 \leq i < j \leq 3$, then the argumentation for $K_3^{ij}$ is the same as for $K_1$ by replacing $S_0^X(\tau) f$ in \eqref{K1} by $\Omega_{ij} S_0^X(\tau) f$. For $K_4^{ij}$ we do the same as for $K_2$ by replacing $S_0^X(\tau) f$ in \eqref{K2} by $\Omega_{ij} S_0^X(\tau) f$. More precisely, by using that $\Omega_{ij}$ commutes with the Laplace operator and the Fourier transform, we get  
\begin{align*}
\Vert \nabla( \Delta(\Omega_{ij} S_0^X(\tau)f)) \Vert_{L^2}^2 &\lesssim e^{\left( \frac{3}{2} - \frac{2}{p-1} -3 \right) \tau} \Vert |\cdot| \mathcal{F}(G_{\alpha(\tau)}) \Vert_{L^{\infty}}^2 \Vert \Delta \Omega_{ij} f \Vert_{L^2}^2\\
&\lesssim \overline{m}(\tau) \Vert f \Vert_{X_{s,k}^{\omega}},
\end{align*}
for some constant $\overline{m}(\tau) > 0$. Now, \cite{Hanche_Olsen_2010}, Theorem 10,  implies that each subset $K_1$, $K_2$, $K_3^{ij}$ and $K_4^{ij}$ for $1 \leq i < j \leq 3$ is relatively compact in $L^2(\R^3)$, meaning that $[\mc L_1 S_0^X(\tau)](B_1^X)$ is relatively compact in $X_{\lfloor s \rfloor, k}^{\omega}$ by definition of the norm.\\
For $p \geq 5$, where the Sobolev exponents are given by $1 < s < \frac{3}{2}$ and $k=4$, the proof works similarly by defining suitable subsets $K_n$ and $K_n^{ij}$ for $n \in \{ 1,2 \}$ and $1 \leq i < j \leq 3$ with higher order derivatives that are bounded in $H^1(\R^3)$ and satisfy \eqref{CompactnessTheorem}.
\end{proof}

Based on the previous result for the semigroup, we are now able to characterize the spectrum of the generator in $X_{s,k}^{\omega}$ in more detail.

\begin{proposition} \label{K0}
The set $K_0 := \sigma(\mathcal{L}^X) \cap \{ \lambda \in \C : \Re(\lambda) \geq 0 \}$ consists of finitely many eigenvalues of $\mathcal{L}^X$, all of which are real and have finite algebraic multiplicity.
\end{proposition}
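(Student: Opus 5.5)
The plan is to combine the compactness result of Lemma \ref{Lemmacompactperturbation} with the decay estimate \eqref{Est:DecayS0} through the theory of essential growth bounds of semigroups. Then we transfer the spectral information from $L^2_\sigma(\R^3)$ (Lemma \ref{LemmaOperatorL}) to $X_{s,k}^{\omega}$ via the continuous embedding of Lemma \ref{Embedding2}.

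\emph{Step 1: essential growth bound.} By Lemma \ref{Lemmacompactperturbation}, $S^X(\tau)-S^X_0(\tau)$ is compact for every $\tau>0$. Hence the essential spectral radii agree, $r_{\mathrm{ess}}(S^X(\tau))=r_{\mathrm{ess}}(S^X_0(\tau))\le \|S^X_0(\tau)\|\le e^{-\omega_0\tau}$, and so the essential growth bound satisfies $\omega_{\mathrm{ess}}(S^X)\le-\omega_0<0$ (Engel--Nagel, Chapter IV, Proposition 2.12 and Corollary 2.11). The corresponding structure theorem (Engel--Nagel, Chapter IV, Corollary 2.11) then gives that, for any $w>-\omega_0$, the set $\sigma(\mathcal L^X)\cap\{\Re\lambda\ge w\}$ is finite and consists of poles of the resolvent with finite-rank spectral projections, i.e.\ eigenvalues of finite algebraic multiplicity. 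Taking $w=0$ shows that $K_0$ is a finite set of eigenvalues of finite algebraic multiplicity.

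\emph{Step 2: reality.} Let $\lambda\in K_0$ and let $f\in\mathcal D(\mathcal L^X)\setminus\{0\}$ satisfy $\mathcal L^X f=\lambda f$. Since $\mathcal L^X$ is a restriction of $\mathcal L$, we have $f\in\mathcal D(\mathcal L)$ and $\mathcal L f=\lambda f$ in $L^2_\sigma(\R^3)$. The embedding of Lemma \ref{Embedding2} is injective, because both spaces consist of functions, so $f\neq0$ in $L^2_\sigma$. Thus $\lambda\in\sigma(\mathcal L)\subset\R$ by Lemma \ref{LemmaOperatorL}.

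\emph{Main obstacle.} The delicate point is Step 1: one must check that compactness of $S^X(\tau)-S^X_0(\tau)$ really yields equality of essential spectral radii, which holds by invariance of the essential spectrum under compact perturbations, and that the spectral mapping relation between $\sigma(S^X(\tau))$ outside the essential disc and $\sigma(\mathcal L^X)$ holds. The latter is guaranteed for eigenvalues and poles by the spectral mapping theorem for the point spectrum together with the fact that the isolated part of $\sigma(S^X(\tau))$ outside radius $e^{-\omega_0\tau}$ consists of eigenvalues of finite multiplicity. Finiteness in the half plane follows because only finitely many such spectral points of $S^X(1)$ lie in $\{|\mu|\ge1\}$. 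Each of them has a finite-dimensional generalized eigenspace, which is invariant under $\mathcal L^X$, so only finitely many eigenvalues of $\mathcal L^X$ can map onto them.
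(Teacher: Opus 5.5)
Your proof is correct, and it reaches the conclusion by a somewhat different route than the paper. Both arguments rest on the same two inputs: Lemma \ref{Lemmacompactperturbation} and the free decay \eqref{Est:DecayS0}.

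The paper argues by hand. It factors $\lambda - S^X(\tau)$ through $\lambda - S^X_0(\tau)$ and applies the analytic Fredholm theorem, which shows that $\sigma(S^X(\tau))$ outside the unit disc consists of isolated eigenvalues of finite multiplicity. It then pulls these back to $\mathcal L^X$ with the spectral mapping theorem for the point spectrum, and obtains finiteness of $K_0$ by comparing $\tau=1$ with $\tau=\sqrt2$.

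You instead use two abstract facts: the essential spectral radius is invariant under compact perturbations, and the essential growth bound $\omega_{\mathrm{ess}}$ comes with a structure theorem. Together they give in one stroke that $K_0$ is a finite set of poles of $R_{\mathcal L^X}$ with finite-rank spectral projections.

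What each route buys:
\begin{itemize}
\item The paper's route is more elementary, since it avoids the essential-spectrum and Calkin-algebra machinery.
\item Your route is shorter, and it covers two points the paper's proof glosses over.
\begin{itemize}
\item First, $e^{\tau\lambda}\in\sigma_p(S^X(\tau))$ a priori only yields $\lambda+2\pi i k/\tau\in\sigma_p(\mathcal L^X)$ for some $k\in\Z$, not necessarily $\lambda$ itself.
\item Second, finite algebraic multiplicity has to be transferred from $e^{\tau\lambda}$ as an eigenvalue of $S^X(\tau)$ to $\lambda$ as an eigenvalue of $\mathcal L^X$.
\end{itemize}
\item Your route also directly produces the finite-rank Riesz projections that are used afterwards to define the $P_j$.
\end{itemize}

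Your alternative finiteness remark is a valid substitute for the $\sqrt2$ trick. It combines the finite-dimensional generalized eigenspace of $S^X(1)$ at $\mu$ with linear independence of eigenvectors of $\mathcal L^X$ for distinct $\lambda$ satisfying $e^{\lambda}=\mu$. Your Step 2 coincides with the paper's final line.
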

\begin{proof}
Since $e^{\tau \sigma(\mathcal{L}_0^X)} \subseteq \sigma(S^X_0(\tau))$, see \cite{MR1721989}, Theorem 3.6 on p.276, and $r(S^X_0(\tau)) = \sup \{ |\lambda| : \lambda \in \sigma(S^X_0(\tau)) \} \leq e^{-\omega_0 \tau} \leq 1$, see \cite{MR1721989}, Proposition 2.2 on p.251, we find that $\sigma(\mathcal{L}_0^X) \subseteq \{ \lambda \in \C : \Re (\lambda) \leq - \omega_0 \}$. The fact that $S^X(\tau)$ is a compact perturbation of $S_0^X(\tau)$ by Lemma \ref{Lemmacompactperturbation} implies that $\sigma(S^X(\tau)) \setminus \sigma(S^X_0(\tau))$ consists only of isolated eigenvalues. To see this let $\lambda \in \rho(S^X_0(\tau))$, then we have
\begin{align*}
\lambda - S^X(\tau) = (\lambda - S^X_0(\tau))(Id - (\lambda - S^X_0(\tau))^{-1} (S^X(\tau)-S^X_0(\tau)).
\end{align*}
By the compactness of $(S^X(\tau)-S^X_0(\tau))$ and the analytic Fredholm theorem, see \cite{Simon15_Part4}, Theorem 3.4.13 on p.~194, it follows that $\lambda$ belongs to $\sigma(S^X(\tau)) \setminus \sigma(S^X_0(\tau))$ if and only if $\lambda$ is an isolated eigenvalue of $S^X(\tau)$ and its algebraic multiplicity is finite.\\
Next, for $\tau > 0$ let $\mu \in \sigma(S^X(\tau)) \setminus D_1$, where $D_1 := \{ z \in \C : |z| < 1 \}$. Then $\mu \in \sigma(S^X(\tau)) \setminus \sigma(S^X_0(\tau))$, meaning $\mu \in \sigma_p(S^X(\tau)) \setminus \{ 0 \} = e^{\tau \sigma_p(\mathcal{L}^X)}$, see \cite{MR1721989}, Theorem 3.7 on p.277. Hence, there exists $\lambda \in \sigma_p(\mathcal{L}^X)$ with $\mu = e^{\tau \lambda}$ and $1 \leq | \mu | = e^{\Re(\lambda) \tau}$ implying $\Re(\lambda) \geq 0$. This shows $\sigma(S^X(\tau)) \setminus D_1 \subseteq \{ e^{\tau \lambda} : \lambda \in K_0 \}$. Conversely, $\{ e^{\tau \lambda} : \lambda \in K_0 \} \subseteq \sigma(S^X(\tau)) \setminus D_1$ is true by definition. In summary, we have 
\begin{align*}
\{ e^{\tau \lambda} : \lambda \in K_0 \} = \sigma(S^X(\tau)) \setminus D_1 \subseteq \sigma(S^X(\tau)) \setminus \sigma(S^X_0(\tau)) \subseteq \sigma_p(S^X(\tau)) \setminus \{ 0 \} = e^{\tau \sigma_p(\mathcal{L}^X)},
\end{align*}
hence $K_0 \subseteq \sigma_p(\mathcal{L}^X)$. Furthermore, the set $\{ e^{\tau \lambda} : \lambda \in K_0 \}$ is finite, since its possible accumulation points necessarily lie in $\sigma(S_0^X(\tau)) \subseteq D_1$. This, in turn, implies that $K_0$ is finite. Indeed, otherwise there would be an isolated eigenvalue $\mu \in \{ e^{\lambda} : \lambda \in K_0 \}$, to which infinitely many $\lambda \in K_0$ are mapped via $\lambda \mapsto e^{\lambda} = \mu$. This means, there exist $\alpha \in K_0$ and an infinite set $I \subseteq \Z$ with $\{ \alpha + 2 \pi i k : k \in I \} \subseteq K_0$. If we choose $\tau = \sqrt{2}$, then it follows that $\{ e^{\sqrt{2}(\alpha + 2 \pi i k)} : k \in I \} \subseteq \{ e^{\sqrt{2} \lambda} : \lambda \in K_0 \}$ has to be finite, which is a contradiction.\\
That all eigenvalues of $\mathcal{L}^X$ in $K_0$ are real, follows directly from Lemma \ref{LemmaOperatorL}.
\end{proof}

In the following, we denote by $\{ \lambda_j \}_{1 \leq j \leq \tilde{N}}$ the finite set $K_0$ from Proposition \ref{K0}. For each eigenvalue $\lambda_j$, we can choose a finite basis of real-valued eigenfunctions $\{ f_j^{i} \}_{1 \leq i \leq N_j}$ for some $N_j \in \N$. We define Riesz-projections corresponding to each eigenvalue $\lambda_j$ for $j \in \{1,...,\tilde{N} \}$ by
\begin{align*}
P_j := \frac{1}{2 \pi i} \int_{\gamma_j} R_{\mathcal{L}^X}(\lambda^{\prime}) d \lambda^{\prime},
\end{align*}
where $\gamma_j$ is a positively oriented circle centered at $\lambda_j$, contained in $\rho(\mathcal{L}^X)$ and which, apart from $\lambda_j$, contains no other spectral points in its interior. In particular, $\gamma_j(s) = \lambda_j + r_j e^{2 \pi i s}$, with $s \in [0,1]$ and $r_j>0$ sufficiently small.\\
We observe that each Riesz-projection is the restriction of the corresponding Riesz-projection in $L^2_{\sigma}(\R^3)$, which is due to the fact that the resolvent of $\mathcal{L}^X$ is the restriction of the resolvent of $\mathcal{L}$ to $X_{s,k}^{\omega}$. Since each eigenfunction $f_j^{i}$ is also an eigenfunction of the self-adjoint operator $\mathcal{L}$ in $L^2_{\sigma}(\R^3)$ to the eigenvalue $\lambda_j$ the projections are given by
\begin{align*}
P_j f = \sum_{i=1}^{N_j} P_j^{i} f, \quad \text{with} \quad P_j^{i} f = \alpha_j^{i} \langle f, f_j^{i} \rangle_{L^2_{\sigma}(\R^3)} f_j^{i}, \quad \alpha_j^{i} \in \R \setminus \{ 0 \},
\end{align*}
for $f \in X_{s,k}^{\omega}$.
By definition, it follows that 
\begin{align} \label{ProjectionPj}
\text{rg}(P_j) = \text{span}(f_j^1,...,f_j^{N_j}),
\end{align}
for all $j \in \{ 1,...,\tilde{N} \}$. Furthermore, the projections are mutually transversal with $P_j P_l =0$ for $j \neq l$.
In the following we define the projection onto all unstable directions by
\begin{align}
P := \sum_{j=1}^{\tilde{N}} \sum_{i=1}^{N_j} P_j^{i}.
\end{align}

\begin{lemma} \label{K0}
Let $p=3$ and $c \in (\frac{1}{4}, \frac{1}{3})$. Then we have 
\[ \sigma_p(\mathcal{L}^X) \cap \{ \lambda \in \C : \Re(\lambda) \geq 0 \} = \{ 1 \},\] where the geometric and algebraic eigenspaces corresponding to the eigenvalue $\lambda =1$ coincide, are one-dimensional and spanned by $g(x) = (b+|x|^2)^{-\frac{p}{p-1}}$.
\end{lemma}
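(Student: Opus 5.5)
The plan is to transfer the spectral information of Proposition \ref{Prop:p3Spectrum} from $L^2_{\sigma}(\R^3)$ to $X_{s,k}^{\omega}$. Then I show that the algebraic eigenspace at $\lambda=1$ contains no Jordan chain. For the point spectrum, let $\lambda \in \sigma_p(\mathcal{L}^X)$ with $\Re(\lambda)\geq 0$ and eigenfunction $f \in \mathcal{D}(\mathcal{L}^X)$. By definition $\mathcal{D}(\mathcal{L}^X) \subseteq \mathcal{D}(\mathcal{L})$, and $\mathcal{L}^X$ is the restriction of $\mathcal{L}$. Lemma \ref{Embedding2} ensures $f \neq 0$ in $L^2_{\sigma}(\R^3)$, so $f$ is an eigenfunction of $\mathcal{L}$ with the same eigenvalue. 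Proposition \ref{Prop:p3Spectrum} then forces $\lambda=1$ and $f \in \operatorname{span}\{g\}$. Conversely, Remark \ref{Rem:Spec} gives $g \in X_{s,k}^{\omega}$ and $Lg=g$. Since $g$ is smooth with symbol-type decay $|\partial^\alpha g(x)| \lesssim \langle x\rangle^{-3-|\alpha|}$, $\mathcal{L}g = g \in X_{s,k}^{\omega}$. To see that $g \in \mathcal{D}(\mathcal{L})$, I would approximate $g$ by smooth cutoffs $g\chi(\cdot/R)$ and use the Gaussian weight. Hence $g \in \mathcal{D}(\mathcal{L}^X)$, so $1 \in \sigma_p(\mathcal{L}^X)$ with geometric eigenspace $\operatorname{span}\{g\}$.

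For the algebraic multiplicity, Proposition \ref{K0} says $1$ is an isolated eigenvalue of finite algebraic multiplicity. Let $P_1$ denote the Riesz projection in $X_{s,k}^{\omega}$. Its range is finite-dimensional, invariant under $\mathcal{L}^X$, and $(\mathcal{L}^X-1)$ is nilpotent on it. If the algebraic eigenspace were larger than $\operatorname{span}\{g\}$, there would be $h \in \operatorname{rg}(P_1) \subseteq \mathcal{D}(\mathcal{L}^X)$ with $(\mathcal{L}^X - 1)h = g$. Viewing this in $L^2_{\sigma}(\R^3)$ gives $(\mathcal{L}-1)h = g$ with $h \in \mathcal{D}(\mathcal{L})$. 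Taking the $L^2_\sigma$ inner product with $g$ and using self-adjointness (Lemma \ref{LemmaOperatorL}) yields
\begin{align*}
\|g\|_{L^2_\sigma}^2 = \langle (\mathcal{L}-1)h, g\rangle_{L^2_\sigma} = \langle h, (\mathcal{L}-1)g\rangle_{L^2_\sigma} = 0,
\end{align*}
which is a contradiction. Therefore $\operatorname{rg}(P_1) = \ker(\mathcal{L}^X-1) = \operatorname{span}\{g\}$, and the geometric and algebraic eigenspaces coincide.

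The main subtlety I expect is the first step, namely the claim $g \in \mathcal{D}(\mathcal{L})$, i.e.\ that the $L^2_\sigma$ closure really contains $g$ and acts on it as the differential expression. I would settle it by taking $g_R = \chi(\cdot/R) g \in C^\infty_c$. Then $g_R \to g$ and $L g_R \to Lg$ in $L^2_\sigma(\R^3)$, because the error terms are supported in $|x|\geq R$, are polynomially bounded (including the $y\cdot\nabla$ term), and are killed by the weight $e^{-|x|^2/4}$. Closedness of $\mathcal{L}$ then finishes the argument. A second point to state explicitly is that the Riesz projection of $\mathcal{L}^X$ is the restriction of that of $\mathcal{L}$, which the paper has already noted. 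With this, the orthogonality argument above can also be phrased as $P_1 = \alpha\langle\cdot,g\rangle_{L^2_\sigma} g$, giving $\operatorname{rg}(P_1) = \operatorname{span}\{g\}$ directly.
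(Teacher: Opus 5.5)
Your proposal is correct and follows essentially the same route as the paper. You handle the point spectrum by viewing eigenfunctions of $\mathcal{L}^X$ as eigenfunctions of $\mathcal{L}$ and invoking Proposition \ref{Prop:p3Spectrum}, and you exclude a Jordan block at $\lambda=1$ via self-adjointness of $\mathcal{L}$ in $L^2_\sigma(\R^3)$. The differences are cosmetic. The paper reads off $\rg(P_1)=\operatorname{span}\{g\}$ from the explicit form of the Riesz projections in \eqref{ProjectionPj}, while your orthogonality argument against a putative chain $(\mathcal{L}-1)h=g$ is an equivalent, slightly more self-contained version of the same step. Your cutoff argument for $g\in\mathcal{D}(\mathcal{L})$ fills in a detail the paper simply attributes to Proposition \ref{Prop:p3Spectrum}.
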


\begin{proof}
First, we know that $g \in X_{s,k}^{\omega}$. Furthermore, $g \in \mathcal{D}(\mathcal{L})$ by Proposition \ref{Prop:p3Spectrum} and it is straightforward to show $\mathcal{L} g \in X_{s,k}^{\omega}$, which implies $g \in \mathcal{D}(\mathcal{L}^X)$ and we have $\mathcal{L}^X g = g$. This shows $\{ 1 \} \subseteq \sigma_p(\mathcal{L}^X) \cap \{ \lambda \in \C : \Re(\lambda) \geq 0 \}$.
Next, let $\lambda \in \sigma_p(\mathcal{L}^X) \cap \{ \lambda \in \C : \Re(\lambda) \geq 0 \}$, then $\lambda \in \sigma_p(\mathcal{L})$, since $\mathcal{L}^X$ is the restriction of $\mathcal{L}$ to $X_{s,k}^{\omega}$. From \eqref{SpectrumofL} we know $\sigma(\mathcal{L}) \subset (-\infty, 0) \cup \{ 1 \}$, implying that $\lambda = 1$. That the geometric and algebraic eigenspace to the eigenvalue $\lambda = 1$ coincide and are one-dimensional follows from \eqref{ProjectionPj} and Proposition \ref{Prop:p3Spectrum}.
\end{proof}
 
Now, we are able to formulate the main result on the linearized evolution.

\begin{proposition} \label{Semigroupdec}
There exist $\nu > 0$ and $C \geq 1$, such that
\begin{align} \label{Semigroupdecay}
\Vert S^X(\tau)(1-P)f \Vert_{X_{s,k}^{\omega}} \leq C e^{-\nu \tau} \Vert (1-P)f \Vert_{X_{s,k}^{\omega}},
\end{align}
for all $\tau \geq 0$ and $f \in X_{s,k}^{\omega}$.
\end{proposition}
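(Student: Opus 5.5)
The plan is to derive the decay from spectral information on the semigroup itself, in the spirit of the preceding arguments: use the compact perturbation structure from Lemma \ref{Lemmacompactperturbation} together with the spectral mapping results for the point spectrum, and avoid resolvent estimates altogether. Fix $\tau_0 = 1$ and consider the operator $T := S^X(1)$ restricted to the closed invariant subspace $Y := \operatorname{rg}(1-P) = \ker P$. Since $P$ is a finite sum of Riesz projections of $\mathcal{L}^X$, it commutes with $S^X(\tau)$, so $Y$ is invariant and $S^X(\tau)|_Y$ is again a strongly continuous semigroup, with generator $\mathcal{L}^X|_Y$. Because $\mathcal{L}^X$ restricted to $\operatorname{rg} P$ has spectrum $K_0$, we get $\sigma(\mathcal{L}^X|_Y) = \sigma(\mathcal{L}^X)\setminus K_0 \subseteq \{\Re\lambda<0\}$.

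The key step is to show that the spectral radius of $T|_Y$ is strictly less than $1$. First, $T|_Y = S_0^X(1)|_Y' $-type decomposition: on the whole space $S^X(1) = S_0^X(1) + K$ with $K$ compact, and $r(S_0^X(1)) \le e^{-\omega_0}<1$. As in the proof of Proposition \ref{K0}, every $\mu \in \sigma(S^X(1))$ with $|\mu| > e^{-\omega_0}$ is an isolated eigenvalue of finite algebraic multiplicity. The same holds for $T|_Y$, since $(1-P)K(1-P)$ is compact and $(1-P)S_0^X(1)(1-P)$ differs from $S_0^X(1)$ by a finite-rank operator. Hence $\sigma(T|_Y)\cap\{|\mu|\ge e^{-\omega_0/2}\}$ consists of finitely many eigenvalues. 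Each such eigenvalue $\mu\neq0$ equals $e^{\lambda}$ for some $\lambda\in\sigma_p(\mathcal{L}^X|_Y)$ by the spectral mapping theorem for point spectrum (\cite{MR1721989}, Theorem 3.7). Thus $|\mu| = e^{\Re\lambda}<1$. Since there are only finitely many such $\mu$ outside the disc of radius $e^{-\omega_0/2}$, we obtain $r(T|_Y) \le e^{-2\nu}$ for some $\nu>0$.

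From the spectral radius formula, $\|T^n|_Y\|\le C_0 e^{-\nu n}$ for all $n\in\N$. Writing $\tau = n + t$ with $t\in[0,1)$ and using $\sup_{t\in[0,1]}\|S^X(t)\|\le e^{\|\mc L_1\|}$, this gives $\|S^X(\tau)(1-P)f\|\le C e^{-\nu\tau}\|(1-P)f\|$, which is \eqref{Semigroupdecay}.

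The main obstacle is the bookkeeping in the second paragraph. One has to ensure that restricting to $Y$ genuinely removes the eigenvalues $e^{\lambda}$ with $\lambda\in K_0$. In particular, no eigenvector of $T|_Y$ may have eigenvalue $e^{\lambda_j + 2\pi i k}$ with modulus at least $1$. This is handled by the identity $\sigma_p(S^X(1))\setminus\{0\}=e^{\sigma_p(\mathcal{L}^X)}$ applied to the restricted semigroup, combined with the fact that $\sigma_p(\mathcal{L}^X|_Y)$ avoids $\{\Re\lambda\ge0\}$: any eigenfunction of $\mathcal{L}^X$ in $Y$ with $\Re\lambda\ge0$ would lie in $K_0$, hence in $\operatorname{rg}P$, and therefore vanish.
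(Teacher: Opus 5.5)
Your proof is correct and takes essentially the same approach as the paper: you restrict to $\ker P$, use $\sigma(\mathcal{L}^X|_{\ker P}) = \sigma(\mathcal{L}^X)\setminus K_0$, and combine the compact-perturbation structure of $S^X(\tau)$ with the point spectral mapping theorem for the restricted semigroup to get spectral radius of $S^X(\tau)|_{\ker P}$ below $1$. The differences are minor: the paper argues by contradiction via $r(S_1^X(\tau)) = e^{\omega_0(\mathcal{L}_1^X)\tau}$ and the inclusion $\sigma(S_1^X(\tau))\subseteq\sigma(S^X(\tau))$, whereas you argue directly at $\tau=1$ and, using that $(1-P)S^X(1)(1-P)$ equals $S_0^X(1)$ plus a compact operator, explicitly justify that the large-modulus spectral points of the restricted operator are eigenvalues, a step the paper only asserts.
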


\begin{proof}
We note that $\ker(P)$ and $\rg(P)$ are closed subspaces of $X_{s,k}^{\omega}$ with $X_{s,k}^{\omega} = \ker(P) \oplus \rg(P)$. Furthermore, both subspaces $\ker(P)$ and $\rg(P)$ are invariant under the action of $\mathcal{L}^X$. Therefore, we can define the restricted operators $\mathcal{L}^X_1 : \mathcal{D}(\mathcal{L}^X_1) \subseteq \ker(P) \rightarrow \ker(P)$ with domain $\mathcal{D}(\mathcal{L}^X_1) = \mathcal{D}(\mathcal{L}^X) \cap \ker(P)$ given by $\mathcal{L}_1^X f = \mathcal{L}^X f$ for $f \in \mathcal{D}(\mathcal{L}^X_1)$ and $\mathcal{L}^X_2 : \mathcal{D}(\mathcal{L}_2^X) \subseteq \rg(P) \rightarrow \rg(P)$ with domain $\mathcal{D}(\mathcal{L}^X_2) = \mathcal{D}(\mathcal{L}^X) \cap \rg(P)$ given by $\mathcal{L}^X_2 f = \mathcal{L}^Xf$ for $f \in \mathcal{D}(\mathcal{L}^X_2)$. The following spectral decomposition holds 
\begin{align*}
\sigma(\mathcal{L}_1^X) = \sigma(\mathcal{L}^X) \setminus K_0 \quad \text{ and} \quad \sigma(\mathcal{L}_2^X) = K_0,
\end{align*}
see \cite{Kat95}, Theorem 6.17 on p.178. The projection $P$ commutes with $\mathcal{L}^X$ and its resolvent and so it also commutes with the semigroup $S^X(\tau)$ for all $\tau \geq 0$, see \cite{Kat95}, Theorem 6.5 on p.173. The invariance of $\ker(P)$ and $\rg(P)$ under the operators $\mathcal{L}_1^X$ and $\mathcal{L}_2^X$ respectively imply that $(S_1^X(\tau))_{\tau  \geq 0}$ with $S^X_1(\tau) := S^X(\tau) \big|_{\ker(P)}$ and $(S^X_2(\tau))_{\tau \geq 0}$ with $S_2^X(\tau) := S^X(\tau) \big|_{\rg(P)}$ are strongly continuous semigroups on $X_{s,k}^{\omega}$ with generators $\mathcal{L}_1^X$ and $\mathcal{L}_2^X$ respectively, see \cite{MR1721989}, Section 2.3 on p.60. Furthermore, we have $\rho(S^X(\tau)) \subseteq \rho(S^X_1(\tau))$ and hence $\sigma(S^X_1(\tau)) \subseteq \sigma(S^X(\tau))$ for $\tau \geq 0$, since the resolvent of $S_1^X(\tau)$ is given by the restriction of the resolvent of $S^X(\tau)$ on $\ker(P)$.\\
Now, we need to show that the growth bound of $\mathcal{L}_1^X$ is negative, i.e. $\omega_0(\mathcal{L}_1^X) < 0$. We argue by contradiction. If $\omega_0(\mathcal{L}_1^X) = \kappa  \geq 0$, then $r(S^X_1(\tau)) = \sup \{|\mu| : \mu \in \sigma(S^X_1(\tau)) \} = e^{\kappa \tau} \geq 1$. This means, there exists $\mu \in \sigma(S^X_1(\tau)) \subseteq \sigma(S^X(\tau))$ with $|\mu| \geq 1$, implying $\mu \in \sigma(S^X(\tau)) \setminus D_1 = \{ e^{\lambda \tau} : \lambda \in K_0\}$. Hence, there exists an eigenvalue $\lambda \in \C$ of $\mathcal{L}^X$ with $\Re(\lambda) \geq 0$ satisfying $\mu = e^{\lambda \tau}$ and $\mu$ is an eigenvalue of $S_1^X(\tau)$.\\
Since $\sigma_p(S^X_1(\tau)) \setminus \{ 0 \} = e^{\sigma_p(\mathcal{L}_1^X) \tau}$, see \cite{MR1721989}, Theorem 3.7 on p.277, there also exists $\lambda_1 \in \sigma_p(\mathcal{L}_1^X)$ with $\mu = e^{\lambda_1 \tau}$, which implies $\Re(\lambda_1) \geq 0$. But this contradicts the fact $\sigma(\mathcal{L}_1^X) = \sigma(\mathcal{L}^X) \setminus K_0$.\\
By definition of the growth bound we infer the existence of $\nu > 0$ and $C \geq 1$ such that \eqref{Semigroupdecay} holds.
\end{proof}

\section{The Nonlinear Time Evolution}\label{Sec:NonlinearEvol}
In the rest of the paper, we will only consider real-valued functions in $X_{s,k}^{\omega}$ and we use the same notation for the corresponding real subspace.
\subsection{Estimates for the nonlinearity and the initial data operator}
To get a better understanding of the nonlinearity we recall that $p$ is integer and odd. This allows us to rewrite the nonlinearity as
\begin{align} \label{Rewrite}
[\mathcal{N}(f)](y) = \sum_{n=2}^p \binom{p}{n} \phi(|y|)^{p-n} f(y)^n -c |y|^2 \sum_{n=2}^{2p-1} \binom{2p-1}{n} \phi(|y|)^{2p-1-n}f(y)^n.
\end{align} 
To handle the nonlinearity in $X_{s,k}^{\omega}$, it will be necessary to estimate both products of functions and weighted products of functions with weight $|\cdot|^2$. Since the first is easily done by using the Banach algebra property of $X_{s,k}^{\omega}$ given by Lemma \ref{LemmaEmbedding}, \eqref{Multi1}, the second is far more challenging. The main task is to find suitable estimates in $X_{s,k}^{\omega}$ for terms of the form 
\begin{align} \label{Terms}
|\cdot|^2 \prod_{l=1}^{2p-1} f_l, \quad f_l \in X_{s,k}^{\omega}.
\end{align}
As it turns out, the lower Sobolev exponent $s$ is not an integer, making direct estimates of the $\Vert \cdot \Vert_{\dot{H}^{s,1}_{\omega}}$-norm difficult. To overcome this issue we work in the larger function space $\dot{H}^{\lfloor s \rfloor,1}_{\omega}(\R^3) \cap \dot{H}^{k,1}_{\omega}(\R^3)$ that embeds continuously into $X_{s,k}^{\omega}$ by interpolation and allows for exploiting the equivalence or norms
\begin{align}
\Vert f \Vert^2_{\dot{H}^{m,1}_{\omega}} \simeq \sum_{\substack{\alpha \in \N_0^3 \\ |\alpha|=m}} \left( \Vert \partial^{\alpha} f \Vert^2_{L^2(\R^3)} + \sum_{i<j}  \Vert \partial^{\alpha} (\Omega_{ij} f) \Vert^2_{L^2(\R^3)} \right), \quad m \in \{ \lfloor s \rfloor, k \},
\end{align}
for $ f \in C_c^{\infty}(\R^3)$. This reduces matters to the problem of splitting all partial derivatives of \eqref{Terms} in $L^2(\R^3)$ into parts that can be estimated by a number of inequalities. The key role is played by Corollary 1.4 in \cite{MR2769870}, a generalized Strauss inequality for non-radial functions in angular Sobolev spaces. We apply it in three space-dimensions in the form
\begin{align*}
\sup_{x \in \R^3} ||x|^{\frac{3}{2}-t} f(x)| \lesssim \Vert f \Vert_{\dot{H}^{t,1}_{\omega}},
\end{align*}
for Schwartz functions $f \in \mathcal{S}(\R^3)$, where $t$ satisfies $t \in \left( \frac{1}{2}, \frac{3}{2} \right)$. We also use Hardy's inequality
\begin{align*}
\Vert |\cdot|^{-t} f \Vert_{L^2(\R^3)} \lesssim \Vert f \Vert_{\dot{H}^t}, \quad 0 \leq t < \frac{3}{2},
\end{align*}
for $f \in \mathcal{S}(\R^3)$, see \cite{MusSch13}, Theorem 9.5, p. 243. Furthermore, we make extensive use of the $L^{\infty}(\R^3)$-embedding for (angular) intersection Sobolev spaces given by Lemma \ref{LemmaEmbedding} and interpolation of Sobolev norms 
\begin{align*}
\Vert f \Vert_{\dot{H}^{s_3,1}_{\omega}} \lesssim \Vert f \Vert_{\dot{H}^{s_1,1}_{\omega} \cap \dot{H}^{s_2,1}_{\omega}},
\end{align*}
for $f \in C^{\infty}_c(\R^3)$, where $0 \leq s_1 \leq s_3 \leq s_2$. Another important property of Sobolev norms is given in the following Lemma.

\begin{lemma}
Let $\alpha \in \N_0^3$. Then we have 
\begin{align} \label{DerivativesinH}
\Vert \partial^{\alpha} f \Vert_{\dot{H}^{s,1}_{\omega}} \lesssim \Vert f \Vert_{\dot{H}^{s+|\alpha|,1}_{\omega}} ,
\end{align}
for all $f \in C^{\infty}_c(\R^3)$.
\end{lemma}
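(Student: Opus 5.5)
We will prove \eqref{DerivativesinH} directly on the Fourier side, using the definition \eqref{InnerProduct} of the $\dot{H}^s$ inner product. The norm is $\Vert f \Vert^2_{\dot{H}^{s,1}_{\omega}} = \Vert f \Vert^2_{\dot{H}^s} + \sum_{i<j} \Vert \Omega_{ij} f\Vert^2_{\dot{H}^s}$, so it suffices to control each piece separately. The plan is: first treat the plain $\dot{H}^s$ part, then reduce the angular part to the plain part by commuting $\Omega_{ij}$ past $\partial^\alpha$.

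For the plain part, let $f \in C^\infty_c(\R^3)$. We have $\mathcal{F}(\partial^\alpha f)(\xi) = (i\xi)^\alpha \hat f(\xi)$ and $|\xi^\alpha| \le |\xi|^{|\alpha|}$, hence
\begin{align*}
\Vert \partial^\alpha f\Vert_{\dot{H}^s}^2 = \int_{\R^3} |\xi|^{2s} |\xi^\alpha|^2 |\hat f(\xi)|^2 \, d\xi \le \int_{\R^3} |\xi|^{2(s+|\alpha|)} |\hat f(\xi)|^2 \, d\xi = \Vert f \Vert^2_{\dot{H}^{s+|\alpha|}}.
\end{align*}
This holds with constant one.

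For the angular part, $\Omega_{ij}$ does not commute with $\partial^\alpha$. Instead one has the commutator relation
\begin{align*}
[\partial_m, \Omega_{ij}] = \delta_{mi}\partial_j - \delta_{mj}\partial_i.
\end{align*}
Iterating it, $\Omega_{ij}\partial^\alpha f = \partial^\alpha \Omega_{ij} f + \sum_{\beta} c_\beta \partial^\beta f$, where the sum runs over finitely many multi-indices with $|\beta| = |\alpha|$ and the constants $c_\beta$ depend only on $\alpha$. This follows by induction on $|\alpha|$: each commutation replaces one derivative by another derivative, so the order is preserved. Applying the plain estimate to both $\Omega_{ij} f$ and $f$ then gives
\begin{align*}
\Vert \Omega_{ij}\partial^\alpha f\Vert_{\dot{H}^s} \lesssim \Vert \Omega_{ij} f\Vert_{\dot{H}^{s+|\alpha|}} + \Vert f\Vert_{\dot{H}^{s+|\alpha|}}.
\end{align*}
Summing over $i<j$ and adding the plain part yields \eqref{DerivativesinH}.

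An alternative route to the angular part is to use that $\Omega_{ij}$ commutes with the Fourier transform, acting as the same rotation generator in $\xi$. However, the commutator route is cleaner since it avoids that bookkeeping. The only step requiring care is the induction for the commutator identity, and even that is straightforward.
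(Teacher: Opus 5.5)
Your proposal is correct and follows the paper's proof essentially step for step: the Fourier-side bound $|\xi^\alpha|\le|\xi|^{|\alpha|}$ for the plain $\dot{H}^s$ part, and commuting $\Omega_{ij}$ past $\partial^\alpha$ for the angular part. The paper writes the commutator out explicitly as $\Omega_{ij}\partial^{\alpha}f=\partial^{\alpha}\Omega_{ij}f-\alpha_i\partial^{\alpha-e_i+e_j}f+\alpha_j\partial^{\alpha-e_j+e_i}f$, which is exactly what your induction produces.
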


\begin{proof}
Let $f \in C^{\infty}_c(\R^3)$ and $\alpha \in \N_0^3$, then we have
\begin{align*}
\Vert \partial^{\alpha} f \Vert_{\dot{H}^{s,1}_{\omega}}^2 &= \Vert \partial^{\alpha} f \Vert_{\dot{H}^{s}}^2 + \sum_{i < j} \Vert \Omega_{ij} \partial^{\alpha} f \Vert_{\dot{H}^{s}}^2,
\end{align*}
where 
\begin{align*}
\Vert \partial^{\alpha} f \Vert_{\dot{H}^{s}} =\Vert |\cdot|^s \mathcal{F}(\partial^{\alpha} f) \Vert_{L^2} \lesssim \Vert |\cdot|^{s+|\alpha|} \mathcal{F}f \Vert_{L^2} = \Vert  f \Vert_{\dot{H}^{s+|\alpha|}}.
\end{align*}
For the second term we first observe
\begin{align*}
\Omega_{ij} \partial^{\alpha} f = \partial^{\alpha} \Omega_{ij} f - \alpha_i \partial^{\alpha -e_i + e_j} f + \alpha_j \partial^{\alpha -e_j +e_i} f.
\end{align*}
 This implies
\begin{align*}
\Vert \Omega_{ij} \partial^{\alpha} f \Vert_{\dot{H}^{s}} \lesssim \Vert \partial^{\alpha} \Omega_{ij} f \Vert_{\dot{H}^{s}} + \Vert f \Vert_{\dot{H}^{s+|\alpha|}} \lesssim \Vert f \Vert_{\dot{H}^{s+|\alpha|,1}_{\omega}},
\end{align*}
which shows \eqref{DerivativesinH}.

\end{proof}

The following lemma provides the key nonlinear estimate; it shows how the additional angular regularity assumed in the functional framework tames the growing weight appearing in the nonlinear operator \eqref{N}. For the statement, we recall the assumption on $(p,s,k)$ formulated in Definition \ref{Def:Xsk}.

\begin{lemma} \label{WProductLemma}
One has
\begin{align} \label{Ineq1}
\Vert |\cdot|^2 \prod_{l=1}^{2p-1} f_l \Vert_{X_{s,k}^{\omega}} \lesssim \prod_{l=1}^{2p-1} \Vert f_l \Vert_{X_{s,k}^{\omega}}
\end{align}
for all $f_l \in X_{s,k}^{\omega}$, $l=1,...,2p-1$.
\end{lemma}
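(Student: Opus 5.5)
We will prove \eqref{Ineq1} for $f_l \in \mathcal{S}(\R^3)$ and then conclude by density. Following the strategy described before the lemma, we replace the $X_{s,k}^{\omega}$-norm on the left by the stronger norm of $\dot{H}^{\lfloor s \rfloor,1}_{\omega} \cap \dot{H}^{k,1}_{\omega}$, which embeds continuously into $X_{s,k}^{\omega}$ by interpolation. By the equivalence of norms for integer exponents, it then suffices to bound
\[
\big\Vert \partial^{\alpha}\big(|\cdot|^2 F\big)\big\Vert_{L^2} \quad\text{and}\quad \big\Vert \partial^{\alpha}\Omega_{ij}\big(|\cdot|^2 F\big)\big\Vert_{L^2}, \qquad F=\prod_{l=1}^{2p-1} f_l,
\]
for $|\alpha| \in \{\lfloor s\rfloor, k\}$. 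Since $\Omega_{ij}$ annihilates radial functions, we have $\Omega_{ij}(|\cdot|^2F)=|\cdot|^2\,\Omega_{ij}F$. By the Leibniz rule, $\Omega_{ij}F$ is a sum of products in which exactly one factor is replaced by $\Omega_{ij}f_l$. The angular terms therefore have the same structure as the non-angular ones, with one factor $f_l$ replaced by $\Omega_{ij} f_l$.

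Next we distribute $\partial^\alpha$ over $|x|^2$ and the product. Derivatives falling on $|x|^2$ produce $|x|$ or constants, so every term has the form $x^\beta\prod_l \partial^{\alpha_l} g_l$ with $|\beta|\le 2$ and $g_l \in \{f_l,\Omega_{ij}f_l\}$. For the angular terms, at most one factor carries $\Omega_{ij}$. The key idea is to absorb the weight $|x|^2$ using a few factors that carry no angular operator. The generalized Strauss inequality gives
\[
|x|^{\frac32-t}|g(x)|\lesssim \Vert g\Vert_{\dot H^{t,1}_\omega}, \qquad t\in\big(\tfrac12,\tfrac32\big).
\]
Choosing $t=\frac12+\delta$ yields $|x|^{1-\delta}|f_l(x)|\lesssim \Vert f_l\Vert_{X^\omega_{s,k}}$, provided $t$ lies in $[s,k]$, so that interpolation applies. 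For $p\ge5$ we have $s>1$, so we would take $t=s$ and get decay $|x|^{3/2-s}=|x|^{2/(p-1)-\varepsilon}$ per factor. Each factor then absorbs a power $\frac{2}{p-1}-\varepsilon$ of $|x|$, and $p-1$ such factors absorb about $2-(p-1)\varepsilon$. The missing power $(p-1)\varepsilon$ is recovered either from Hardy's inequality applied to the factor carrying the derivatives or from one further factor. We have $2p-1$ factors available, which gives ample room. The smallness condition $\varepsilon<\frac{4}{(p-1)(3p-1)}$ should be exactly what makes this exponent count close: roughly, $(2p-2)$ or so factors absorb $|x|^{2}$, with the remaining slack controlled by Hardy.

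Concretely, in each term we put the factor carrying the most derivatives (or the $\Omega_{ij}$ factor) in $L^2$, possibly with a negative power of $|x|$ handled by Hardy's inequality in $\dot H^{t}$ via \eqref{DerivativesinH}. The lower-order derivative factors go in $L^\infty$ via Lemma \ref{LemmaEmbedding} and \eqref{DerivativesinH}; here $k=4$ gives enough room for $p\ge5$, and $k=2$ suffices for $p=3$. The remaining undifferentiated factors are used in weighted $L^\infty$ via Strauss to absorb $|x|^{2-|\beta'|}$. Near the origin the weight is harmless, so the weighted estimates are only needed for $|x|\ge1$, where we can use any lower power of $|x|$.

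We expect the main obstacle to be the low-regularity piece $|\alpha|=\lfloor s\rfloor$. For $p=3$ this means bounding $\Vert |x|^2F\Vert_{L^2}$ itself, with no derivatives to exploit. Here we would write
\[
\Vert |x|^2F\Vert_{L^2}\le \Vert |x|^{-s}f_1\Vert_{L^2}\prod_{l\ge2}\Vert |x|^{\theta_l}f_l\Vert_{L^\infty},
\qquad \sum_{l\ge2}\theta_l=2+s,
\]
with Hardy on $f_1$ and Strauss on the rest. This requires each $\theta_l\le \frac32-t_l$ with $t_l\in[s,\frac32)$, that is, $(2p-2)(\frac32-s)\ge 2+s$. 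Substituting $s=\frac32-\frac2{p-1}+\varepsilon$ turns this into an inequality on $\varepsilon$, which we expect to match the stated bound. Verifying this bookkeeping, and the analogous bookkeeping for every derivative distribution (especially when $\Omega_{ij}$ falls on a factor needed for Strauss, which we avoid by always having enough other factors), is the main technical work.
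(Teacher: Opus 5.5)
Your overall strategy matches the paper's: pass to $\dot{H}^{\lfloor s\rfloor,1}_{\omega}\cap\dot{H}^{k,1}_{\omega}$, apply Leibniz, put one factor in $L^2$ (with Hardy if needed), bound the rest via the Strauss inequality or the $L^\infty$ embedding, and split $|x|\le 1$ from $|x|\ge 1$. However, your concrete allocation rule does not close. That rule sends first-order derivative factors to \emph{unweighted} $L^\infty$ and reserves the Strauss inequality for undifferentiated factors.

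For $p=3$ the rule fails outright. With $k=2$ you only control $\partial_m f$ in $L^2\cap\dot{H}^1=H^1(\R^3)$, which does not embed into $L^\infty(\R^3)$, so your remark that ``$k=2$ suffices for $p=3$'' is false. Terms from $|\alpha|=2$ such as $|x|^2(\Omega_{ij}f_1)(\partial_m f_2)(\partial_n f_3)f_4f_5$, or $|x|^2(\partial_m\Omega_{ij}f_1)(\partial_n f_2)f_3f_4f_5$, then have a first-derivative factor with nowhere to go.

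For $p=5$ the rule fails by counting. Take $|x|^2(\Omega_{ij}f_1)\,\partial f_2\,\partial f_3\,\partial f_4\,\partial f_5\,f_6\cdots f_9$ from $|\alpha|=4$. Since $s=1+\varepsilon>1$, the $L^2$ factor $\partial f_2$ needs Hardy with $|x|^{-(s-1)}$, so you must absorb $|x|^{2+\varepsilon}$ at infinity. The factor $\Omega_{ij}f_1$ cannot carry a weight, and your rule leaves only $f_6,\dots,f_9$. These give at most $|x|^{4(\frac12-\varepsilon)}=|x|^{2-4\varepsilon}$.

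The missing idea, which the paper uses repeatedly, is to apply the Strauss inequality to differentiated factors too. By \eqref{DerivativesinH},
\[
\sup_{x}|x|^{\frac32-s}|\partial_m f(x)|\lesssim \Vert \partial_m f\Vert_{\dot{H}^{s,1}_{\omega}}\lesssim \Vert f\Vert_{\dot{H}^{s+1,1}_{\omega}},
\]
which is admissible because $s+1\le k$ in both regimes. For $p=3$ this gives $\Vert |x|^{1-\varepsilon}\partial_n f_3\Vert_{L^\infty}$, and the remaining $|x|^{1+\varepsilon}f_4f_5$ is handled by two undifferentiated factors. For $p=5$ it makes the seven factors $\partial f_3,\partial f_4,\partial f_5,f_6,\dots,f_9$ available for absorbing the weight.

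Your bookkeeping is also aimed at the wrong place. The low-order piece is harmless, and for $p\ge5$ there is no $|\alpha|=0$ term at all. Your condition $(2p-2)(\tfrac32-s)\ge 2+s$ amounts to $\varepsilon\le\frac{p+3}{2(p-1)(2p-1)}$, which is implied by the hypothesis but does not match it. The paper's bound $\varepsilon<\frac{4}{(p-1)(3p-1)}$ comes from replacing $|x|^{s+1}$ by $|x|^3$ on $|x|\ge1$. It then spreads this over $\frac{3(p-1)}{2}$ factors weighted by $|x|^{\frac{2}{p-1}-\varepsilon}$ plus one unweighted factor, which requires $\frac{3(p-1)}{2}\varepsilon\le\frac{2}{p-1}-\varepsilon$. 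The top-order term above is exactly where this count is tight: at $p=5$ one has $2p-3=\frac{3p-1}{2}=7$.
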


\begin{proof}
Recall that the lower index of our Sobolev space $X_{s,k}^{\omega}$ is given by $s=\frac{3}{2} - \frac{2}{p-1} + \varepsilon$ for some small $\varepsilon >0$. For $p=3$ we find $s = \frac{1}{2} + \varepsilon < 1$ for small enough $\varepsilon >0$. For $p \geq 5$ we have $s = \frac{3}{2} - \frac{2}{p-1} + \varepsilon \geq 1 + \varepsilon$. Instead of proving \eqref{Ineq1} directly we show
\begin{align*}
\Vert |\cdot|^2 \prod_{l=1}^{2p-1} f_l \Vert_{\dot{H}^{ \lfloor s \rfloor ,1}_{\omega} \cap \dot{H}^{k,1}_{\omega}} \lesssim \prod_{l=1}^{2p-1} \Vert f_l \Vert_{X_{s,k}^{\omega}},
\end{align*}
for all $f_l \in X_{s,k}^{\omega}$, $l=1,...,2p-1$. The embedding $\dot{H}^{ \lfloor s \rfloor ,1}_{\omega} \cap \dot{H}^{k,1}_{\omega}(\R^3) \hookrightarrow X_{s,k}^{\omega}$ then implies \eqref{Ineq1}. Furthermore, by density and the $L^{\infty}$-embedding it suffices to show the inequality only for functions belonging to $C^{\infty}_c(\R^3)$.\\
We treat the case $p=3$ as a special case and start the proof with $p \geq 5$. In this case we have $\lfloor s \rfloor = 1$ and $k = 4$. Let $f_l \in C^{\infty}_c(\R^3)$, $l=1,...,2p-1$. By definition of the underlying Sobolev norm we have to estimate terms of the following form in $L^2(\R^3)$
\begin{align*}
\partial^{\alpha} \left( |\cdot|^2 \prod_{l=1}^{2p-1} f_l \right) \quad \text{and} \quad \partial^{\alpha} \left( |\cdot|^2 (\Omega_{ij} f_1) \prod_{l=2}^{2p-1} f_l \right), \quad \text{for} \ \alpha \in \N_0^3 \ \text{with} \ |\alpha| \in \{ 1, 4 \}.
\end{align*}
We only show how to estimate the terms containing angular derivatives $\Omega_{ij}$, as those without (angular derivatives) follow by the same arguments. For $|\alpha| =1$ it suffices to find estimates in $L^2(\R^3)$ for
\begin{align*}
|\cdot| (\Omega_{ij} f_1) \prod_{l=2}^{2p-1} f_l \quad , \quad |\cdot|^2 (\partial_m \Omega_{ij} f_1) \prod_{l=2}^{2p-1} f_l \quad \text{and} \quad |\cdot|^2 (\Omega_{ij} f_1) (\partial_m f_2) \prod_{l=3}^{2p-1} f_l,
\end{align*}
for some $m \in \{ 1,2,3 \}$. We apply Hardy's inequality and infer for the first term
\begin{align*}
\Vert | \cdot | (\Omega_{ij} f_1) \prod_{l=2}^{2p-1} f_l \Vert_{L^2} &\leq \Vert |\cdot|^{-s} \Omega_{ij} f_1 \Vert_{L^2} \Vert |\cdot|^{s+1}  \prod_{l=2}^{2p-1} f_l \Vert_{L^{\infty}}\\
&\lesssim \Vert f_1 \Vert_{\dot{H}^{s,1}_{\omega}} \left( \prod_{l=2}^{2p-1} \Vert f_l \Vert_{L^{\infty}(B_1)} + \Vert |\cdot|^{s+1}  \prod_{l=2}^{2p-1} f_l \Vert_{L^{\infty}(B_1^{c})} \right).
\end{align*}
By the $L^{\infty}$-embedding we find
\begin{align*}
\prod_{l=2}^{2p-1} \Vert f_l \Vert_{L^{\infty}(B_1)} \lesssim \prod_{l=2}^{2p-1} \Vert f_l \Vert_{X_{s,4}^{\omega}}.
\end{align*}
To estimate the weighted product in $L^{\infty}(B_1^c)$ we use that $|x|^{s+1} \leq |x|^3$ for all $x \in \R^3 \setminus B_1$ and do the following
\begin{align} \label{3}
\Vert |\cdot|^{3}  \prod_{l=2}^{2p-1} f_l \Vert_{L^{\infty}(B_1^{c})} &= \Vert \left( |\cdot|^{\frac{2}{p-1}-\varepsilon} \right)^{3 \frac{(p-1)}{2}} |\cdot|^{3 \frac{(p-1)}{2} \varepsilon} \prod_{l=2}^{2p-1} f_l \Vert_{L^{\infty}(B_1^{c})} \nonumber \\ 
&\leq \prod_{l=2}^{\frac{3p+1}{2}} \Vert |\cdot|^{\frac{2}{p-1}-\varepsilon} f_l  \Vert_{L^{\infty}(B_1^{c})} \prod_{n=\frac{3p+3}{2}}^{2p-1} \Vert f_n \Vert_{L^{\infty}(B_1^{c})} \lesssim \prod_{l=2}^{2p-1} \Vert f_l \Vert_{X_{s,4}^{\omega}},
\end{align}
where we applied Corollary 1.4 from \cite{MR2769870}, and exploited that $3 \frac{(p-1)}{2} \varepsilon \leq \frac{2}{p-1}-\varepsilon$. For the second term we use a similar argument
\begin{align*}
\Vert |\cdot|^2 (\partial_m \Omega_{ij} f_1) \prod_{l=2}^{2p-1} f_l \Vert_{L^2} &\leq \Vert |\cdot|^{-(s-1)} \partial_m \Omega_{ij} f_1 \Vert_{L^2} \Vert |\cdot|^{s+1}  \prod_{l=2}^{2p-1} f_l \Vert_{L^{\infty}}\lesssim \Vert f_1 \Vert_{\dot{H}^{s,1}_{\omega}} \prod_{l=2}^{2p-1} \Vert f_l \Vert_{X_{s,4}^{\omega}}.
\end{align*}
For the last term we have
\begin{align*}
\Vert |\cdot|^2 (\Omega_{ij} f_1) (\partial_m f_2) \prod_{l=3}^{2p-1} f_l \Vert_{L^2} &\leq \Vert|\cdot|^{-(s-1)} \partial_m f_2 \Vert_{L^2} \Vert |\cdot|^{s+1} (\Omega_{ij} f_1) \prod_{l=3}^{2p-1} f_l \Vert_{L^{\infty}}\\
&\lesssim \Vert f_2 \Vert_{\dot{H}^s} \left( \Vert \Omega_{ij} f_1 \Vert_{L^{\infty}(B_1)} \prod_{l=3}^{2p-1} \Vert f_l \Vert_{L^{\infty}(B_1)}\right)\\
&+ \Vert f_2 \Vert_{\dot{H}^s} \left( \Vert |\cdot|^{3} (\Omega_{ij} f_1) \prod_{l=3}^{2p-1} f_l \Vert_{L^{\infty}(B_1^c)} \right).
\end{align*}
Again, we can exploit the $L^{\infty}$-embedding and find
\begin{align*}
 \Vert \Omega_{ij} f_1 \Vert_{L^{\infty}(B_1)} \prod_{l=3}^{2p-1} \Vert f_l \Vert_{L^{\infty}(B_1)} \lesssim \Vert f_1 \Vert_{X_{s,4}^{\omega}} \prod_{l=3}^{2p-1} \Vert f_l \Vert_{X_{s,4}^{\omega}}.
\end{align*}
For the remaining part we do the same as in \eqref{3} where we arrange the product in such a way that $\Omega_{ij} f_1$ gets measured in $L^{\infty}(\R^3)$ without a weight
\begin{align*}
\Vert |\cdot|^{3} (\Omega_{ij} f_1) \prod_{l=3}^{2p-1} f_l \Vert_{L^{\infty}(B_1^c)} &\leq \Vert \Omega_{ij} f_1 \Vert_{L^{\infty}(B_1^c)} \prod_{l=3}^{\frac{3p+3}{2}} \Vert |\cdot|^{\frac{2}{p-1}-\varepsilon} f_l  \Vert_{L^{\infty}(B_1^{c})} \prod_{n=\frac{3p+5}{2}}^{2p-1} \Vert f_n \Vert_{L^{\infty}(B_1^{c})}\\
&\lesssim \Vert f_1 \Vert_{X_{s,4}^{\omega}} \prod_{l=3}^{2p-1} \Vert f_l \Vert_{X_{s,4}^{\omega}},
\end{align*}
where the product $\prod_{n=\frac{3p+5}{2}}^{2p-1} \Vert f_n \Vert_{L^{\infty}(B_1^{c})}$ only occurs if $p \geq 7$.\\
\\
For $|\alpha| =4$ the following terms arise
\begin{align} \label{Subcases}
\partial^{\beta} \left( (\Omega_{ij} f_1) \prod_{l=2}^{2p-1} f_l \right) , \ |\cdot| \partial^{\gamma} \left( (\Omega_{ij} f_1) \prod_{l=2}^{2p-1} f_l \right) \ \text{and} \ |\cdot|^2 \partial^{\sigma} \left( (\Omega_{ij} f_1) \prod_{l=2}^{2p-1} f_l \right),
\end{align}
where $\beta, \gamma, \sigma \in \N^3_0$ with $|\beta|=2$, $|\gamma|=3$ and $|\sigma|=4$. The first term can be estimated by
\begin{align*}
\Vert \partial^{\beta} \left( (\Omega_{ij} f_1) \prod_{l=2}^{2p-1} f_l \right) \Vert_{L^2} &  \leq \Vert \Omega_{ij} f_1 \prod_{l=2}^{2p-1} f_l \Vert_{\dot{H}^2}\lesssim \Vert \Omega_{ij} f_1 \prod_{l=2}^{2p-1} f_l \Vert_{\dot{H}^s \cap \dot{H}^4}\\
&\lesssim \Vert \Omega_{ij} f_1 \Vert_{\dot{H}^s \cap \dot{H}^4} \prod_{l=2}^{2p-1} \Vert f_l \Vert_{\dot{H}^s \cap \dot{H}^4} \lesssim \prod_{l=1}^{2p-1} \Vert f_l \Vert_{X_{s,4}^{\omega}}.
\end{align*}
For the second term let $\gamma_1, \gamma_2 \in \N_0^3$ with $|\gamma_1|+|\gamma_2| =|\gamma|=3$. By  the Leibniz rule we have to find suitable estimates in $L^2(\R^3)$ for 
\begin{align*}
|\cdot| (\partial^{\gamma_1} \Omega_{ij} f_1) \partial^{\gamma_2} \left( \prod_{l=2}^{2p-1} f_l \right).
\end{align*}
To do so we need to distinguish three possible choices of $\gamma_1$ and $\gamma_2$. If $|\gamma_1| \in \{ 2,3 \}$ and $|\gamma_2| \in \{ 0,1 \}$ we have
\begin{align*}
\Vert |\cdot| (\partial^{\gamma_1} \Omega_{ij} f_1) \partial^{\gamma_2} \left( \prod_{l=2}^{2p-1} f_l \right) \Vert_{L^2} &\leq \Vert \partial^{\gamma_1} \Omega_{ij} f_1 \Vert_{L^2} \Vert |\cdot| (\partial^{\gamma_2} f_2) \prod_{l=3}^{2p-1} f_l \Vert_{L^{\infty}},
\end{align*}
where $\Vert \partial^{\gamma_1} \Omega_{ij} f_1 \Vert_{L^2} \lesssim \Vert \Omega_{ij} f_1 \Vert_{\dot{H}^{|\gamma_1|}} \lesssim \Vert \Omega_{ij} f_1 \Vert_{\dot{H}^{s} \cap \dot{H}^4} \lesssim \Vert f_1 \Vert_{X_{s,4}^{\omega}}$ and
\begin{align*}
\Vert |\cdot| (\partial^{\gamma_2} f_2) \prod_{l=3}^{2p-1} f_l \Vert_{L^{\infty}} &\leq \Vert (\partial^{\gamma_2} f_2) \prod_{l=3}^{2p-1} f_l \Vert_{L^{\infty}(B_1)} +  \Vert |\cdot|^3 (\partial^{\gamma_2} f_2) \prod_{l=3}^{2p-1} f_l \Vert_{L^{\infty}(B_1^c)}.
\end{align*}
We estimate the terms in $L^{\infty}(B_1)$ by
\begin{align*}
\Vert (\partial^{\gamma_2} f_2) \prod_{l=3}^{2p-1} f_l \Vert_{L^{\infty}(B_1)} &\leq \Vert \partial^{\gamma_2} f_2 \Vert_{\dot{H}^{s-|\gamma_2|} \cap \dot{H}^{4-|\gamma_2|}} \prod_{l=3}^{2p-1} \Vert f_l \Vert_{X_{s,4}^{\omega}}\lesssim \prod_{l=2}^{2p-1} \Vert f_l \Vert_{X_{s,4}^{\omega}}.
\end{align*}
To estimate the weighted product in $L^{\infty}(B_1^c)$ we do the same as in \eqref{3} which is possible due to $|\gamma_2| \leq 1$.\\
The next subcase arises if $|\gamma_1|=1$ and $|\gamma_2| =2$. Again, we apply the Leibniz rule for $\partial^{\gamma_2}$ with multiindices $\gamma_2^1, \gamma_2^2 \in \N_0^3$ such that $|\gamma_2^1|+|\gamma_2^2|=|\gamma_2|$. By this we get
\begin{align*}
&\Vert |\cdot|^{-(s-1)} \partial^{\gamma_1} \Omega_{ij} f_1 \Vert_{L^{2}} \Vert \partial^{\gamma_2^1} f_2 \Vert_{L^{\infty}} \Vert \partial^{\gamma_2^2} f_3 \Vert_{L^{\infty}} \Vert |\cdot|^s \prod_{l=4}^{2p-1} f_l \Vert_{L^{\infty}} \lesssim \prod_{l=1}^{2p-1} \Vert f_l \Vert_{X_{s,4}^{\omega}},
\end{align*}
where we used that 
\begin{align} \label{4}
\Vert |\cdot|^s \prod_{l=4}^{2p-1} f_l \Vert_{L^{\infty}} &\lesssim 
\prod_{l=4}^{2p-1} \Vert f_l \Vert_{L^{\infty}(B_1)} + \Vert \left( |\cdot|^{\frac{2}{p-1}-\varepsilon} \right)^{p-1} |\cdot|^{(p-1) \varepsilon} \prod_{l=4}^{2p-1} f_l \Vert_{L^{\infty}(B_1^c)} \nonumber \\
&\lesssim \prod_{l=4}^{2p-1} \Vert f_l \Vert_{X_{s,4}^{\omega}} + \prod_{l=4}^{p+3} \Vert |\cdot|^{\frac{2}{p-1}-\varepsilon} f_l \Vert_{L^{\infty}(B_1^c)} \prod_{n=p+4}^{2p-1} \Vert f_l \Vert_{L^{\infty}(B_1^c)} \nonumber \\
&\lesssim \prod_{l=4}^{2p-1} \Vert f_l \Vert_{X_{s,4}^{\omega}},
\end{align}
by exploiting $\frac{p-1}{2}\varepsilon \leq \frac{2}{p-1}-\varepsilon$. If $|\gamma_1|=0$ and $|\gamma_2|=3$ we use the Leibniz rule for the partial derivative $\partial^{\gamma_2}$ with multiindices $\gamma_2^1, \gamma_2^2, \gamma_2^3 \in \N_0^3$ such that $|\gamma_2^1|+|\gamma_2^2|+|\gamma_2^3|=|\gamma_2|$. The worst case appears if $|\gamma_2^1|=|\gamma_2^2|=|\gamma_2^3|=1$, so it suffices to treat this case. We split the term as
\begin{align*}
&\Vert \Omega_{ij} f_1 \Vert_{L^{\infty}} \Vert |\cdot|^{-(s-1)} \partial^{\gamma_2^1} f_2 \Vert_{L^2} \Vert |\cdot|^s (\partial^{\gamma_2^2} f_3) (\partial^{\gamma_2^3} f_4) \prod_{l=5}^{2p-1} f_l \Vert_{L^{\infty}}\\ &\lesssim \Vert f_1 \Vert_{X_{s,4}^{\omega}} \Vert f_2 \Vert_{\dot{H}^s} \prod_{l=3}^{2p-1} \Vert f_l \Vert_{X_{s,4}^{\omega}}\lesssim \prod_{l=1}^{2p-1} \Vert f_l \Vert_{X_{s,4}^{\omega}},
\end{align*}
where we used the same proof as in \eqref{4} for the product appearing here.
\\
The last case of \eqref{Subcases} is the most challenging as it contains the strongest weight and the highest order of derivatives. Again, we use the Leibniz rule with $|\sigma|= |\sigma_1|+|\sigma_2|$, $\sigma_1, \sigma_2 \in \N_0^3$ such that the product splits into products of the form
\begin{align*}
|\cdot|^2 (\partial^{\sigma_1} \Omega_{ij} f_1) \partial^{\sigma_2} \left( \prod_{l=2}^{2p-1} f_l \right).
\end{align*}
We distinguish the cases $|\sigma_1| \geq 2$ and $|\sigma_1| \in \{ 0,1 \}$. If $|\sigma_1| \geq 2$ and $|\sigma_2| \leq 2$ we use the Leibniz rule for $\partial^{\sigma_2}$ with multiindices $\sigma_2^1, \sigma_2^2 \in \N_0^3$ such that $|\sigma_2^1|+|\sigma_2^2|=|\sigma_2|$. With that at hand we split the term in $L^2(\R^3)$ in the following way
\begin{align*}
&\Vert \partial^{\sigma_1} \Omega_{ij} f_1 \Vert_{L^2}  \Vert \partial^{\sigma_2^1} f_2 \Vert_{L^{\infty}} \Vert \partial^{\sigma_2^2} f_3 \Vert_{L^{\infty}} \Vert |\cdot|^2 \prod_{l=4}^{2p-1} f_l \Vert_{L^{\infty}}\\
&\lesssim \Vert \Omega_{ij} f_1 \Vert_{\dot{H}^{|\sigma_1|}} \Vert f_2 \Vert_{X_{s,4}^{\omega}} \Vert f_3 \Vert_{X_{s,4}^{\omega}} \left( \prod_{l=4}^{2p-1} \Vert f_l \Vert_{L^{\infty}(B_1)} + \Vert |\cdot|^2 \prod_{l=4}^{2p-1} f_l \Vert_{L^{\infty}(B_1^c)} \right),
\end{align*}
where we use a similar argument as in \eqref{4}. Namely,
\begin{align} \label{5}
\begin{split}
\Vert |\cdot|^2  & \prod_{l=4}^{2p-1} f_l \Vert_{L^{\infty}(B_1^c)} = \Vert \left( |\cdot|^{\frac{2}{p-1}-\varepsilon} \right)^{p-1} |\cdot|^{(p-1)\varepsilon} \prod_{l=4}^{2p-1} f_l \Vert_{L^{\infty}(B_1^c)}  \\
&\lesssim \prod_{l=4}^{p+3} \Vert |\cdot|^{\frac{2}{p-1}-\varepsilon} f_l \Vert_{L^{\infty}(B_1^c)} \prod_{n=p+4}^{2p-1} \Vert f_n \Vert_{L^{\infty}(B_1^c)} \lesssim \prod_{l=4}^{2p-1} \Vert f_l \Vert_{X_{s,4}^{\omega}}
\end{split}
\end{align}
where we exploited $(p-1)\varepsilon \leq \frac{2}{p-1}-\varepsilon$. The next possible choice of $\sigma_1$ and $\sigma_2$ is given by $|\sigma_1| =1$ and $|\sigma_2|=3$. Again, we apply the Leibniz rule to the partial derivative $\partial^{\sigma_2}$ with multiindices $\sigma_2^1, \sigma_2^2, \sigma_2^3 \in \N_0^3$ such that $|\sigma_2^1|+|\sigma_2^2|+|\sigma_2^3| =|\sigma_2|$. Let $\max_{n = 1,2,3} |\sigma_2^n| = |\sigma_2^1|$. If $|\sigma_2^1| \geq 2$ we have
\begin{align*}
&\Vert \partial^{\sigma_2^1} f_2 \Vert_{L^2} \Vert \partial^{\sigma_1} \Omega_{ij} f_1 \Vert_{L^{\infty}} \Vert |\cdot|^2 (\partial^{\sigma_2^2} f_3)(\partial^{\sigma_2^3} f_4) \prod_{l=5}^{2p-1} f_l \Vert_{L^{\infty}} \\
& \lesssim \Vert f_2 \Vert_{\dot{H}^{|\sigma_2^1|}} \Vert \Omega_{ij} f_1 \Vert_{\dot{H}^{2} \cap \dot{H}^{3}} \prod_{l=3}^{2p-1} \Vert f_l \Vert_{X_{s,4}^{\omega}} \lesssim \prod_{l=1}^{2p-1} \Vert f_l \Vert_{X_{s,4}^{\omega}},
\end{align*}
where we adapted the proof of \eqref{5} to this product which is possible since $|\sigma_2^2| \leq 1$ and $|\sigma_2^3| \leq 1$. If $|\sigma_2^1| = |\sigma_2^2| = |\sigma_2^3| =1$ we do the following
\begin{align*}
&\Vert |\cdot|^{-(s-1)} \partial^{\sigma_1} \Omega_{ij} f_1 \Vert_{L^2} \Vert |\cdot|^{s+1} (\partial^{\sigma_2^1} f_2) (\partial^{\sigma_2^2} f_3) (\partial^{\sigma_2^3} f_4) \prod_{l=5}^{2p-1} f_l \Vert_{L^{\infty}}\\
&\lesssim \Vert \Omega_{ij} f_1 \Vert_{\dot{H}^s}\\
&\cdot \left( \Vert (\partial^{\sigma_2^1} f_2) (\partial^{\sigma_2^2} f_3) (\partial^{\sigma_2^3} f_4) \prod_{l=5}^{2p-1} f_l \Vert_{L^{\infty}(B_1)} + \Vert |\cdot|^3 (\partial^{\sigma_2^1} f_2) (\partial^{\sigma_2^2} f_3) (\partial^{\sigma_2^3} f_4) \prod_{l=5}^{2p-1} f_l \Vert_{L^{\infty}(B_1^c)} \right)\\
&\lesssim \prod_{l=1}^{2p-1} \Vert f_l \Vert_{X_{s,4}^{\omega}},
\end{align*}
where we used the same proof as in \eqref{3} which is possible due to the order of $\sigma_2^1,\sigma_2^2,\sigma_2^3$.\\
The remaining case of possible $\sigma_1$ and $\sigma_2$ arises if $|\sigma_1|=0$ and $|\sigma_2|=4$. To treat this case we use the Leibniz rule for $\partial^{\sigma_2}$ and consider two possible subcases. The first one appears if the partial derivative $\partial^{\sigma_2}$ distributes on at most three functions, i.e.~there exist three multiindices $\sigma_2^1, \sigma_2^2, \sigma_2^3 \in \N_0^3$ such that $|\sigma_2^1|+|\sigma_2^2|+|\sigma_2^3| =|\sigma_2|$ and $\max_{n =1,2,3} |\sigma_2^n| = |\sigma_2^1| \geq 2$. For that we split the term in $L^2(\R^3)$ in the following way
\begin{align*}
\Vert \partial^{\sigma_2^1} f_2 \Vert_{L^2} \Vert \Omega_{ij} f_1 \Vert_{L^{\infty}} \Vert |\cdot|^2 (\partial^{\sigma_2^2} f_3)(\partial^{\sigma_2^3} f_4) \prod_{l=5}^{2p-1} f_l \Vert_{L^{\infty}}  & \lesssim \Vert f_2 \Vert_{\dot{H}^{|\sigma_2^1|}} \Vert \Omega_{ij} f_1 \Vert_{\dot{H}^{s} \cap \dot{H}^{4}} \prod_{l=3}^{2p-1} \Vert f_l \Vert_{X_{s,4}^{\omega}}\\
&\lesssim \prod_{l=1}^{2p-1} \Vert f_l \Vert_{X_{s,4}^{\omega}},
\end{align*}
where the estimate for the weighted product in $L^{\infty}(\R^3)$ follows from the fact that $|\sigma_2^2| \leq 1$ and $|\sigma_2^3| \leq 1$. The other case arises if four partial derivatives $\partial_m$, $\partial_u$, $\partial_v$ and $\partial_w$ with $m,u,v,w \in \{ 1,2,3 \}$ of order one distribute on four different functions. To treat this case we split the term in $L^2(\R^3)$ into
\begin{align*}
&\Vert |\cdot|^{-(s-1)} \partial_m f_2 \Vert_{L^{\infty}} \Vert \Omega_{ij} f_1 \Vert_{L^{\infty}} \Vert |\cdot|^{s+1} \partial_u f_3 \partial_v f_4 \partial_w f_5 \prod_{l=6}^{2p-1} f_l \Vert_{L^{\infty}}\\
&\lesssim \Vert f_2 \Vert_{\dot{H}^s} \Vert f_1 \Vert_{X_{s,4}^{\omega}}\\ &\cdot \left( \Vert (\partial_u f_3) (\partial_v f_4) (\partial_w f_5) \prod_{l=6}^{2p-1} f_l \Vert_{L^{\infty}(B_1)} + \Vert |\cdot|^{3} (\partial_u f_3) (\partial_v f_4) (\partial_w f_5) \prod_{l=6}^{2p-1} f_l \Vert_{L^{\infty}(B_1^c)} \right)\\
&\lesssim \prod_{l=1}^{2p-1} \Vert f_l \Vert_{X_{s,4}^{\omega}},
\end{align*}
where we again used the proof of \eqref{3} for the weighted product appearing here.\\
\\
Let $p=3$, then we have $s=\frac{1}{2}+\varepsilon <1$, $\lfloor s \rfloor =0$ and $k=2$. Hence, we have to estimate terms in $L^2(\R^3)$ of the form
\begin{align*}
|\cdot|^2 (\Omega_{ij} f_1) \prod_{l=2}^{5} f_l \quad \text{and} \quad \partial^{\alpha} \left( |\cdot|^2 (\Omega_{ij}f_1) \prod_{l=2}^{5} f_l \right),
\end{align*}
for $\alpha \in \N_0^3$ with $|\alpha|=2$ and $i,j \in \{ 1,2,3 \}$, $i<j$. To treat the first term we use the splitting
\begin{align*}
\Vert |\cdot|^{-s} \Omega_{ij} f_1 \Vert_{L^2} \Vert |\cdot|^{s+2} \prod_{l=2}^{5} f_l \Vert_{L^{\infty}} &\lesssim \Vert f_1 \Vert_{\dot{H}^{s,1}_{\omega}} \left( \Vert \prod_{l=2}^{5} f_l \Vert_{L^{\infty}(B_1)} + \Vert |\cdot|^{3} \prod_{l=2}^{5} f_l \Vert_{L^{\infty}(B_1^c)} \right)\\
&\lesssim \Vert f_1 \Vert_{X_{s,2}^{\omega}} \left( \prod_{l=2}^{5} \Vert f_l \Vert_{X_{s,2}^{\omega}} + \prod_{l=2}^{5} \Vert |\cdot|^{1-\varepsilon} f_l \Vert_{L^{\infty}(B_1^c)} \right)\\
&\lesssim \prod_{l=1}^5 \Vert f_l \Vert_{X_{s,2}^{\omega}},
\end{align*}
where we exploited $3 \varepsilon < 1- \varepsilon$. For the second term we apply the Leibniz rule which gives the following subcases in $L^2(\R^3)$
\begin{align*}
(\Omega_{ij}f_1) \prod_{l=2}^{5} f_l , \quad |\cdot| (\partial^{\beta_1} \Omega_{ij} f_1) \partial^{\beta_2} \left( \prod_{l=2}^5 f_l \right) \quad \text{and} \quad |\cdot|^2 (\partial^{\alpha_1} \Omega_{ij} f_1) \partial^{\alpha_2} \left( \prod_{l=2}^5 f_l \right),
\end{align*}
where $\alpha_1, \alpha_2, \beta_1, \beta_2 \in \N_0^3$ with $|\alpha_1|+|\alpha_2|=2$ and $|\beta_1|+|\beta_2|=1$. For the first subcase we have
\begin{align*}
\Vert |\cdot|^{-s} \Omega_{ij} f_1 \Vert_{L^2} \Vert |\cdot|^s \prod_{l=2}^{5} f_l \Vert_{L^{\infty}} &\lesssim \Vert f_1 \Vert_{X_{s,2}^{\omega}} \left(  \Vert \prod_{l=2}^{5} f_l \Vert_{L^{\infty}(B_1)} + \Vert |\cdot| \prod_{l=2}^{5} f_l \Vert_{L^{\infty}(B_1^c)} \right)\lesssim \prod_{l=1}^5 \Vert f_l \Vert_{X_{s,2}^{\omega}},
\end{align*}
by the same argument as for the first term. For the next subcase we have the two options $|\beta_1|=1$, $|\beta_2| =0$ or $|\beta_1|=0$, $|\beta_2| = 1$. Let $|\beta_1|=1$ and $|\beta_2|=0$, then we split the term in the following way
\begin{align*}
\Vert \partial^{\beta_1} \Omega_{ij} f_1 \Vert_{L^2} \Vert |\cdot| \prod_{l=2}^5 f_l \Vert_{L^{\infty}} & \lesssim \Vert f_1 \Vert_{\dot{H}^{1,1}_{\omega}} \left(  \Vert \prod_{l=2}^{5} f_l \Vert_{L^{\infty}(B_1)} + \Vert |\cdot| \prod_{l=2}^{5} f_l \Vert_{L^{\infty}(B_1^c)} \right)\lesssim \prod_{l=1}^5 \Vert f_l \Vert_{X_{s,2}^{\omega}}.
\end{align*}
If $|\beta_1|=0$ and $|\beta_2|=1$ we get
\begin{align*}
\Vert \partial^{\beta_2} f_2 \Vert_{L^2} \Vert |\cdot| (\Omega_{ij} f_1) \prod_{l=3}^5 f_l \Vert_{L^{\infty}}  & \lesssim \Vert f_2 \Vert_{\dot{H}^{1}} \left(  \Vert (\Omega_{ij} f_1) \prod_{l=3}^{5} f_l \Vert_{L^{\infty}(B_1)} + \Vert |\cdot| (\Omega_{ij} f_1) \prod_{l=3}^{5} f_l \Vert_{L^{\infty}(B_1^c)} \right)\\&  \lesssim \prod_{l=1}^5 \Vert f_l \Vert_{X_{s,2}^{\omega}},
\end{align*}
where we used
\begin{align*}
\Vert |\cdot| (\Omega_{ij} f_1) \prod_{l=3}^{5} f_l \Vert_{L^{\infty}(B_1^c)} &\lesssim \Vert |\cdot|^2 (\Omega_{ij} f_1) \prod_{l=3}^{5} f_l \Vert_{L^{\infty}(B_1^c)} \lesssim \Vert \Omega_{ij} f_1 \Vert_{L^{\infty}} \prod_{l=3}^{5} \Vert |\cdot|^{1-\varepsilon} f_l \Vert_{L^{\infty}(B_1^c)}\\
&\lesssim \Vert f_1 \Vert_{X_{s,2}^{\omega}} \prod_{l=3}^{5} \Vert f_l \Vert_{X_{s,2}^{\omega}}.
\end{align*}
To handle the last subcase we distinguish two different choices of $\alpha_1$ and $\alpha_2$. First, let $|\alpha_1| \geq 1$ and $|\alpha_2| = 2 - |\alpha_1| \leq 1$, then we find
\begin{align*}
&\Vert \partial^{\alpha_1} \Omega_{ij} f_1 \Vert_{L^2} \Vert |\cdot|^2 (\partial^{\alpha_2} f_2) \prod_{l=3}^5 f_l \Vert_{L^{\infty}}\\ 
&\lesssim \Vert f_1 \Vert_{\dot{H}^{|\alpha_1|,1}_{\omega}} \Vert |\cdot|^{1-\varepsilon} \partial^{\alpha_2} f_2 \Vert_{L^{\infty}} \Vert |\cdot|^{1+\varepsilon} \prod_{l=3}^5 f_l \Vert_{L^{\infty}}\\
&\lesssim \prod_{n=1}^2 \Vert f_n \Vert_{X_{s,2}^{\omega}} \left( \Vert \prod_{l=3}^5 f_l \Vert_{L^{\infty}(B_1)} + \prod_{l=3}^4 \Vert |\cdot|^{1-\varepsilon} f_l \Vert_{L^{\infty}(B_1^c)} \Vert f_5 \Vert_{L^{\infty}(B_1^c)} \right)\\
&\lesssim \prod_{l=1}^5 \Vert f_l \Vert_{X_{s,2}^{\omega}}.
\end{align*}
For $|\alpha_1|=0$ and $|\alpha_2|=2$ we apply the Leibniz rule for the differential operator $\partial^{\alpha_2}$ with $\alpha_2^1, \alpha_2^2 \in \N_0^3$ such that $|\alpha_2^1|+|\alpha_2^2|=|\alpha_2|$. Let $\max_{n=1,2} |\alpha_2^n| = |\alpha_2^1|$, then we have $|\alpha_2^2| \leq 1$ and we find
\begin{align*}
&\Vert \partial^{\alpha_2^1} f_2 \Vert_{L^2} \Vert |\cdot|^2 (\Omega_{ij} f_1)(\partial^{\alpha_2^2} f_3) \prod_{l=4}^5 f_l \Vert_{L^{\infty}}\\ 
&\lesssim \Vert f_2 \Vert_{\dot{H}^{|\alpha_2^1|}} \Vert \Omega_{ij} f_1 \Vert_{L^{\infty}} \Vert |\cdot|^{1-\varepsilon} \partial^{\alpha_2^2} f_3 \Vert_{L^{\infty}} \left( \prod_{l=4}^5 \Vert f_l \Vert_{L^{\infty}(B_1)} + \prod_{l=4}^5 \Vert |\cdot|^{1-\varepsilon} f_l \Vert_{L^{\infty}(B_1^c)} \right) \lesssim \prod_{l=1}^5 \Vert f_l \Vert_{X_{s,2}^{\omega}}.
\end{align*}
\end{proof}
As a consequence, we obtain local Lipschitz continuity of the operator $\mc N$ from \eqref{N}.
\begin{lemma} \label{Lipschitz}
The nonlinearity $\mathcal{N}$ from \eqref{N}, initially defined on $C_c^\infty(\R^3)$,  extends to a continuous map $\mathcal{N}: X_{s,k}^{\omega} \rightarrow X_{s,k}^{\omega}$ satisfying
\begin{align} \label{Nonlinearity}
\Vert \mathcal{N}(f) - \mathcal{N}(g) \Vert_{X_{s,k}^{\omega}} \leq \gamma ( \Vert f \Vert_{X_{s,k}^{\omega}}, \Vert g \Vert_{X_{s,k}^{\omega}})(\Vert f \Vert_{X_{s,k}^{\omega}} + \Vert g \Vert_{X_{s,k}^{\omega}}) \Vert f-g \Vert_{X_{s,k}^{\omega}},
\end{align}
for all $f,g \in X_{s,k}^{\omega}$, where $\gamma: [0, \infty) \times [0, \infty) \rightarrow [0, \infty)$ is a continuous function.
\end{lemma}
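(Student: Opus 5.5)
The plan is to use the polynomial representation \eqref{Rewrite} of $\mathcal N$, valid since $p$ is odd. With it, Lipschitz continuity reduces to the algebra property \eqref{Multi1} and the weighted product estimate of Lemma \ref{WProductLemma}.

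\emph{Unweighted part.} For $f,g \in C^\infty_c(\R^3)$ I use the telescoping identity
\[
f^n - g^n = (f-g)\sum_{m=0}^{n-1} f^{m} g^{n-1-m}, \qquad n \geq 2 .
\]
Each unweighted term $\binom{p}{n}\phi^{p-n}(f^n-g^n)$ needs a bound for the powers $\phi^{p-n}$. Since $\phi(|\cdot|)$ itself decays only like $|y|^{-2/(p-1)}$ and need not lie in $X_{s,k}^\omega$, I do not put $\phi$ in the space. Instead I treat $\phi^{p-n}$ for $n<p$ as a bounded smooth radial multiplier with symbol-type bounds $|\partial^\alpha \phi^{m}(y)| \lesssim \langle y\rangle^{-2m/(p-1)-|\alpha|}$.

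For the $\dot H^{k,1}_\omega$ part this multiplier bound is straightforward via Leibniz. For the $\dot H^{s}$ part, multiplication by such a function is bounded on $\dot H^{s}\cap\dot H^{k}$. The cleanest route is to write $\phi^m = \phi(0)^m + (\phi^m-\phi(0)^m)$. The constant is harmless, while the remainder decays and lies in the space after checking that its derivatives satisfy $|\partial^\alpha(\cdot)|\lesssim\langle y\rangle^{-2m/(p-1)-|\alpha|}$, together with the $L^2$-integrability of the relevant derivatives, exactly as in Lemma \ref{VginSpace}. Because $\phi^m$ is radial, $\Omega_{ij}$ passes through it. Then \eqref{Multi1} gives
\[
\|\phi^{p-n}(f^n-g^n)\|_{X_{s,k}^\omega} \lesssim \|f-g\|_{X_{s,k}^\omega}\sum_{m=0}^{n-1}\|f\|^{m}\|g\|^{n-1-m},
\]
which has the required form because $n\ge2$ supplies at least one factor of $\|f\|+\|g\|$.

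\emph{Weighted part.} The terms $c|y|^2\phi^{2p-1-n}(f^n-g^n)$, $2\le n\le 2p-1$, are where the real obstacle lies. Lemma \ref{WProductLemma} only handles a product of exactly $2p-1$ factors from $X_{s,k}^\omega$, with the full weight $|y|^2$. The idea is to absorb the missing $2p-1-n$ factors into $\phi$'s decay: $|y|^2\phi^{2p-1-n}$ grows at most like $|y|^{2-2(2p-1-n)/(p-1)}$, which is weaker than $|y|^2$.

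I would first try to rerun the proof of Lemma \ref{WProductLemma} with some of the $f_l$ replaced by $\phi$ (or $\phi-\phi(0)$ plus a constant). The proof only uses three properties of each factor: the $L^\infty$ bounds, the weighted Strauss bound $\sup|y|^{2/(p-1)-\varepsilon}|f_l|$, and $L^2$/Hardy bounds of derivatives. $\phi$ satisfies the first two, even with $\varepsilon=0$. For derivatives, $|\partial^\alpha\phi|\lesssim\langle y\rangle^{-2/(p-1)-|\alpha|}$ gives better decay than the Strauss bound whenever a derivative hits $\phi$. So every case in that proof goes through with $\|\phi\|$-type constants in place of $\|f_l\|_{X_{s,k}^\omega}$. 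The only delicate placements are the ones where $f_1$ is measured in $L^2$ via Hardy, and there we keep a genuine $f$ or $g$ (or $f-g$) in that slot.

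Combining with telescoping, where the difference $f-g$ is always one of the factors, yields \eqref{Nonlinearity} with $\gamma$ a polynomial in $\|f\|,\|g\|$, hence continuous.

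\emph{Extension.} The bound holds on the dense set $C^\infty_c(\R^3)$, so $\mathcal N$ is uniformly Lipschitz on bounded sets and extends uniquely to a continuous map on $X_{s,k}^\omega$ satisfying \eqref{Nonlinearity}. The extension agrees with the pointwise formula \eqref{N}, by the $L^\infty$ embedding of Lemma \ref{LemmaEmbedding}. The main work is verifying that the $\phi$-substituted version of Lemma \ref{WProductLemma} holds, in particular in the $\dot H^{s}$ (Hardy) cases.
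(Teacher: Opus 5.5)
Your overall strategy (telescoping, then the algebra property and Lemma \ref{WProductLemma}) matches the paper's, but one step fails and it rests on a wrong premise. The failing step is the justification of the unweighted part. Writing $\phi^m=\phi(0)^m+(\phi^m-\phi(0)^m)$ does not give a decaying remainder. Since $\phi(r)\to0$ as $r\to\infty$, you get $\phi^m-\phi(0)^m\to-\phi(0)^m\neq0$. So your zeroth-order bound $\lesssim\langle y\rangle^{-2m/(p-1)}$ is false, and the remainder is not in $X_{s,k}^{\omega}$, since a nonzero limit at infinity is incompatible with $\dot H^s(\R^3)\hookrightarrow L^{6/(3-2s)}$. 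The claimed boundedness of multiplication by $\phi^{p-n}$ on $\dot H^s$ is therefore left unproved, and for non-integer $s\in(1,\frac32)$ that is not a one-liner.

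The wrong premise is that $\phi$ ``need not lie in'' the space. It does, and this is exactly why $s$ is taken strictly above $s_c=\frac32-\frac{2}{p-1}$. We have $\phi(y)\sim a^{1/(p-1)}|y|^{-2/(p-1)}$ with symbol-type derivative bounds, so $\hat\phi(\xi)\sim C|\xi|^{-3+2/(p-1)}$ as $\xi\to0$, while $\hat\phi$ decays rapidly at infinity. Hence $|\xi|^{s}\hat\phi\in L^2$ if and only if $s>s_c$. Equivalently, by scaling, $\|(1-\chi(\cdot/R))\phi\|_{\dot H^s}\lesssim R^{s_c-s}\to0$, so $\phi$ is a limit of $C_c^\infty$ functions. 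Moreover $\partial^\alpha\phi\in L^2$ for $|\alpha|=k$, and $\Omega_{ij}\phi=0$. Thus $\phi$ and all powers $\phi^m$ lie in $X_{s,k}^{\omega}$.

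Once you know this, the lemma is immediate, and this is the paper's two-line proof, which uses $\phi\in X_{s,k}^{\omega}$ implicitly. After telescoping, each unweighted term is a product of exactly $p$ factors from $\{\phi,f,g,f-g\}$. Each weighted term $|y|^2\phi^{2p-1-n}(f-g)f^{n-1-j}g^j$ is $|y|^2$ times exactly $(2p-1-n)+1+(n-1)=2p-1$ such factors. So \eqref{Multi1} and Lemma \ref{WProductLemma} apply verbatim, with the powers of $\|\phi\|_{X_{s,k}^{\omega}}$ absorbed into $\gamma$, and $n\geq2$ supplies the factor $\|f\|+\|g\|$.

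Your plan to rerun the proof of Lemma \ref{WProductLemma} with $\phi$ substituted is therefore unnecessary. It could be made to work, but two caveats apply. First, whenever Leibniz puts a $\phi$ factor into an $L^2$ or Hardy slot, e.g.\ $\||\cdot|^{-(s-1)}\partial_m\phi\|_{L^2}$, finiteness again uses precisely $s>s_c$. Second, the first term in \eqref{Subcases} is estimated in the paper via \eqref{Multi2} applied to the whole product. You would have to redo that by hand if $\phi$ were not in the space, and your list of ``three properties'' overlooks it. Your extension-by-density argument is fine.
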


\begin{proof}
For better readability we skip the variable $y$ in the following. By \eqref{Rewrite} we find
\begin{align*}
&\mathcal{N}(f)-\mathcal{N}(g) = \sum_{n=2}^p \binom{p}{n} \phi^{p-n} (f^n -g^n) -c |\cdot|^2 \sum_{n=2}^{2p-1} \binom{2p-1}{n} \phi^{2p-1-n}(f^n-g^n)\\
&=(f-g) \left( \sum_{n=2}^p \binom{p}{n} \phi^{p-n} \sum_{j=0}^{n-1} f^{n-1-j}g^j - c |\cdot|^2 \sum_{n=2}^{2p-1} \binom{2p-1}{n} \phi^{2p-1-n} \sum_{j=0}^{n-1} f^{n-1-j}g^j \right).
\end{align*}
Applying Lemma \ref{WProductLemma} and the Banach algebra property to every single term appearing in these two sums \eqref{Nonlinearity} follows.
\end{proof}

\subsection{Construction of strong solutions} We consider Duhamel's formula 
\begin{align} \label{Duhamelsformula}
\varphi(\tau) = S^X(\tau) v_0 + \int_0^{\tau} S^X(\tau-\tau^{\prime}) \mathcal{N}(\varphi(\tau^{\prime}))d \tau^{\prime}, \quad \tau \geq 0,
\end{align}
in order to construct a strong solution to the initial value problem \eqref{CE} with $T=1$.
We define the Banach space
\begin{align*}
\mathcal{X}_{s,k}^{\omega} := \{ \varphi \in C([0,\infty),X_{s,k}^{\omega}) : \Vert \varphi \Vert_{\mathcal{X}_{s,k}^{\omega}} := \sup_{\tau \geq 0} e^{\nu \tau} \Vert \varphi(\tau) \Vert_{X_{s,k}^{\omega}} < \infty \},
\end{align*}
with $\nu >0$ being the constant from \eqref{Semigroupdecay}. Furthermore, we set \[\mathcal{X}_{s,k}^{\omega}(\delta) := \{ \varphi \in \mathcal{X}_{s,k}^{\omega} : \Vert \varphi \Vert_{\mathcal{X}_{s,k}^{\omega}} \leq \delta \}\]
for $\delta >0$. To run a fixed point argument, we use a Lyapunov-Perron type argument and define for each unstable eigenvalue $\lambda_j$, $j \in \{1,...,\tilde{N} \}$ a correction term
\begin{align}\label{Def:Corr1}
C_j(\varphi,u) := P_j u + P_j \int_0^{\infty} e^{- \lambda_j \tau^{\prime}} \mathcal{N}(\varphi(\tau^{\prime})) d \tau^{\prime},
\end{align}
and set
\begin{align}\label{Def:Corr2}
C(\varphi,u) := \sum_{j=1}^{\tilde{N}} C_j(\varphi,u),
\end{align}
for $\varphi \in \mathcal{X}_{s,k}^{\omega}$ and $u \in X_{s,k}^{\omega}$.  With this at hand we are able to define the operator 
\begin{align*}
[K(\varphi,u)](\tau) := S^X(\tau)(u- C(\varphi,u)) + \int_0^{\tau} S^X(\tau-\tau^{\prime}) \mathcal{N}(\varphi(\tau^{\prime}))d \tau^{\prime}, \quad \tau \geq 0.
\end{align*}
As a first step to find a strong solution to the problem \eqref{CE}, we run a fixed point argument for the operator $K(\cdot,u)$ for suitably small $u \in X_{s,k}^{\omega}$. Then, we determine a co-dimension $N$ Lipschitz manifold $\mathcal{M}$ of functions $u \in X_{s,k}^{\omega}$ for which the correction term $C(\varphi,u)$ vanishes. In summary, we find for every $u \in \mathcal{M}$ a unique fixed point $\varphi = \varphi(u)$ that solves the fixed point equation $K(\varphi,u) = \varphi$ with $C(\varphi,u) =0$, i.e. a solution to Duhamel's formula \eqref{Duhamelsformula}. The precise statements are given in the following.

\begin{lemma} \label{OpK}
For all sufficiently small $\delta >0$, all sufficiently large $c>0$ and $u \in X_{s,k}^{\omega}$ satisfying $\Vert u \Vert_{X_{s,k}^{\omega}} \leq \frac{\delta}{c}$, the operator $K(\cdot, u)$ maps the ball $\mathcal{X}_{s,k}^{\omega}(\delta)$ into itself and satisfies
\begin{align*}
\Vert K(\varphi_1,u)-K(\varphi_2,u) \Vert_{\mathcal{X}_{s,k}^{\omega}} \leq \frac{1}{2} \Vert \varphi_1 - \varphi_2 \Vert_{\mathcal{X}_{s,k}^{\omega}},
\end{align*}
for all $\varphi_1, \varphi_2 \in \mathcal{X}_{s,k}^{\omega}(\delta)$.
\end{lemma}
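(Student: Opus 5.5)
The plan is the standard Lyapunov--Perron argument: split $K(\varphi,u)$ into its stable part $(1-P)K$ and its unstable parts $P_jK$, and estimate each separately. The basic tool is Lemma \ref{Lipschitz}. With $g=0$ (and $\mathcal N(0)=0$) it gives
\[
\Vert \mathcal N(\varphi(\tau))\Vert_{X_{s,k}^{\omega}} \lesssim \Vert\varphi(\tau)\Vert^2_{X_{s,k}^{\omega}} \le \delta^2 e^{-2\nu\tau}
\quad\text{for } \varphi\in\mathcal X_{s,k}^{\omega}(\delta),\ \delta\le 1,
\]
and for the difference,
\[
\Vert \mathcal N(\varphi_1(\tau))-\mathcal N(\varphi_2(\tau))\Vert_{X_{s,k}^{\omega}} \lesssim \delta e^{-2\nu\tau}\Vert\varphi_1-\varphi_2\Vert_{\mathcal X_{s,k}^{\omega}}.
\]
The integral $\int_0^\infty e^{-\lambda_j\tau'}\mathcal N(\varphi(\tau'))\,d\tau'$ converges absolutely in $X_{s,k}^{\omega}$ because $\lambda_j\ge 0$ is real. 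Hence $C(\varphi,u)$ is well defined, and so is $K(\varphi,u)\in C([0,\infty),X_{s,k}^{\omega})$; continuity in $\tau$ follows from strong continuity of $S^X$.

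For the unstable part I use that $\operatorname{rg}P_j$ is spanned by eigenfunctions $f_j^i$. So $\mathcal L^X$ acts as $\lambda_j$ on $\operatorname{rg}P_j$, and therefore $S^X(\tau)P_j = e^{\lambda_j\tau}P_j$; both facts follow from \eqref{ProjectionPj}. Since $P_j$ commutes with $S^X$, and $P_jP_l=\delta_{jl}P_j$, we get $P_j(u-C(\varphi,u)) = -P_j\int_0^\infty e^{-\lambda_j\tau'}\mathcal N\,d\tau'$. The $P_j u$ term cancels, and
\[
P_j[K(\varphi,u)](\tau) = -\int_\tau^\infty e^{\lambda_j(\tau-\tau')}P_j\mathcal N(\varphi(\tau'))\,d\tau'.
\]
Because $\lambda_j\ge 0$, this is bounded by $\int_\tau^\infty \delta^2 e^{-2\nu\tau'}\,d\tau' \lesssim \delta^2 e^{-2\nu\tau}$ whenever $\lambda_j<2\nu$. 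For $\lambda_j\ge 2\nu$ one instead estimates $e^{\lambda_j(\tau-\tau')}\le 1$ directly. In either case the bound is $\lesssim \delta^2 e^{-\nu\tau}$.

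For the stable part, $(1-P)C(\varphi,u)=0$, since the projections are mutually transversal and $P=\sum_j P_j$. Then Proposition \ref{Semigroupdec} gives
\[
\Vert(1-P)K(\varphi,u)(\tau)\Vert \le Ce^{-\nu\tau}\Vert(1-P)u\Vert + \int_0^\tau Ce^{-\nu(\tau-\tau')}\delta^2e^{-2\nu\tau'}\,d\tau' \lesssim e^{-\nu\tau}\Bigl(\tfrac{\delta}{c}+\delta^2\Bigr).
\]
Here I use that $1-P$ is bounded. Summing the two parts yields $\Vert K(\varphi,u)\Vert_{\mathcal X_{s,k}^{\omega}} \le M(\delta/c+\delta^2)$ for a constant $M$ independent of $\delta$ and $c$. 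Choosing $c\ge 2M$ and $\delta\le 1/(2M)$ gives the self-map property.

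The contraction estimate is identical. The $u$-terms cancel in $K(\varphi_1,u)-K(\varphi_2,u)$, and the same two computations with the difference estimate yield $\lesssim \delta\Vert\varphi_1-\varphi_2\Vert_{\mathcal X_{s,k}^{\omega}}$. This is at most $\tfrac12\Vert\varphi_1-\varphi_2\Vert_{\mathcal X_{s,k}^{\omega}}$ once $\delta$ is small.

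The main point needing care is the identity $S^X(\tau)P_j=e^{\lambda_j\tau}P_j$, that is, the absence of Jordan blocks on $\operatorname{rg}P_j$. I would justify it from the explicit form of $P_j$ in terms of $L^2_\sigma$-orthogonal projections onto eigenfunctions of the self-adjoint $\mathcal L$. That form makes $\operatorname{rg}P_j$ consist of eigenvectors, and semigroup theory then gives the exponential action.
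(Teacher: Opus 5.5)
Your proposal is correct and follows the paper's proof essentially verbatim. Both rewrite $K(\varphi,u)(\tau)$ as $(1-P)S^X(\tau)u+\int_0^\tau(1-P)S^X(\tau-\tau')\mathcal N(\varphi(\tau'))\,d\tau'-\sum_j\int_\tau^\infty e^{\lambda_j(\tau-\tau')}P_j\mathcal N(\varphi(\tau'))\,d\tau'$ and then estimate via Proposition \ref{Semigroupdec} and Lemma \ref{Lipschitz}. Your version additionally spells out the cancellation of $P_ju$ and the identity $S^X(\tau)P_j=e^{\lambda_j\tau}P_j$, both of which the paper uses implicitly.

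One simplification: your case split $\lambda_j<2\nu$ versus $\lambda_j\ge 2\nu$ is superfluous. Since $\lambda_j\ge 0$, one has $e^{\lambda_j(\tau-\tau')}\le 1$ for all $\tau'\ge\tau$, so a single estimate covers every eigenvalue.
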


\begin{proof}
Let $u \in X_{s,k}^{\omega}$ with $\Vert u \Vert_{X_{s,k}^{\omega}} \leq \frac{\delta}{c}$ and $\varphi \in \mathcal{X}_{s,k}^{\omega}(\delta)$, then we have
\begin{align*}
[K(\varphi, u)](\tau) &= (1-P)S^X(\tau)u + \int_0^{\tau} (1-P)S^X(\tau-\tau^{\prime}) \mathcal{N}(\varphi(\tau^{\prime}))d \tau^{\prime}\\ 
&- \sum_{j=1}^{\tilde{N}} \int_{\tau}^{\infty} e^{\lambda_j (\tau- \tau^{\prime})} P_j \mathcal{N}(\varphi(\tau^{\prime}))d \tau^{\prime}.
\end{align*}
The exponential decay of the semigroup on the stable subspace by Proposition \ref{Semigroupdec} together with estimate \eqref{Nonlinearity} for the nonlinearity implies
\begin{align*}
\Vert [K(\varphi,u)](\tau)\Vert_{X_{s,k}^{\omega}} &\lesssim e^{-\nu \tau} \frac{\delta}{c} +\delta^2 e^{-\nu \tau} \int_0^{\tau} e^{-\nu \tau^{\prime}} d \tau^{\prime} + \sum_{j=1}^{\tilde{N}} \delta^2 e^{\lambda_j \tau} \int_{\tau}^{\infty} e^{-(\lambda_j + 2 \nu)\tau^{\prime}} d \tau^{\prime}\\
&\lesssim e^{-\nu \tau} \left( \frac{\delta}{c} +\frac{\delta^2}{\nu} \right).
\end{align*}

For sufficiently small $\delta >0$ and sufficiently large $c >0$ we infer $\Vert K(\varphi,u) \Vert_{\mathcal{X}_{s,k}^{\omega}} \leq \delta$. Next, let $u \in X_{s,k}^{\omega}$ with $\Vert u \Vert_{X_{s,k}^{\omega}} \leq \frac{\delta}{c}$ and $\varphi_1, \varphi_2 \in \mathcal{X}_{s,k}^{\omega}(\delta)$, then we find
\begin{align*}
\Vert [K(\varphi_1,u)](\tau) - [K(\varphi_2,u)](\tau) \Vert_{X_{s,k}^{\omega}} &\lesssim \int_0^{\infty} e^{-\nu(\tau-\tau^{\prime})} \Vert \mathcal{N}(\varphi_1(\tau^{\prime}))- \mathcal{N}(\varphi_2(\tau^{\prime})) \Vert_{X_{s,k}^{\omega}} d \tau^{\prime}\\ 
&+ \sum_{j=1}^{\tilde{N}} \int_{\tau}^{\infty} e^{\lambda_j (\tau-\tau^{\prime})} \Vert \mathcal{N}(\varphi_1(\tau^{\prime}))- \mathcal{N}(\varphi_2(\tau^{\prime})) \Vert_{X_{s,k}^{\omega}} d \tau^{\prime}\\
&\lesssim \delta e^{-\nu \tau} \left( \frac{1}{\nu} + \sum_{j=1}^{\tilde{N}} \frac{e^{-\nu \tau}}{\lambda_j+2 \nu} \right) \Vert \varphi_1 - \varphi_2 \Vert_{\mathcal{X}_{s,k}^{\omega}}.
\end{align*}
Again, for sufficiently small $\delta >0$ we find $\Vert K(\varphi_1,u)-K(\varphi_2,u) \Vert_{\mathcal{X}_{s,k}^{\omega}} \leq \frac{1}{2} \Vert \varphi_1 - \varphi_2 \Vert_{\mathcal{X}_{s,k}^{\omega}}$.
\end{proof}

With this result at hand the contraction mapping principle yields for every $u \in X_{s,k}^{\omega}$ with $\Vert u \Vert_{X_{s,k}^{\omega}} \leq \frac{\delta}{c}$ a unique fixed point $\varphi_u \in \mathcal{X}_{s,k}^{\omega}(\delta)$ of the equation $K(\varphi,u) = \varphi$. In order to obtain a solution to the original problem, we construct a Lipschitz manifold determining the vanishing of the correction term. In the following let
\begin{align*}
B_{\frac{\delta}{c}} := \{ u \in X_{s,k}^{\omega} : \Vert u \Vert_{X_{s,k}^{\omega}} \leq \frac{\delta}{c} \},
\end{align*}
and 
\begin{align*}
C(u):=C(\varphi_u,u),
\end{align*}
with $\varphi_u \in \mathcal{X}_{s,k}^{\omega}(\delta)$ being the unique fixed point of the operator $K(\cdot,u)$ associated to $u \in B_{\frac{\delta}{c}}$. Before  constructing the Lipschitz manifold, we need the following Lemma concerning Lipschitz regularity of the map $u \mapsto \varphi_u$.

\begin{lemma} \label{Lipschitzcont}
Let $u \in B_{\frac{\delta}{c}}$, then for sufficiently small $\delta >0$ the map $u \mapsto \varphi_u = K(\varphi,u)$ is Lipschitz continuous from $X_{s,k}^{\omega}$ to $\mathcal{X}_{s,k}^{\omega}$.
\end{lemma}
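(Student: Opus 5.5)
The plan is the standard fixed-point argument. Fix $u, v \in B_{\frac{\delta}{c}}$ and let $\varphi_u, \varphi_v \in \mathcal{X}_{s,k}^{\omega}(\delta)$ be the fixed points from Lemma \ref{OpK}. Write
\begin{align*}
\varphi_u - \varphi_v = \big(K(\varphi_u,u) - K(\varphi_v,u)\big) + \big(K(\varphi_v,u) - K(\varphi_v,v)\big).
\end{align*}
Lemma \ref{OpK} bounds the first difference by $\frac{1}{2}\Vert \varphi_u - \varphi_v \Vert_{\mathcal{X}_{s,k}^{\omega}}$. If we can show
\begin{align*}
\Vert K(\varphi,u)-K(\varphi,v) \Vert_{\mathcal{X}_{s,k}^{\omega}} \lesssim \Vert u-v \Vert_{X_{s,k}^{\omega}}
\end{align*}
uniformly in $\varphi \in \mathcal{X}_{s,k}^{\omega}(\delta)$, then absorbing the first term gives $\Vert \varphi_u - \varphi_v\Vert_{\mathcal{X}_{s,k}^{\omega}} \leq 2 \Vert K(\varphi_v,u)-K(\varphi_v,v)\Vert_{\mathcal{X}_{s,k}^{\omega}} \lesssim \Vert u - v\Vert_{X_{s,k}^{\omega}}$, which is the claim.

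For the second difference, the Duhamel integral term does not depend on $u$ and cancels. The nonlinear part of the correction $C(\varphi,u)$ also does not depend on $u$, and the remaining part is linear: $C(\varphi,u)-C(\varphi,v) = \sum_j P_j(u-v) = P(u-v)$. Hence
\begin{align*}
[K(\varphi,u)-K(\varphi,v)](\tau) = S^X(\tau)(1-P)(u-v).
\end{align*}
Proposition \ref{Semigroupdec} then gives $\Vert S^X(\tau)(1-P)(u-v)\Vert_{X_{s,k}^{\omega}} \leq C e^{-\nu\tau}\Vert(1-P)(u-v)\Vert_{X_{s,k}^{\omega}}$. The projection $P$ is bounded on $X_{s,k}^{\omega}$, since it is a finite sum of Riesz projections. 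Multiplying by $e^{\nu\tau}$ and taking the supremum over $\tau \ge 0$ yields the desired estimate.

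The only point needing care is the algebraic identity $S^X(\tau)(u - C(\varphi,u)) - S^X(\tau)(v - C(\varphi,v)) = S^X(\tau)(1-P)(u-v)$. This uses linearity of each $P_j$ and the fact that the integral $\int_0^\infty e^{-\lambda_j\tau'}P_j\mathcal N(\varphi(\tau'))\,d\tau'$ is the same for $u$ and $v$. That integral converges because $\Re\lambda_j \ge 0$ and $\Vert \mathcal{N}(\varphi(\tau'))\Vert_{X_{s,k}^{\omega}} \lesssim \delta^2 e^{-2\nu\tau'}$ by Lemma \ref{Lipschitz}. The smallness of $\delta$ enters only through Lemma \ref{OpK}, which supplies the contraction constant $\frac12$. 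So I expect no real obstacle: the main work is checking that the $u$-dependence of $K$ is entirely through the linear, decaying stable part.
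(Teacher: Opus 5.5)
Your proof is correct. It rests on the same basic idea as the paper: subtract the two fixed-point equations and absorb the $\varphi$-difference using smallness of $\delta$. You organize it differently, though, and the difference is worth noting.

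The paper estimates $\varphi_u-\varphi_{\tilde u}$ directly. It bounds $\Vert C(u)-C(\tilde u)\Vert_{X_{s,k}^{\omega}}\lesssim \Vert u-\tilde u\Vert_{X_{s,k}^{\omega}}+\delta\Vert\varphi_u-\varphi_{\tilde u}\Vert_{\mathcal{X}_{s,k}^{\omega}}$ via Lemma \ref{Lipschitz}, then absorbs the $\delta$-term for small $\delta$. Along the way it applies the decay rate $e^{-\nu\tau}$ to $S^X(\tau)$ acting on the full difference $(u-\tilde u)-(C(u)-C(\tilde u))$, and $e^{-\nu(\tau-\tau')}$ to the Duhamel integrand. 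Strictly speaking, that is only legitimate after splitting off the $\rg(P)$-components and recombining them into $-\sum_j\int_\tau^\infty e^{\lambda_j(\tau-\tau')}P_j(\cdots)\,d\tau'$, as is done in the proof of Lemma \ref{OpK}.

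You instead add and subtract $K(\varphi_v,u)$. The $\varphi$-dependence is then handled entirely by the contraction bound already established in Lemma \ref{OpK}, where that splitting was carried out. The $u$-dependence is isolated by the exact identity $K(\varphi,u)-K(\varphi,v)=S^X(\tau)(1-P)(u-v)$, to which Proposition \ref{Semigroupdec} applies verbatim. This is the standard uniform-contraction argument for Lipschitz dependence of a fixed point on a parameter. It needs no new nonlinear estimates, avoids the projection bookkeeping, and gives the explicit Lipschitz constant $2C\Vert 1-P\Vert$, independent of $\delta$.
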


\begin{proof}
Let $u, \tilde{u} \in B_{\frac{\delta}{c}}$ and $\varphi_u, \varphi_{\tilde{u}} \in  \mathcal{X}_{s,k}^{\omega}(\delta)$ be the unique fixed points of the operator $K$. Let $\tau > 0$, then we have
\begin{align*}
\Vert \varphi_u(\tau) - \varphi_{\tilde{u}}(\tau) \Vert_{X_{s,k}^{\omega}} &\leq e^{-\nu \tau} \left( \Vert u-\tilde{u} \Vert_{X_{s,k}^{\omega}} + \Vert C(\tilde{u})-C(u) \Vert_{X_{s,k}^{\omega}} \right)\\
&+ \int_0^{\tau} e^{-\nu (\tau - \tau^{\prime})} \delta e^{-2 \nu \tau^{\prime}} d \tau^{\prime} \Vert \varphi_u - \varphi_{\tilde{u}} \Vert_{\mathcal{X}_{s,k}^{\omega}} \\
&\lesssim e^{-\nu \tau} \left( \Vert u - \tilde{u} \Vert_{X_{s,k}^{\omega}} + \delta \Vert \varphi_u - \varphi_{\tilde{u}} \Vert_{\mathcal{X}_{s,k}^{\omega}} \right),
\end{align*}
where we used that
\begin{align*}
\Vert C(\tilde{u})-C(u) \Vert_{X_{s,k}^{\omega}} \lesssim \Vert u - \tilde{u} \Vert_{X_{s,k}^{\omega}} + \delta \Vert \varphi_u - \varphi_{\tilde{u}} \Vert_{\mathcal{X}_{s,k}^{\omega}}.
\end{align*}
This implies $\Vert \varphi_u - \varphi_{\tilde{u}} \Vert_{\mathcal{X}_{s,k}^{\omega}} \lesssim \Vert u - \tilde{u} \Vert_{X_{s,k}^{\omega}}$ for sufficiently small $\delta > 0$.
\end{proof}

\begin{proposition}\label{manifoldM}
	For all sufficiently small $\delta > 0$ and all sufficiently large $c>0$, 
	there exists a co-dimension $N$ Lipschitz manifold 
	$
	\mathcal{M} = \mathcal{M}_{\delta,c} \subset X_{s,k}^{\omega}
	$
	with the following properties:
	
	\begin{enumerate}[i)]
		\item $0 \in \mathcal{M}$.
		
		\item There exists a Lipschitz map 
		$
		W : \ker(P) \cap B_{\frac{\delta}{2c}} \to \rg(P)
		$
		such that
		\[
		\mathcal{M} = 
		\left\{ v + W(v) : v \in \ker(P),\,
		\|v\|_{X_{s,k}^{\omega}} \le \tfrac{\delta}{2c} \right\} \subset
		\left\{ u \in B_{\frac{\delta}{c}} : C(u)=0 \right\}.
		\]
		
		\item For every $u \in \mathcal{M}$ there exists a unique 
		$\varphi_u \in \mathcal{X}_{s,k}^{\omega}(\delta)$ satisfying
		\begin{align}\label{classicalsolutioneq}
			\varphi(\tau) 
			= S^X(\tau)u 
			+ \int_0^{\tau} S^X(\tau-\tau') 
			\mathcal{N}(\varphi(\tau'))\, d\tau',
			\quad \tau \ge 0 .
		\end{align}
		
		\item There exists a constant $\tilde{c} > 2c$ such that if
		$
		u \in B_{\frac{\delta}{\tilde{c}}}
	$ and $C(u)=0$,
		then $u \in \mathcal{M}$.
	\end{enumerate}
\end{proposition}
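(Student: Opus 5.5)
The plan is to construct $W$ by solving $C(v+w)=0$ for $w\in \rg(P)$ via a fixed point argument in the finite-dimensional space $\rg(P)$, which has dimension $N=\sum_j N_j$. First observe the structure of $C$. Since $P_j P_l=0$ for $j\neq l$ and $P_j$ projects onto $\rg(P_j)$, we have for $u=v+w$ with $v\in\ker(P)$, $w\in\rg(P)$ that $Pu=w$. Hence
\[
C(u)=w+\sum_{j=1}^{\tilde N} P_j\int_0^\infty e^{-\lambda_j\tau'}\mathcal N(\varphi_u(\tau'))\,d\tau' =: w + F(u).
\]
So $C(v+w)=0$ is equivalent to the fixed point equation $w=-F(v+w)=:\Phi_v(w)$ on $\rg(P)$.

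The key estimates come from Lemma \ref{Lipschitz} and Lemma \ref{Lipschitzcont}. For $u\in B_{\delta/c}$ we have $\|\mathcal N(\varphi_u(\tau'))\|\lesssim \delta^2 e^{-2\nu\tau'}$, and since $\lambda_j\geq 0$ the integral converges, giving $\|F(u)\|_{X^\omega_{s,k}}\lesssim \delta^2$. For the Lipschitz bound we use
\[
\|\mathcal N(\varphi_u(\tau'))-\mathcal N(\varphi_{\tilde u}(\tau'))\|\lesssim \delta e^{-2\nu\tau'}\|\varphi_u-\varphi_{\tilde u}\|_{\mathcal X^\omega_{s,k}},
\]
and Lemma \ref{Lipschitzcont} then yields $\|F(u)-F(\tilde u)\|\lesssim\delta\|u-\tilde u\|$.

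Now fix $v\in\ker(P)$ with $\|v\|\le\delta/(2c)$ and consider $\Phi_v$ on $\{w\in\rg(P):\|w\|\le \delta/(2c)\}$. Then $v+w\in B_{\delta/c}$ and $\|\Phi_v(w)\|\le C_0\delta^2\le \delta/(2c)$, provided $\delta\le 1/(2cC_0)$, i.e. $\delta$ is small relative to $c$. The contraction constant is $C_0\delta<1/2$. Banach's fixed point theorem gives a unique $W(v)$, and the same contraction bound shows
\[
\|W(v)-W(\tilde v)\|\le C_0\delta(\|v-\tilde v\|+\|W(v)-W(\tilde v)\|),
\]
so $W$ is Lipschitz. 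We then define $\mathcal M$ as the graph of $W$. This is a Lipschitz graph over a ball in the closed complement $\ker(P)$ of the $N$-dimensional $\rg(P)$, hence a co-dimension $N$ Lipschitz manifold, and property ii) holds by construction. For i), note $\varphi_0=0$ is the fixed point for $u=0$ (as $\mathcal N(0)=0$), so $F(0)=0$; thus $W(0)=0$ by uniqueness and $0\in\mathcal M$. For iii), on $\mathcal M$ we have $C(u)=0$, so the fixed point $\varphi_u$ of $K(\cdot,u)$ from Lemma \ref{OpK} satisfies exactly \eqref{classicalsolutioneq}, with uniqueness inherited from the contraction.

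For iv), take $u\in B_{\delta/\tilde c}$ with $C(u)=0$ and write $u=v+w$, $v=(1-P)u$, $w=Pu$. Boundedness of $P$ gives $\|v\|,\|w\|\le \|1-P\|\,\delta/\tilde c$, which is at most $\delta/(2c)$ once $\tilde c\ge 2c\max(\|P\|,\|1-P\|)$. Then $w$ is a fixed point of $\Phi_v$ in the ball, so uniqueness forces $w=W(v)$ and $u\in\mathcal M$.

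The main obstacle is bookkeeping the compatibility of the smallness parameters: one needs $\delta\ll 1/c$ so that $\Phi_v$ maps the $\delta/(2c)$-ball into itself, while Lemma \ref{OpK} requires $c$ large and $\delta$ small. I would choose $c$ first, then $\delta$, and check that the constant in $\|F(u)\|\lesssim\delta^2$ is independent of $c$. It is, since it only uses $\|\varphi_u\|\le\delta$.
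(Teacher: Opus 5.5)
Your proposal is correct and is essentially the paper's argument. Both use a Banach fixed point for $w\mapsto -\sum_j P_j\int_0^\infty e^{-\lambda_j\tau'}\mathcal N(\varphi_{v+w}(\tau'))\,d\tau'$ on a ball in the finite-dimensional space $\rg(P)$, obtain the Lipschitz bound for $W$ from Lemma \ref{Lipschitzcont}, and prove iv) by uniqueness of that fixed point. The only difference is that you work on $\{\|w\|\le\delta/(2c)\}$ instead of the paper's $\tilde B_{\delta/c}(v)=\{w\in\rg(P):\|v+w\|\le\delta/c\}$, which is why you need the extra condition $\tilde c\ge 2c\|P\|$ in iv); with the paper's ball, $Pu\in\tilde B_{\delta/c}((1-P)u)$ is automatic.

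For the uniqueness in iii), the step hidden behind ``inherited from the contraction'' is the following. Any $\tilde\varphi\in\mathcal X^\omega_{s,k}(\delta)$ solving \eqref{classicalsolutioneq} has $C(\tilde\varphi,u)=0$: apply $P_j$, use $S^X(\tau)P_j=e^{\lambda_j\tau}P_j$, multiply by $e^{-\lambda_j\tau}$ and let $\tau\to\infty$. Hence $\tilde\varphi$ is a fixed point of $K(\cdot,u)$ and therefore equals $\varphi_u$.
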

\noindent 

\begin{proof}
We write $X_{s,k}^{\omega} = \ker(P) \oplus \rg(P)$, such that each $u \in X_{s,k}^{\omega}$ splits into $u=v+w$ with $v \in \ker(P)$ and $w \in \rg(P)$. Let $v \in \ker(P) \cap B_{\frac{\delta}{2c}}$ and $\tilde{B}_{\frac{\delta}{c}}(v) := \{ w \in \rg(P) : \Vert v +w \Vert_{X_{s,k}^{\omega}} \leq \frac{\delta}{c} \}$. To each $v \in \ker(P) \cap B_{\frac{\delta}{2c}}$ we define a map
\begin{align*}
C_v : \tilde{B}_{\frac{\delta}{c}}(v) \rightarrow \rg(P), \quad w \mapsto C_v(w) := C(v+w).
\end{align*}
We show that $C_v$ is invertible at $0$ for sufficiently small $v$. Note that 
\begin{align*}
C(v+w) = 0 \quad \Leftrightarrow \quad w + \sum_{j=1}^{\tilde{N}} P_j \int_0^{\infty} e^{- \lambda_j \tau^{\prime}} \mathcal{N}(\varphi_{v+w}(\tau^{\prime})) d \tau^{\prime} = 0.
\end{align*}
To run a fixed point argument, we define $\Omega_v(w) := -\sum_{j=1}^{\tilde{N}} P_j \int_0^{\infty} e^{- \lambda_j \tau^{\prime}} \mathcal{N}(\varphi_{v+w}(\tau^{\prime})) d \tau^{\prime}$ and show that for $v \in \ker(P)$ with $\Vert v \Vert_{X_{s,k}^{\omega}} \leq \frac{\delta}{2c}$ the map $\Omega_v$ maps $\tilde{B}_{\frac{\delta}{c}}(v)$ into itself and is a contraction. Let $v \in \ker(P)$ with $\Vert v \Vert_{X_{s,k}^{\omega}} \leq \frac{\delta}{2c}$ and $w \in \tilde{B}_{\frac{\delta}{c}}(v)$, then
\begin{align*}
\Vert v + \Omega_v(w) \Vert_{X_{s,k}^{\omega}} &\lesssim \frac{\delta}{2c} + \sum_{j=1}^{\tilde{N}} \int_0^{\infty} \delta^2 e^{-(\lambda_j + 2 \nu) \tau^{\prime}} d \tau^{\prime} \lesssim \frac{\delta}{2c} + \delta^2,
\end{align*}
implying $\Omega_v(w) \in \tilde{B}_{\frac{\delta}{c}}(v)$ for sufficiently small $\delta >0$. Next, let $w, \tilde{w} \in \tilde{B}_{\frac{\delta}{c}}(v)$, then we have
\begin{align*}
\Vert \Omega_v(w) - \Omega_v(\tilde{w}) \Vert_{X_{s,k}^{\omega}} &\lesssim \sum_{j=1}^{\tilde{N}} \int_0^{\infty} \delta e^{- (\lambda_j + 2 \nu) \tau^{\prime}} d \tau^{\prime} \Vert \varphi_{v+w} - \varphi_{v+\tilde{w}} \Vert_{\mathcal{X}_{s,k}^{\omega}}\\
&\lesssim \delta \Vert w - \tilde{w} \Vert_{X_{s,k}^{\omega}},
\end{align*}
by the Lipschitz continuity of the map $u \mapsto \varphi_u$ from Lemma \ref{Lipschitzcont}. Hence, for sufficiently small $\delta >0$ the map $\Omega_v$ is a contraction on $\tilde{B}_{\frac{\delta}{c}}(v)$. Banach's fixed point theorem yields for every $v \in \ker(P)$ with $\Vert v \Vert_{X_{s,k}^{\omega}} \leq \frac{\delta}{2c}$ a unique $w \in \tilde{B}_{\frac{\delta}{c}}(v)$ satisfying $C_v(w) = C(v+w)=0$. This defines a map 
\begin{align*}
W : \ker(P) \cap B_{\frac{\delta}{2c}} \rightarrow \rg(P), \quad v \mapsto W(v) := C_v^{-1}(0).
\end{align*}
To show Lipschitz continuity of the map $W$, let $v, \tilde{v} \in \ker(P) \cap B_{\frac{\delta}{2c}}$ and $w \in \tilde{B}_{\frac{\delta}{c}}(v)$, $\tilde{w} \in \tilde{B}_{\frac{\delta}{c}}(\tilde{v})$ be the unique fixed points of $\Omega_v$ and $\Omega_{\tilde{v}}$, respectively. By the Lipschitz continuity of the nonlinearity together with the Lipschitz continuity of the map $u \mapsto \varphi_u$ we find
\begin{align*}
\Vert W(v) - W(\tilde{v}) \Vert_{X_{s,k}^{\omega}} &= \Vert w - \tilde{w} \Vert_{X_{s,k}^{\omega}}\\ 
&\lesssim \sum_{j=1}^{\tilde{N}} \int_0^{\infty} e^{- \lambda_j \tau^{\prime}} \Vert \mathcal{N}(\varphi_{v+w}(\tau^{\prime}))- \mathcal{N}(\varphi_{\tilde{v}+\tilde{w}} (\tau^{\prime})) \Vert_{X_{s,k}^{\omega}} d \tau^{\prime}\\
&\lesssim \delta \Vert \varphi_{v+w} - \varphi_{\tilde{v}+\tilde{w}} \Vert_{\mathcal{X}_{s,k}^{\omega}}\\
&\lesssim \delta \left( \Vert v-\tilde{v} \Vert_{X_{s,k}^{\omega}} + \Vert w - \tilde{w} \Vert_{X_{s,k}^{\omega}} \right),
\end{align*}
which implies $\Vert W(v) - W(\tilde{v}) \Vert_{X_{s,k}^{\omega}} \lesssim \Vert v-\tilde{v} \Vert_{X_{s,k}^{\omega}}$ for sufficiently small chosen $\delta >0$.\\
\\
Obviously, $0 \in \ker(P) \cap B_{\frac{\delta}{2c}}$ and since $w = 0 \in \rg(P)$ satisfies $\Omega_0(0)=0$, we infer $0 \in \mathcal{M}$.\\
Now, let $u \in X_{s,k}^{\omega}$ satisfy $C(u)=0$ and let $v_u := (1-P)u \in \ker(P)$, then $\Vert v_u \Vert_{X_{s,k}^{\omega}} \leq \frac{\delta}{2c}$ if $\Vert u \Vert_{X_{s,k}^{\omega}} \leq \frac{\delta}{\tilde{c}}$ for $\tilde{c} >0$ sufficiently large. Furthermore, $W(v_u) = Pu$ by uniqueness, which shows $u \in \mathcal{M}$.
\end{proof}

With this result at hand we are able to construct classical solutions to the Cauchy problem \eqref{CE} with $T=1$ and initial data belonging to the manifold $\mathcal{M}$.

\begin{proposition} \label{SolutionC1}
There exists $\nu > 0$ such that for any $\varphi_0 \in \mathcal{M} \cap \mathcal{D}(\mathcal{L}^X)$ the initial value problem 
\begin{align*}
\begin{cases} \partial_{\tau} \varphi = L \varphi (\tau, \cdot) + \mathcal{N}(\varphi(\tau, \cdot)), \quad \tau >0, \\ \varphi(0, \cdot) = \varphi_0, \end{cases}
\end{align*}
has a unique solution $\varphi \in C([0, \infty), \mathcal{D}(\mathcal{L}^X)) \cap C^1([0, \infty), X_{s,k}^{\omega})$ satisfying $\Vert \varphi(\tau) \Vert \lesssim e^{-\nu \tau}$ for $\tau \geq 0$.
\end{proposition}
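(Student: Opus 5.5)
The plan is to take the mild solution $\varphi=\varphi_{\varphi_0}\in\mathcal{X}_{s,k}^{\omega}(\delta)$ from Proposition \ref{manifoldM} iii), which solves Duhamel's formula \eqref{classicalsolutioneq} with $u=\varphi_0$, and show that it is a classical solution. The bound $\Vert\varphi(\tau)\Vert_{X_{s,k}^{\omega}}\le\delta e^{-\nu\tau}$, with $\nu$ from Proposition \ref{Semigroupdec}, is then immediate from the definition of $\mathcal{X}_{s,k}^{\omega}$. Uniqueness among classical solutions with this decay also follows: any such solution satisfies Duhamel's formula by the standard semigroup argument, since $\frac{d}{d\tau'}S^X(\tau-\tau')\varphi(\tau')$ can be computed on $\mathcal{D}(\mathcal{L}^X)$. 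After shrinking the decay rate if necessary, it then lies in a small ball of $\mathcal{X}_{s,k}^{\omega}$, where the fixed point is unique.

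For the regularity, I would use the standard result for semilinear Cauchy problems (Pazy, Thm.~6.1.5, or Engel--Nagel). If the generator $\mathcal{L}^X$ generates a $C_0$-semigroup, the nonlinearity is continuously Fréchet differentiable (or at least locally Lipschitz) as a map $X_{s,k}^{\omega}\to X_{s,k}^{\omega}$, and the initial data lie in $\mathcal{D}(\mathcal{L}^X)$, then the mild solution is classical, i.e.\ $\varphi\in C([0,\infty),\mathcal{D}(\mathcal{L}^X))\cap C^1([0,\infty),X_{s,k}^{\omega})$.

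The key check is that $\mathcal{N}$ is $C^1$ on $X_{s,k}^{\omega}$. By \eqref{Rewrite}, $\mathcal{N}$ is a polynomial in $f$ whose coefficients are $\phi^{p-n}$ and $|y|^2\phi^{2p-1-n}$. Lemma \ref{WProductLemma} and the Banach algebra property in Lemma \ref{LemmaEmbedding} show that each monomial is a bounded multilinear map. One point needs care here. In \eqref{Ineq1} the weight $|\cdot|^2$ must be absorbed against $2p-1$ factors, while the terms with $n<2p-1$ carry powers of $\phi$ instead. But $|y|^2\phi^{2p-1-n}$ has the decay $\langle y\rangle^{2-2(2p-1-n)/(p-1)}$, and each power of $\phi$ supplies decay $\langle y\rangle^{-2/(p-1)}$, exactly like the weighted $L^\infty$ bound on an $X_{s,k}^{\omega}$ function used in the proof. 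So one either treats $\phi$ (a radial function lying in $X_{s,k}^{\omega}$ up to its behaviour at infinity) as one of the factors $f_l$, or repeats the proof of Lemma \ref{WProductLemma} with $\phi$ in place of those factors. Bounded multilinear maps are smooth, so $\mathcal{N}\in C^1$, and the estimate \eqref{Nonlinearity} gives the local Lipschitz property.

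With this, I would conclude as follows. Since $\varphi\in C([0,\infty),X_{s,k}^{\omega})$ and $\mathcal{N}$ is $C^1$, the function $\tau\mapsto \mathcal{N}(\varphi(\tau))$ is locally Lipschitz in $\tau$; this is seen from Duhamel's formula via a Gronwall bound on $\Vert\varphi(\tau+h)-\varphi(\tau)\Vert$, using that $S^X(h)\varphi_0-\varphi_0=O(h)$ because $\varphi_0\in\mathcal{D}(\mathcal{L}^X)$. On the reflexive (Hilbert) space $X_{s,k}^{\omega}$, Lipschitz forcing with data in the domain gives a strong solution. The $C^1$ property of $\mathcal{N}$ then upgrades this to a classical solution.

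The main obstacle I expect is exactly the $C^1$/Lipschitz-in-time step, that is, justifying the multilinear bounds for the terms containing the profile $\phi$. This is because $\phi$ itself need not belong to the homogeneous space with the needed low-frequency control for $p=3$. I would first try to check directly that $\phi\in \dot H^{s}\cap\dot H^{k}$, using $\phi\sim |y|^{-2/(p-1)}$ and $s>\frac32-\frac{2}{p-1}$, which makes the low-frequency part square-integrable. Then Lemma \ref{WProductLemma} applies verbatim.
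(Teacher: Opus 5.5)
Your plan follows the paper's argument. You take the solution $\varphi\in\mathcal{X}_{s,k}^{\omega}(\delta)$ of \eqref{classicalsolutioneq} from Proposition \ref{manifoldM} iii), upgrade it to a classical solution with the standard semilinear regularity result for data in $\mathcal{D}(\mathcal{L}^X)$, and read off the decay from the definition of $\mathcal{X}_{s,k}^{\omega}$. The paper needs only the local Lipschitz bound of Lemma \ref{Lipschitz} on the Hilbert space $X_{s,k}^{\omega}$ for the upgrade, so your $C^1$ check is extra but sound; your check that $\phi\in\dot H^{s}\cap\dot H^{k}$ (true since $s>\frac32-\frac{2}{p-1}$) is also sound, and Lemma \ref{Lipschitz} tacitly relies on it.

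The one step that fails as written is uniqueness. A bound $\Vert\varphi(\tau)\Vert\le Ce^{-\nu\tau}$ with an unspecified $C$ does not put a competitor in the small ball $\mathcal{X}_{s,k}^{\omega}(\delta)$, and lowering the rate does not make $C$ small. Instead, any classical solution satisfies \eqref{classicalsolutioneq}, and two solutions of \eqref{classicalsolutioneq} with the same data agree on every $[0,T]$ by Gronwall and \eqref{Nonlinearity}, so no decay assumption is needed.
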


\begin{proof}
Let $\delta, c >0$ be such that $\mathcal{M}$ is well-defined by Proposition \ref{manifoldM}. Let $\varphi_0 \in \mathcal{M} \cap \mathcal{D}(\mathcal{L}^X)$, then $\Vert \varphi_0 \Vert_{X_{s,k}^{\omega}} \leq \frac{\delta}{c}$ and $C(\varphi_0) =0$. Furthermore, there exists a unique $\varphi \in C([0, \infty), X_{s,k}^{\omega})$ satisfying equation \eqref{classicalsolutioneq} with $u = \varphi_0$ for all $\tau \geq 0$.\\
Since $\varphi_0 \in \mathcal{D}(\mathcal{L}^X)$ the local Lipschitz continuity of $\mathcal{N}$ implies that $\varphi \in C([0, \infty), \mathcal{D}(\mathcal{L}^X)) \cap C^1([0, \infty), X_{s,k}^{\omega})$ is indeed a classical solution to the above operator equation (see, e.g.~\cite{MR1691574}, p.~60, Proposition 4.3.9).

The existence of $\nu >0$ and the corresponding exponential decay of the solution follow from the fact that $\varphi \in \mathcal{X}_{s,k}^{\omega}(\delta)$, i.e.~$\Vert \varphi(\tau) \Vert_{X_{s,k}^{\omega}} \leq \delta e^{- \nu \tau}$ with $\nu > 0$ being the constant from \eqref{Semigroupdecay}.
\end{proof}

The last step to prove Theorem \ref{maintheorem} is given by the following Lemma showing the existence of suitable parameters for the initial data.

\begin{lemma} \label{Correctionvanish}
 For every sufficiently large $M >0$ there exists a sufficiently small $\delta >0$ such that the following holds. Let $v_0 \in X_{s,k}^{\omega}$ satisfy $\Vert v_0 \Vert_{X_{s,k}^{\omega}} \leq \frac{\delta}{M^2}$, then there exist parameters $a_{i,j} \in [-\frac{\delta}{M}, \frac{\delta}{M} ]$, $(i,j) \in I$ with $I := \{(i,j) : 1 \leq i \leq N_j : 1 \leq j \leq \tilde{N} \}$ such that 
\begin{align*}
C\Big(v_0 + \sum_{(i,j) \in I} a_{i,j} f_j^{i}\Big) =0.
\end{align*}
In particular, we have $v_0 + \sum_{(i,j) \in I} a_{i,j} f_j^{i} \in \mathcal{M}$. Furthermore, the parameters $a_{i,j} = a_{i,j}(v_0)$ depend Lipschitz continuously on the data, i.e.
\begin{align*}
\sum_{(i,j) \in I} | a_{i,j}(v_0) - a_{i,j}(\tilde{v}_0) | \lesssim \Vert v_0 - \tilde{v}_0 \Vert_{X_{s,k}^{\omega}},
\end{align*}
for all $v_0, \tilde{v}_0 \in X_{s,k}^{\omega}$ satisfying $\Vert v_0 \Vert_{X_{s,k}^{\omega}} \leq \frac{\delta}{M^2}$ and $\Vert \tilde{v}_0 \Vert_{X_{s,k}^{\omega}} \leq \frac{\delta}{M^2}$.
\end{lemma}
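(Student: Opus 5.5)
The plan is to reduce the equation $C(v_0 + \sum a_{i,j} f_j^i)=0$ to a fixed point problem on the finite-dimensional parameter space $\R^{|I|}$ and apply Banach's fixed point theorem. Abbreviate $a=(a_{i,j})_{(i,j)\in I}$ and $u_a := v_0 + \sum_{(i,j)\in I} a_{i,j} f_j^i$.

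\textbf{Step 1 (admissibility of $u_a$).} For $\|v_0\|_{X_{s,k}^{\omega}}\le \delta/M^2$ and $|a_{i,j}|\le \delta/M$ we have
\[
\|u_a\|_{X_{s,k}^{\omega}} \lesssim \delta/M^2 + \delta/M .
\]
Hence for $M$ large compared to the constant from Proposition \ref{manifoldM} (and to $\max \|f_j^i\|$), $u_a\in B_{\delta/c}$. Thus $\varphi_{u_a}$ and $C(u_a)$ are defined, and by Lemma \ref{Lipschitzcont} the map $a\mapsto \varphi_{u_a}$ is Lipschitz.

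\textbf{Step 2 (reformulation in coordinates).} Since $P_j f_l^i = \delta_{jl} f_j^i$ by \eqref{ProjectionPj} and the transversality $P_jP_l=0$, we have
\[
C(u_a) = P v_0 + \sum_{(i,j)} a_{i,j} f_j^i + \sum_j P_j \int_0^\infty e^{-\lambda_j\tau'}\mathcal N(\varphi_{u_a}(\tau'))\,d\tau'.
\]
The $f_j^i$ form a basis of $\rg(P)$. Expanding each of $Pv_0$ and the integral term in this basis (using the explicit form $P_j^i f = \alpha_j^i\langle f, f_j^i\rangle_{L^2_\sigma} f_j^i$ and Lemma \ref{Embedding2}), the condition $C(u_a)=0$ becomes $a = \Phi(a)$, where
\[
\Phi_{i,j}(a) := -\alpha_j^i\Big\langle v_0 + \int_0^\infty e^{-\lambda_j\tau'}\mathcal N(\varphi_{u_a}(\tau'))\,d\tau',\, f_j^i\Big\rangle_{L^2_\sigma}.
\]
If the $f_j^i$ within an eigenspace are not $L^2_\sigma$-orthogonal, first orthonormalize them; this is harmless since $\mathcal L$ is self-adjoint.

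\textbf{Step 3 (self-map and contraction).} First, $|\langle v_0,f_j^i\rangle|\lesssim \|v_0\|_{X_{s,k}^{\omega}}\le \delta/M^2$. Second, by \eqref{Nonlinearity} with $g=0$ and $\|\varphi_{u_a}(\tau)\|\le \delta e^{-\nu\tau}$, the integral term is bounded by
\[
\int_0^\infty e^{-\lambda_j\tau'}\delta^2 e^{-2\nu\tau'}\,d\tau' \lesssim \delta^2 ,
\]
using $\lambda_j\ge 0$. Thus $|\Phi(a)| \lesssim \delta/M^2 + \delta^2 \le \delta/M$ once $M$ is large and then $\delta\lesssim 1/M$ is small; this fixes the order of choosing $M$ and $\delta$ in the statement. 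For the contraction, the Lipschitz bound on $\mathcal N$ together with Lemma \ref{Lipschitzcont} gives
\[
|\Phi(a)-\Phi(\tilde a)| \lesssim \delta\, \|\varphi_{u_a}-\varphi_{u_{\tilde a}}\|_{\mathcal X_{s,k}^{\omega}} \lesssim \delta\, |a-\tilde a| .
\]
This is a contraction for small $\delta$, so Banach's fixed point theorem on the closed cube $[-\delta/M,\delta/M]^{|I|}$ yields a unique fixed point $a(v_0)$.

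\textbf{Step 4 (membership in $\mathcal M$ and Lipschitz dependence).} For the second step $\Phi$ also depends on $v_0$, through the $\langle v_0, f_j^i\rangle$ term and through $u_a$. Both dependences are Lipschitz, with constants $\lesssim 1$ and $\lesssim \delta$ respectively. Subtracting the fixed point equations for $v_0$ and $\tilde v_0$ gives
\[
|a(v_0)-a(\tilde v_0)| \lesssim \|v_0-\tilde v_0\| + \delta\,|a(v_0)-a(\tilde v_0)| ,
\]
and the last term is absorbed for small $\delta$. Finally, $u_{a(v_0)}\in\mathcal M$ follows from Proposition \ref{manifoldM} iv): $C(u_a)=0$, and $\|u_a\|\lesssim \delta/M \le \delta/\tilde c$ for $M$ large.

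\textbf{Main obstacle.} The delicate point is the bookkeeping of constants. One has to choose $M$ (relative to $c$, $\tilde c$ and the norms of the $f_j^i$) before $\delta$, and keep the implicit constants in Lemma \ref{Lipschitzcont} uniform for $\delta$ small. One also has to make sure that the contraction constant of $\Phi$ carries a factor $\delta$ and not merely $\delta/M$ times a large constant.
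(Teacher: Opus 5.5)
Your proposal is correct and follows essentially the same route as the paper. You use the rank-one projections $P_j^i$ to turn $C(v_0+\sum a_{i,j}f_j^i)=0$ into a fixed point problem $a=F^{v_0}(a)$ on the ball of radius $\delta/M$ in $\R^N$. The self-map bound $\lesssim \delta/M^2+\delta^2$ and the contraction constant $\lesssim \delta$ come from Lemma \ref{Lipschitz} and Lemma \ref{Lipschitzcont}. For Lipschitz dependence on $v_0$, the $\delta$-term is absorbed. Your explicit tracking of the factors $\alpha_j^i$, or orthonormalizing each eigenspace in $L^2_\sigma$, is slightly more careful than the paper, which tacitly uses orthonormal eigenfunctions when it writes the coordinate equation.
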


\begin{proof}
First, we observe that $\Vert v_0 + \sum_{(i,j) \in I} a_{i,j} f_j^{i} \Vert_{X_{s,k}^{\omega}} \leq \frac{\delta}{\tilde{c}} \leq \frac{\delta}{c}$ with $\delta, \tilde{c}, c > 0$ from Proposition \ref{manifoldM} for all $v_0 \in X_{s,k}^{\omega}$ with $\Vert v_0 \Vert_{X_{s,k}^{\omega}} \leq \frac{\delta}{M^2}$ and $a_{i,j} \in [-\frac{\delta}{M}, \frac{\delta}{M} ]$, $(i,j) \in I$ if $M >0$ is chosen sufficiently large. In the following, we write $a = (a_{i,j})_{(i,j) \in I}$ and denote by $\varphi_{v_0,a} \in \mathcal{X}_{s,k}^{\omega}(\delta)$ the unique fixed point of the operator $K$ associated with $v_0+ \Sigma_{(i,j)\in I}a_{i,j}f_j^i$. To find suitable parameters that guarantee the vanishing of the correction term, we run a fixed point argument. To do so, we refine each correction term $C_j$, $j \in \{1,...,\tilde{N} \}$ by 
\begin{align*}
C_j(\varphi, u) = \sum_{i=1}^{N_j} C_j^{i}(\varphi, u),
\end{align*}
with
\begin{align*}
C_j^{i}(\varphi, u) := P_j^{i} u + P_j^{i} \int_0^{\infty} e^{- \lambda_j \tau^{\prime}} \mathcal{N}(\varphi(\tau^{\prime})) d \tau^{\prime},
\end{align*}
for $\varphi \in \mathcal{X}_{s,k}^{\omega}$ and $u \in X_{s,k}^{\omega}$. Then $C(v_0 + \sum_{(i,j) \in I} a_{i,j} f_j^{i}) =0$ if and only if $C_j^{i}(v_0 + \sum_{(i,j) \in I} a_{i,j} f_j^{i}) =0$ for all $(i,j) \in I$. By definition of the correction terms via the projections $P_j^{i}$ vanishing of $C_j^{i}(v_0 + \sum_{(i,j) \in I} a_{i,j} f_j^{i})$ is equivalent to
\begin{align*}
\langle v_0 + \sum_{(i,j) \in I} a_{i,j} f_j^{i} + \int_0^{\infty} e^{-\lambda_j \tau^{\prime}} \mathcal{N}(\varphi_{v_0,a}(\tau^{\prime})) d \tau^{\prime}, f_j^{i} \rangle_{L^2_{\sigma}} = 0,
\end{align*}
which again is equivalent to 
\begin{align*}
\langle v_0, f_j^{i} \rangle_{L^2_{\sigma}} + \langle \int_0^{\infty} e^{-\lambda_j \tau^{\prime}} \mathcal{N}(\varphi_{v_0,a}(\tau^{\prime})) d \tau^{\prime}, f_j^{i} \rangle_{L^2_{\sigma}} + a_{i,j} =0.
\end{align*}
We define a continuous map $F^{v_0} : B_{\frac{\delta}{M}}(\R^N) \rightarrow B_{\frac{\delta}{M}}(\R^N)$ with $N = \sum_{j=1}^{\tilde{N}} \sum_{i=1}^{N_j}$ by 
\begin{align*}
F_{ij}^{v_0}(a) = - \langle v_0, f_j^{i} \rangle_{L^2_{\sigma}} - \langle \int_0^{\infty} e^{-\lambda_j \tau^{\prime}} \mathcal{N}(\varphi_{v_0,a}(\tau^{\prime})) d \tau^{\prime}, f_j^{i} \rangle_{L^2_{\sigma}},
\end{align*}
for $(i,j) \in I$ and $a = (a_{ij})_{(i,j) \in I} \in B_{\frac{\delta}{M}}(\R^N)$, which is well-defined by the continuous embedding of $X_{s,k}^{\omega}$ into $L^2_{\sigma}(\R^3)$ (see Lemma \ref{Embedding2}), the local Lipschitz continuity of $\mathcal{N}$ (see Lemma \ref{Lipschitz})  and the Lipschitz continuity of the map $u \mapsto \varphi_u$ (see Lemma \ref{Lipschitzcont}). Furthermore, we have
\begin{align*}
|F^{v_0}(a)| = \sum_{(i,j) \in I} |F^{v_0}_{ij}(a)| \lesssim \frac{\delta}{M^2} + \delta^2, 
\end{align*}
implying that $F^{v_0}(a) \in B_{\frac{\delta}{M}}(\R^N)$ for $a \in B_{\frac{\delta}{M}}(\R^N)$ provided that $\delta >0$ is sufficiently small. Next, we show that $F^{v_0}$ is a contraction. Let $a, \tilde{a} \in B_{\frac{\delta}{M}}(\R^N)$, then 
\begin{align*}
|F^{v_0}_{ij}(a) - F^{v_0}_{ij}(\tilde{a})| &\lesssim \delta \Vert \varphi_{v_0,a} - \varphi_{v_0, \tilde{a}} \Vert_{\mathcal{X}_{s,k}^{\omega}} \lesssim \delta \sum_{(i,j) \in I} |a_{ij} - \tilde{a}_{ij} |,
\end{align*}
which shows 
\begin{align*}
\sum_{(i,j) \in I} |F^{v_0}_{ij}(a) - F^{v_0}_{ij}(\tilde{a})| &\leq \frac{1}{2} \sum_{(i,j) \in I} |a_{ij} - \tilde{a}_{ij} |,
\end{align*}
if $\delta > 0$ is chosen sufficiently small. By the contraction mapping principle we infer the existence of a unique fixed point $a \in B_{\frac{\delta}{M}}(\R^N)$ of $F^{v_0}(a) = a$. We denote this fixed point by $a(v_0) = (a_{i,j}(v_0))_{(i,j) \in I}$, which by construction satisfies $C(v_0 + \sum_{(i,j) \in I} a_{i,j} f_j^{i})=0$.\\
\\
For the Lipschitz continuous dependency of the parameters on the initial data let $v_0, \tilde{v}_0 \in X_{s,k}^{\omega}$ satisfy $\Vert v_0 \Vert_{X_{s,k}^{\omega}}, \Vert \tilde{v}_0 \Vert_{X_{s,k}^{\omega}} \leq \frac{\delta}{M^2}$ and let $a = a(v_0)$, $\tilde{a} = {a}(\tilde{v}_0)$ be the corresponding unique fixed points of $F^{v_0}$ and $F^{\tilde{v}_0}$ respectively. Then we find
\begin{align*}
|a_{i,j}-\tilde{a}_{i,j}| &\lesssim | \langle v_0 - \tilde{v}_0 , f_{j}^{i} \rangle_{L^2_{\sigma}} | + \int_0^{\infty} e^{-\lambda_j \tau^{\prime}} \Vert \mathcal{N}(\varphi_{v_0,a}(\tau^{\prime})) - \mathcal{N}(\varphi_{\tilde{v}_0,\tilde{a}}(\tau^{\prime})) \Vert_{X_{s,k}^{\omega}} d \tau^{\prime}\\
&\lesssim \Vert v_0 - \tilde{v}_0 \Vert_{X_{s,k}^{\omega}} + \delta \left( \Vert v_0 - \tilde{v}_0 \Vert_{X_{s,k}^{\omega}} + \sum_{(i,j) \in I} |a_{i,j}-\tilde{a}_{i,j}| \right),
\end{align*}
implying that
\begin{align*}
\sum_{(i,j) \in I} |a_{i,j}-\tilde{a}_{i,j}| \lesssim \Vert v_0 - \tilde{v}_0 \Vert_{X_{s,k}^{\omega}},
\end{align*}
provided $\delta >0$ is chosen sufficiently small.
\end{proof}

\subsection{Proof of Theorem \ref{maintheorem}} Let $\gamma := \frac{\delta}{M^2}$ with $\delta >0$ and $M>0$ such that Lemma \ref{Correctionvanish} holds. Let $v_0 \in \mc S(\R^3)$ satisfy $\Vert v_0 \Vert_{X_{s,k}^{\omega}} \leq \gamma$, then there exist parameters $a_{i,j} = a_{i,j}(v_0) \in [-\frac{\delta}{M}, \frac{\delta}{M} ]$, $(i,j) \in I$ with $I := \{(i,j) : 1 \leq i \leq N_j : 1 \leq j \leq \tilde{N} \}$ depending Lipschitz continuously on $v_0$ with respect to $X_{s,k}^{\omega}$ and a solution $\varphi \in C([0, \infty), \mathcal{D}(\mathcal{L}^X)) \cap C^1([0, \infty), X_{s,k}^{\omega})$ to the problem
\begin{align} \label{EQ1}
\begin{cases} \partial_{\tau} \varphi = L \varphi (\tau, \cdot) + \mathcal{N}(\varphi(\tau, \cdot)), \\ \varphi(0, \cdot) = v_0 + \sum_{(i,j) \in I} a_{i,j} f_j^{i},
\end{cases}
\end{align}
by Lemma \ref{Correctionvanish} and Proposition \ref{SolutionC1}. To establish higher regularity, we first observe that $\varphi$ solves \eqref{EQ1} pointwise by the embedding of $X_{s,k}^{\omega}$ into $L^{\infty}(\R^3)$, see Lemma \ref{LemmaEmbedding}. Next, we exploit that $\mathcal{L}^X$ is a bounded perturbation of $\mathcal{L}_0^X$ and use Duhamel's formula for the free semigroup to find
\begin{align*}
\varphi(\tau) = S_0^X(\tau) U(v_0) + \int_0^{\tau} S_0^X(\tau - \tau^{\prime}) \left( \mc L_1 \varphi(\tau^{\prime}) + \mathcal{N}(\varphi(\tau^{\prime} )) \right) d \tau^{\prime}, \quad \text{for $\tau \geq 0$},
\end{align*}
where $U(v_0) := v_0 + \sum_{(i,j) \in I} a_{i,j} f_j^{i}$. Smoothing of the free semigroup (see Appendix, A.4 in \cite{MR4730409}) implies $\varphi \in X_{s,\tilde{k}}^{\omega}$ for all $\tau \geq 0$ and $\tilde{k} \geq k$. The continuous embedding $X_{s,\tilde{k}}^{\omega} \hookrightarrow C^{m}(\R^3)$ with $\tilde{k} \geq \frac{3}{2} +m$ shows $\varphi(\tau) \in C^{\infty}(\R^3)$ for all $\tau \geq 0$. A generalized version of Schwarz's theorem (see \cite{MR385023}, p.~235, Theorem 9.41) allows the exchange of $\mathcal{L}^X$ and $\partial_{\tau}$. Furthermore, since we are dealing with an odd power nonlinearity, differentiation of the right hand side of equation \eqref{EQ1} with respect to $\tau$ gives higher regularity also in $\tau$. Hence, mixed derivatives of all orders in $\tau$ and the spatial variable $y$ exist, whichs shows $\varphi \in C^{\infty}([0, \infty) \times \R^3)$.\\
Again by Proposition \ref{SolutionC1}, $\varphi$ satisfies $\Vert \varphi(\tau) \Vert \lesssim e^{-\nu \tau}$ for $\tau \geq 0$. Translating this back into physical coordinates $x$ and $t$ via
\begin{align*}
u(t,x) = (1-t)^{-\frac{1}{p-1}} \left( \phi \left( \frac{|x|}{\sqrt{1-t}} \right) + \varphi \left( \log \left( \frac{1}{1-t} \right), \frac{x}{\sqrt{1-t}} \right) \right),
\end{align*}
gives the solution $u$ with $\tilde{\varphi}(t, \cdot) := \varphi \left( \log \left( \frac{1}{1-t} \right), \cdot \right)$ for $t \in [0,1)$ as claimed in Theorem \ref{maintheorem}. Furthermore, we have
\begin{align*}
\Vert \tilde{\varphi}(t, \cdot) \Vert_{X_{s,k}^{\omega}} = \Vert \varphi(\tau, \cdot) \Vert_{X_{s,k}^{\omega}} \leq \delta e^{-\nu \tau} = \delta (1-t)^{\nu},
\end{align*}
which shows \eqref{Conv} by the continuous embedding of $X_{s,k}^{\omega} \hookrightarrow L^{\infty}(\R^3)$ by Lemma \ref{LemmaEmbedding}.

\subsection{Stable blowup}

Let $p=3$ and $c \in (\frac{1}{4}, \frac{1}{3})$. From Lemma \ref{K0} we infer that the linear operator possesses only one positive eigenvalue given by $\lambda =1$ and the corresponding eigenspace is one-dimensional and spanned by the function $g(x) = (b+|x|^2)^{-\frac{p}{p-1}}$. In particular, the correction term defined in \eqref{Def:Corr1}-\eqref{Def:Corr2} consists only of a single term. Now, instead of correcting the initial data as in Lemma \ref{Correctionvanish}, we exploit the origin of this instability and adjust the blowup time $T$ accordingly. In particular, we consider Eq.~\eqref{CE} with initial data $\mathcal{U}(v_0, T)$ defined in \eqref{InitialU} for $T > 0$.

\begin{proposition}\label{Prop:StableBlowup}
There is a large enough $M > 0$ and a small enough $\delta > 0$ such that the following holds. For all real-valued  $v_0 \in X_{s,k}^{\omega}$ with $\Vert v_0 \Vert_{X_{s,k}^{\omega}} \leq \frac{\delta}{M^2}$, there exists a $T = T(v_0) \in [1-\frac{\delta}{M}, 1 + \frac{\delta}{M}]$  and a unique $\varphi \in C([0, \infty), X_{s,k}^{\omega})$ that satisfies 
\begin{align}\label{Eq:DuhU}
\varphi(\tau) = S^X(\tau)  \mathcal{U}(v_0, T(v_0)) + \int_0^{\tau} S^X(\tau-\tau^{\prime}) \mathcal{N}(\varphi(\tau^{\prime}))d \tau^{\prime}, \quad \tau \geq 0
\end{align}
for all $\tau \geq 0$ such that 
\[ \Vert \varphi(\tau) \Vert_{X_{s,k}^{\omega}} \leq \delta e^{- \nu \tau}, \quad \forall \tau \geq 0.  \]
\end{proposition}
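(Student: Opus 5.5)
Throughout, the only unstable eigenvalue is $\lambda_1=1$, with one-dimensional eigenspace spanned by $g$ (Lemma \ref{K0}). The plan is the standard Lyapunov--Perron argument of Lemma \ref{Correctionvanish}, with the free parameters $a_{i,j}$ replaced by the blowup time $T$. Four steps are needed.

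\textbf{Step 1 (size of the data).} I first control $\mathcal{U}(v_0,T)$ for $T$ near $1$. Write
\[
\mathcal{U}(v_0,T)=T^{\frac{1}{p-1}}v_0(\sqrt{T}\,\cdot)+\big(T^{\frac{1}{p-1}}\phi(\sqrt{T}|\cdot|)-\phi(|\cdot|)\big).
\]
The first term is bounded by $\Vert v_0\Vert_{X_{s,k}^{\omega}}$ up to a constant uniform in $T\in[\frac12,\frac32]$: dilations act boundedly on homogeneous Sobolev norms, and $\Omega_{ij}$ commutes with dilations. For the second term I use the self-similar structure. Since $u_T(0,x)=T^{-\frac{1}{p-1}}\phi(|x|/\sqrt{T})$, the map $T\mapsto T^{\frac{1}{p-1}}\phi(\sqrt{T}|y|)$ is smooth, and its $T$-derivative at $T=1$ is an explicit multiple of $g$. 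A direct computation gives $\frac{1}{p-1}\phi+\frac12 r\phi' = \kappa\, g$ with $\kappa\neq0$. By Taylor's formula,
\[
T^{\frac{1}{p-1}}\phi(\sqrt{T}|\cdot|)-\phi=\kappa(T-1)g+R(T),\qquad \Vert R(T)\Vert_{X_{s,k}^{\omega}}\lesssim |T-1|^2 .
\]
All functions involved are radial and smooth with decay $\langle y\rangle^{-2/(p-1)-2}$ or better, so they lie in $X_{s,k}^{\omega}$ (as in Lemma \ref{VginSpace}), and $T\mapsto\mathcal{U}(v_0,T)$ is continuous, indeed Lipschitz, into $X_{s,k}^{\omega}$.

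\textbf{Step 2 (fixed point for each $T$).} Hence $\Vert\mathcal{U}(v_0,T)\Vert\lesssim \frac{\delta}{M^2}+\frac{\delta}{M}$ for $T\in[1-\frac{\delta}{M},1+\frac{\delta}{M}]$, which is at most $\frac{\delta}{c}$ once $M$ is large. Lemma \ref{OpK} then gives a unique fixed point $\varphi_T$ of $K(\cdot,\mathcal{U}(v_0,T))$ in $\mathcal{X}_{s,k}^{\omega}(\delta)$. By Lemma \ref{Lipschitzcont}, $\varphi_T$ depends Lipschitz continuously on $T$.

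\textbf{Step 3 (killing the correction).} This fixed point solves \eqref{Eq:DuhU} exactly when the correction term vanishes, that is, when $C(\varphi_T,\mathcal{U}(v_0,T))=0$. Taking the $L^2_\sigma$ inner product with $g$, this is a single scalar equation:
\[
\kappa(T-1)\Vert g\Vert_{L^2_\sigma}^2+\langle T^{\frac{1}{p-1}}v_0(\sqrt T\cdot)+R(T),g\rangle_{L^2_\sigma}+\Big\langle\int_0^\infty e^{-\tau'}\mathcal{N}(\varphi_T(\tau'))\,d\tau',g\Big\rangle_{L^2_\sigma}=0 .
\]
Solving for $T-1$ gives $T-1=F(T)$. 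The three pieces of $F(T)$ are bounded as follows:
\begin{itemize}
\item the $v_0$ term is $O(\delta/M^2)$;
\item the remainder term is $O(\delta^2/M^2)$;
\item the nonlinear term is $O(\delta^2)$, by Lemma \ref{Lipschitz} and Lemma \ref{Embedding2}.
\end{itemize}
So for $\delta$ small (after fixing $M$), $F$ maps the interval $[1-\frac{\delta}{M},1+\frac{\delta}{M}]$ into itself, after translating by $1$. I then conclude with one of two arguments. Brouwer's (intermediate value) theorem suffices, since $F$ is continuous. Alternatively, $F$ is a contraction: the differences are $\lesssim(\frac{1}{M^2}+\frac{\delta}{M}+\delta)|T-\tilde T|$, using the Lipschitz dependence from Step 1 and Lemma \ref{Lipschitzcont}.

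\textbf{Step 4 (conclusion).} With this $T$, the fixed point $\varphi=\varphi_T$ satisfies \eqref{Eq:DuhU} and the bound $\Vert\varphi(\tau)\Vert\le\delta e^{-\nu\tau}$. Uniqueness comes from the contraction in Lemma \ref{OpK}.

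\textbf{Main obstacle.} I expect the delicate part to be Step 1. One must verify precisely that the $T$-derivative of the rescaled profile is a nonzero multiple of the eigenfunction $g$, so that the leading coefficient $\kappa\Vert g\Vert^2_{L^2_\sigma}$ is nonzero. One must also check that the second-order remainder and the Lipschitz dependence of $v_0(\sqrt T\,\cdot)$ on $T$ are controlled in $X_{s,k}^{\omega}$. For the $v_0$ part, Lipschitz dependence in $T$ may fail for general $v_0$. If so, I would fall back on continuity in $T$ (by density of $C_c^\infty$) together with the intermediate value theorem, which needs only continuity.
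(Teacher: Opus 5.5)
Your proposal is correct and follows essentially the paper's route. You Taylor-expand $\mathcal{U}(v_0,T)$ at $T=1$ with leading term a nonzero multiple of $g$, project the correction term onto $\mathrm{span}(g)$ to get a scalar equation $T=1+F(T)$ with $|F(T)|\lesssim \frac{\delta}{M^2}+\delta^2$, and conclude by the intermediate value theorem, which needs only continuity in $T$, as you rightly note. One point to tighten concerns uniqueness. The paper obtains it in all of $C([0,\infty),X_{s,k}^{\omega})$ by the standard Gronwall argument, whereas the contraction of Lemma~\ref{OpK} gives it only within the decaying class, and even there you need the short extra step that any Duhamel solution with $\Vert\varphi(\tau)\Vert_{X_{s,k}^{\omega}}\le\delta e^{-\nu\tau}$ has $C(\varphi,\mathcal{U}(v_0,T))=0$. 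That step follows since $P\varphi(\tau)=e^{\tau}\big(P\mathcal{U}(v_0,T)+\int_0^{\tau}e^{-\tau'}P\mathcal{N}(\varphi(\tau'))\,d\tau'\big)$ must decay.
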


\begin{proof}
The proof follows a basic scheme detailed in e.g.~\cite{MR4730409}. First, one can easily show that for any $0 < \delta \leq \frac{1}{2}$ and for every $v_0 \in X_{s,k}^{\omega}$ the map 
\begin{align*}
T \mapsto \mathcal{U}(v_0, T) : [1-\delta, 1+ \delta] \rightarrow X_{s,k}^{\omega},
\end{align*}
is continuous. Furthermore, 
\begin{align*}
\Vert \mathcal{U}(v_0, T) \Vert_{X_{s,k}^{\omega}} \lesssim \Vert v_0 \Vert_{X_{s,k}^{\omega}} + |T-1|,
\end{align*}
for all $v_0 \in X_{s,k}^{\omega}$ and $T \in [\frac{1}{2},\frac{3}{2}]$, see the proof of Lemma 5.2 in \cite{MR4730409}.

Now, by choosing $M$ sufficiently large,  for any  $v_0 \in X_{s,k}^{\omega}$ with $\Vert v_0 \Vert_{X_{s,k}^{\omega}} \leq \frac{\delta}{M^2}$ and any $T \in  [1-\frac{\delta}{M}, 1 + \frac{\delta}{M}]$ we can achieve $\| \mathcal{U}(v_0, T) \|_{X_{s,k}^{\omega}} \leq \frac{\delta}{c}$, where $c$ is the constant from Lemma \ref{OpK}. Consequently, for $\delta$ sufficiently small, for every such pair $(v_0,T)$ there exists a unique solution $\varphi_{(v_0,T)}$ in $\mathcal{X}_{s,k}^{\omega}(\delta)$ satisfying $K(\varphi_{(v_0,T)},\mathcal{U}(v_0, T) ) = \varphi_{(v_0,T)}$. Furthermore, the map $T \mapsto \varphi_{(v_0,T)}$ is continuous.

The crucial step is to show that the correction term contained in $K$ vanishes for fixed $v_0$ and a suitable choice of $T=T(v_0)$. This is based on a Taylor expansion of the term $T^{\frac{1}{p-1}} \phi( \sqrt{T} \cdot) - \phi(\cdot)$ in $T=1$ appearing in the initial data operator $\mathcal{U}(v_0, T)$. Namely, we have
\begin{align*}
\mathcal{U}(v_0,T) = T^{\frac{1}{p-1}} v_0(\sqrt{T} \cdot) + \tilde{C} (1-T) g(\cdot) + (1-T)^2 R(T, \cdot),
\end{align*}
for some non-zero constant $\tilde{C} \in \R$. The remainder satisfies $\Vert R(T, \cdot) \Vert_{X_{s,k}^{\omega}} \lesssim 1$ for all $T$ close to $1$. Since the range of the correction term is spanned by the eigenfunction $g$, it suffices to show that the projection onto $\mathrm{span}(g)$ vanishes. This, together with the above expansion, leads to an equation of the form
\begin{align}\label{Eq:FPT}
T = F(T) + 1 
\end{align}
for a continuous function $F$ satisfying $|F(T)| \lesssim \frac{\delta}{M^2} + \delta^2$. We refer the reader to the proof of Theorem 5.4 in \cite{MR4730409} for the details. By choosing $M$ sufficiently large and $\delta$ sufficiently small, an application of the intermediate value theorem implies the existence of a solution to \eqref{Eq:FPT}. Hence, the corresponding $\varphi_{(v_0,T(v_0))}$ satisfies \eqref{Eq:DuhU}. 

The uniqueness of the solution in $C([0, \infty), X_{s,k}^{\omega})$ follows from standard arguments, see again \cite{MR4730409}, Theorem 5.4. 
\end{proof}

\subsection{Proof of Theorem \ref{maintheorem2}} Starting with Proposition \ref{Prop:StableBlowup} one argues as in Proposition \ref{SolutionC1} to obtain a unique solution of the operator equation \eqref{CE}. The rest of the argument is analogous to the proof of Theorem \ref{maintheorem} modulo the form of the initial data.

\appendix
\section{Bound on the number of negative eigenvalues} \label{GGMTv}

Let $A$ be a linear operator defined on $\mathcal{D}(A) = C^{\infty}_c(\R^+)$ and given by
\begin{align*}
[Au](r) := -u^{\prime \prime}(r) + \frac{a}{r^2} u(r) +  b r^2 u(r) +  V(r)u(r),
\end{align*}
for $a > -\frac{1}{4}$, $a \neq 0$, $b >0$ and $V \in C[0,\infty)$ a bounded, real-valued potential. We denote the self-adjoint closure of  $A$ by  $\mathcal{A}$.
To formulate the theorem, we write the operator as 
\begin{align*}
[Au](r) := -u^{\prime \prime}(r) + \frac{\alpha}{r^2} u(r) + Q(r)u(r),
\end{align*}
for $- \frac14 < \alpha < a$ and $Q(r) := \frac{(a - \alpha)}{r^2} + b r^2 + V(r)$. 
 
 \begin{theorem} \label{TheoremA}
Let $Q = Q_{+} - Q_{-}$ where $Q_{\pm} \geq 0$ and let $\kappa \geq \frac{3}{2}$. Then the number of negative eigenvalues $\nu(\mathcal{A})$ of $\mathcal{A}$ is bounded by
 \begin{align} \label{Boundoneigenvalues}
 \nu(\mathcal{A}) \leq \frac{(\kappa-1)^{\kappa-1} \Gamma(2 \kappa)}{(4 \alpha +1)^{\kappa - \frac{1}{2}} \kappa^{\kappa} \Gamma(\kappa)^2} \int_0^{\infty} r^{2 \kappa -1} Q_{-}(r)^{\kappa} dr.
 \end{align}
 \end{theorem}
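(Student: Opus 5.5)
Our plan is to follow the classical GGMT argument (Glaser--Martin--Grosse--Thirring): reduce the eigenvalue count to a Birman--Schwinger operator, and bound that operator's trace of order $\kappa$ by an explicit integral kernel computed from the zero-energy Green's function of $-\frac{d^2}{dr^2}+\frac{\alpha}{r^2}$ on $(0,\infty)$.

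First, we discard $Q_+$ by the min-max principle: since $Q_+\ge 0$, $\nu(\mathcal A)\le \nu(H_0 - Q_-)$ with $H_0 := -\frac{d^2}{dr^2}+\frac{\alpha}{r^2}$ (Friedrichs extension, which is nonnegative because $\alpha>-\frac14$, by Hardy's inequality). Since the self-adjoint closures may differ in domain, we would argue at the level of quadratic forms on $C_c^\infty(\R^+)$, which is a form core. This is also why $a\neq 0$ (the limit circle issue at $0$) is harmless once we pass to $\alpha$.

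Second, we apply the Birman--Schwinger principle. The number of negative eigenvalues of $H_0-Q_-$ equals the number of eigenvalues $>1$ of $K_E=Q_-^{1/2}(H_0+E)^{-1}Q_-^{1/2}$ as $E\to 0^+$. Hence, for any $\kappa\ge1$,
\[ \nu \le \operatorname{tr}(K_0^{\kappa}). \]
The zero-energy Green's function is explicit: with $\mu=\sqrt{\alpha+\frac14}>0$,
\[ G(r,r')=\frac{1}{2\mu}\, r_<^{1/2+\mu} r_>^{1/2-\mu}. \]
For non-integer $\kappa$ one cannot simply take powers of the kernel. We therefore follow GGMT and use an operator inequality, comparing $K_0$ with a modified operator obtained by a multiplicative change of variables $r=e^{t}$. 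This conjugates $H_0$ into a translation-invariant operator on $\R$, namely $-\partial_t^2+\mu^2$ acting on $e^{-t/2}u$, with weight $e^{2t}Q_-(e^t)$. After this, Hölder/Lieb--Thirring-type trace inequalities of the form $\operatorname{tr}(B^{1/2}A B^{1/2})^\kappa\le \operatorname{tr}(B^{\kappa/2}A^\kappa B^{\kappa/2})$ (Araki--Lieb--Thirring), combined with an optimization over an auxiliary parameter, yield a bound $\int_{\R} W(t)^\kappa\,dt$ times a Fourier-side constant $\int_{\R}(\xi^2+\mu^2)^{-\kappa}\,d\xi$-like factor. Here $W(t)=e^{2t}Q_-(e^t)$, and $dt=dr/r$ gives exactly $\int_0^\infty r^{2\kappa-1}Q_-^\kappa\,dr$.

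Third, we compute the constant. The factor $(4\alpha+1)^{-(\kappa-\frac12)}=(2\mu)^{-(2\kappa-1)}$ arises from scaling $\mu$ out of the one-dimensional integral. The Gamma quotient $\Gamma(2\kappa)/\Gamma(\kappa)^2$ together with $(\kappa-1)^{\kappa-1}/\kappa^\kappa$ comes from GGMT's optimization step: one splits the operator as $\frac{\kappa-1}{\kappa}$ versus $\frac1\kappa$ portions and evaluates a Beta integral. The restriction $\kappa\ge \frac32$ is needed for the convexity/trace inequality to be applicable in this form. We expect this to be the main obstacle, namely reproducing GGMT's non-integer-$\kappa$ trace argument with the correct sharp constant. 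Our first attempt would be to transcribe GGMT's proof for $\ell$-wave radial Schrödinger operators with $\ell(\ell+1)$ replaced by $\alpha$, i.e. $\ell+\frac12\mapsto\mu$, checking that every step only uses $\mu>0$.
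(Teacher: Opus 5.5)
Your preliminary reductions are sound. Dropping $Q_+$ by form monotonicity is legitimate. The logarithmic substitution $r=e^t$, $u=r^{1/2}w$ turns the zero-energy form into $\int_{\R}(w'^2+\mu^2w^2-Ww^2)\,dt$ with $\mu^2=\alpha+\frac14$ and $W(t)=e^{2t}Q_-(e^t)$, and it is essentially the substitution the paper uses.

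The gap is the core step: a Birman--Schwinger trace bound cannot give the stated constant. Carrying out what you describe (Araki--Lieb--Thirring, then the diagonal of the Fourier multiplier) yields
\[
\nu(\mathcal A)\le\operatorname{tr}K_0^\kappa\le\frac{1}{2\pi}\int_{\R}\frac{d\xi}{(\xi^2+\mu^2)^{\kappa}}\int_{\R}W(t)^\kappa\,dt=\frac{\Gamma(2\kappa-1)}{\Gamma(\kappa)^2\,(4\alpha+1)^{\kappa-\frac12}}\int_0^\infty r^{2\kappa-1}Q_-(r)^\kappa\,dr .
\]
This constant exceeds the claimed one by the factor $\kappa^\kappa/\bigl((2\kappa-1)(\kappa-1)^{\kappa-1}\bigr)$. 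That factor is $>1$ for every $\kappa>1$: about $1.30$ at $\kappa=\frac32$, and tending to $e/2$ as $\kappa\to\infty$.

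A cleverer estimate of the trace cannot rescue this. For $W=\lambda\mathbf 1_{[-L,L]}$, a Szeg\H{o}-type limit theorem gives $\operatorname{tr}K_0^\kappa=(1+o(1))\,\frac{1}{2\pi}\int_{\R}(\xi^2+\mu^2)^{-\kappa}d\xi\int_{\R}W^\kappa\,dt$ as $L\to\infty$. So any argument passing through $\nu\le\operatorname{tr}K_0^\kappa$ has already lost the constant. Your ``splitting into $\frac{\kappa-1}{\kappa}$ and $\frac1\kappa$ portions with a Beta integral'' is not a step of GGMT's proof, which is not a trace argument at all.

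What is missing is the nodal-domain argument used in the paper.
\begin{enumerate}[i)]
\item By oscillation theory, $\nu(\mathcal A)$ equals the number of zeros $r_1<\dots<r_\nu$ in $(0,\infty)$ of the regular solution $u_0$ of $Au=0$; put $r_0=0$.
\item On each $(r_k,r_{k+1})$, $u_0$ vanishes at both ends, so testing the equation with $u_0$ gives $0=\int_{r_k}^{r_{k+1}}\bigl(u_0'^2+\frac{\alpha}{r^2}u_0^2+Qu_0^2\bigr)dr$.
\item H\"older in $L^\kappa\times L^{\kappa/(\kappa-1)}$ with weights $r^{\pm(2\kappa-1)/\kappa}$ then yields $\int_{r_k}^{r_{k+1}}r^{2\kappa-1}Q_-^\kappa\,dr\ge(\inf F_\alpha)^\kappa$. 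Here $F_\alpha(u)$ is $\int_0^\infty(u'^2+\frac{\alpha}{r^2}u^2)\,dr$ divided by $\bigl(\int_0^\infty r^{-\frac{2\kappa-1}{\kappa-1}}|u|^{\frac{2\kappa}{\kappa-1}}dr\bigr)^{\frac{\kappa-1}{\kappa}}$.
\item Your substitution, scaled as $r=e^{z/\sqrt{4\alpha+1}}$, $w=u/\sqrt r$, turns $F_\alpha$ into $(4\alpha+1)^{\frac{2\kappa-1}{2\kappa}}$ times the one-dimensional Gagliardo--Nirenberg quotient $\int_{\R}(w'^2+\frac14w^2)\,dz/\|w\|_{L^{2\kappa/(\kappa-1)}}^2$. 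Its sharp infimum $\frac{\kappa}{\kappa-1}\bigl(\frac{(\kappa-1)\Gamma(\kappa)^2}{\Gamma(2\kappa)}\bigr)^{1/\kappa}$ is known.
\item Each of the $\nu$ nodal intervals must carry this full amount of $\int r^{2\kappa-1}Q_-^\kappa$, so summing gives exactly the claimed bound.
\end{enumerate}
The constant in the theorem is the reciprocal of $(4\alpha+1)^{\kappa-\frac12}$ times the $\kappa$-th power of this sharp Sobolev constant. The Gamma quotient and $(\kappa-1)^{\kappa-1}/\kappa^\kappa$ come from that constant, not from a trace computation.
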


 \begin{proof}
First, the fact that $\mc A$ is bounded from below and has compact resolvent implies that the spectrum of $\mc A$ in the left half-plane consists of finitely many real eigenvalues. Let $u_0$ denote the regular solution of $Au = 0$, i.e.~the one which is in $L^2(0,c)$ for some $c >0$. We note the assumption on $a$ implies $u_0(r) \sim r^{\gamma}$ around zero for some $\gamma > \frac{1}{2}$. From oscillation theory, see e.g.~\cite{Weidmann1987, GesSimTes1996} it follows that the number of eigenvalues $\nu(\mathcal{A})$ is given by the number of zeros of $u_0$ in $(0,\infty)$.  Let $\{r_0,...,r_{\nu(\mathcal{A})} \}$ be the ordered set of zeros of $u_0$ with $r_0 =0$. Then, for each pair $(r_k,r_{k+1})$ with $k \in \{0,...,\nu(\mathcal{A})-1 \}$ we find 
 \begin{align*}
 0 &= \int_{r_k}^{r_{k+1}} u_0^{\prime}(r)^2 + \frac{\alpha}{r^2} u_0(r)^2 + Q(r) u_0(r)^2 dr \\
 &\geq \int_{r_k}^{r_{k+1}} u_0^{\prime}(r)^2 + \frac{\alpha}{r^2} u_0(r)^2 dr - \int_{r_k}^{r_{k+1}} Q_{-}(r) u_0(r)^2 dr \\
  &\geq \int_{r_k}^{r_{k+1}} u_0^{\prime}(r)^2 + \frac{\alpha}{r^2} u_0(r)^2 dr - \Vert r^{\frac{2 \kappa -1}{\kappa}} Q_{-} \Vert_{L^{\kappa}([r_k, r_{k+1}])} \Vert r^{-\frac{2 \kappa -1}{\kappa}} u_0^2 \Vert_{L^{\frac{\kappa}{\kappa-1}}([r_k, r_{k+1}])},
 \end{align*}
where we used H\"older's inequality with $\kappa \geq 1$. We note that $ \Vert r^{-\frac{2 \kappa -1}{\kappa}} u_0^2 \Vert_{L^{\frac{\kappa}{\kappa-1}}([0, r_1])} < \infty$. Since $u_0 \neq 0$ in $(r_k, r_{k+1})$ we get for each $k$,
 \begin{align*}
 \int_{r_k}^{r_{k+1}} r^{2 \kappa -1} Q_{-}(r)^{\kappa} dr &\geq  \frac{\left( \int_{r_k}^{r_{k+1}} u_0^{\prime}(r)^2 + \frac{\alpha}{r^2} u_0(r)^2 dr \right)^{\kappa}}{\left( \int_{r_k}^{r_{k+1}} r^{-\frac{2 \kappa -1}{\kappa -1}} |u_0(r)|^{\frac{2 \kappa}{\kappa -1}} dr \right)^{\kappa -1}}\\
 &\geq \left( \inf_{u \in H^1(\R^+) \setminus \{0 \}} F_{\alpha}(u) \right)^{\kappa},
 \end{align*}
where we define the functional $F_{\alpha}$ for $u \in H^1(\R^+) \setminus \{ 0 \}$ by
 \begin{align*}
 F_{\alpha}(u) := \frac{\int_{0}^{\infty} u^{\prime}(r)^2 + \frac{\alpha}{r^2} u(r)^2 dr}{\left( \int_{0}^{\infty} r^{-\frac{2 \kappa -1}{\kappa -1}} |u(r)|^{\frac{2\kappa}{\kappa -1}} dr \right)^{\frac{\kappa -1}{\kappa}}}.
 \end{align*}
and extending $u_0$ by zero outside of $[r_k, r_{k+1}]$.
 By the change of variables $r = e^{\frac{z}{\sqrt{4 \alpha +1}}}$ and $\frac{u(r)}{ \sqrt{r}} = w(z)$ we infer that
 \begin{align*}
 F_{\alpha}(u) = (4 \alpha +1)^{\frac{2 \kappa -1}{2 \kappa}} F(w),
 \end{align*}
 where
 \begin{align*}
 F(w) = \frac{\int_{- \infty}^{\infty} w^{\prime}(z)^2 + \frac{1}{4} w(z)^2 dz}{\left( \int_{- \infty}^{\infty} |w(z)|^{\frac{2 \kappa}{\kappa -1}} dz \right)^{\frac{\kappa -1}{\kappa}}}.
 \end{align*}
 From \cite{CreDonSchSne2017}, Lemma A.2 we know 
 \begin{align*}
 \inf_{w \in H^1(\R) \setminus \{0\}} F(w) = \frac{\kappa}{\kappa -1} \left( \frac{(\kappa -1) \Gamma(\kappa)^2}{\Gamma(2 \kappa)} \right)^{\frac{1}{\kappa}},
 \end{align*}
 which implies
 \begin{align*}
  \int_{0}^{\infty} r^{2 \kappa -1} Q_{-}(r)^{\kappa} dr &=   \sum_{k=0}^{\nu(\mathcal{A})-1}  \int_{r_k}^{r_{k+1}} r^{2 \kappa -1} Q_{-}(r)^{\kappa} dr +  \int_{r_{\nu(\mathcal{A})}}^{\infty} r^{2 \kappa -1} Q_{-}(r)^{\kappa} dr\\
  &\geq \nu(\mathcal{A}) (4 \alpha +1)^{\frac{2 \kappa -1}{2}} \frac{\kappa^{\kappa} \Gamma(\kappa)^2}{(\kappa -1)^{\kappa -1} \Gamma(2 \kappa)},
 \end{align*}
 leading to \eqref{Boundoneigenvalues}.
 \end{proof}
 
 \section{Discussion on unstable eigenvalues} \label{Discussion}
In the following we discuss the number of unstable eigenvalues of the linear operator $\mathcal{L} : \mathcal{D}(\mathcal{L}) \subseteq L^2_{\sigma}(\R^3) \rightarrow L^2_{\sigma}(\R^3)$ from Section \ref{LINEAROPERATOR}. 

\subsection{The limiting case $c=0$}
We first consider the limiting case $c \rightarrow 0^+$, i.e., the linear operator that occurs by perturbing the equation
\begin{align*}
\partial_t u - \Delta u = |u|^{p-1}u, \quad t>0,
\end{align*} 
around the ODE solution $v_T(t,x) = (p-1)^{-\frac{1}{p-1}} (T-t)^{-\frac{1}{p-1}}$. The corresponding linear operator 
\begin{align*}
\tilde{\mathcal{L}} : \mathcal{D}(\mathcal{L}) \subseteq L^2_{\sigma}(\R^3) \rightarrow L^2_{\sigma}(\R^3), \quad [\tilde{\mathcal{L}} f](y) = \Delta f (y) - \frac{1}{2}y \cdot \nabla f(y) + f(y),
\end{align*}
for $f \in C^{\infty}_{c}(\R^3)$ can be decomposed in the same way as $\mathcal{L}$ with corresponding unitary equivalent one-dimensional Schr\"odinger operators $\tilde{\mathcal{B}}_0$ and $\tilde{\mathcal{B}}_{\ell m}$ with $\ell \geq 1$ and $m \in \{1,...,d(\ell)\}$, see the beginning of Section \ref{LINEAROPERATOR}. Since $\tilde{\mathcal{L}}$ also possesses the unstable eigenvalue $\lambda = 1$ that comes from the freedom of choosing the blowup time $T>0$ we can derive the supersymmetric partner $\tilde{\mathcal{B}}_S$ of $\tilde{\mathcal{B}}_0$ that is isospectral to $\tilde{\mathcal{B}}_0$ except the eigenvalue $\lambda = -1$. The underlying operators have the same domains as $\mathcal{D}(\mathcal{B}_S)$ and $\mathcal{D}(\mathcal{B}_{\ell m})$, respectively, and they are given by
\begin{align*}
[\tilde{\mathcal{B}}_S u](r) = -u^{\prime \prime}(r) + \left( \frac{r^2}{16} + \frac{2}{r^2} -\frac{5}{4} \right) u(r),
\end{align*}
and
\begin{align*}
[\tilde{\mathcal{B}}_{\ell m} u](r) = -u^{\prime \prime}(r) + \left( \frac{r^2}{16} + \frac{\ell ( \ell +1)}{r^2} -\frac{7}{4} \right) u(r),
\end{align*}
for $u \in C^{\infty}_c(\R^+)$. The eigenvalues of these operators are well-known and can be computed explicitly. In particular, for the unstable part of the spectrum one obtains
\begin{align*}
\{ \lambda \in \sigma(\tilde{\mathcal{B}}_0) : \lambda \leq 0 \} = \{0,-1 \} \quad \text{and} \quad \{ \lambda \in \sigma(\tilde{\mathcal{B}}_{1m}) : \lambda \leq 0 \} = \{ -\tfrac{1}{2} \},
\end{align*}
for $m \in \{ 1,2,3 \}$ and
\begin{align*}
\{ \lambda \in \sigma(\tilde{\mathcal{B}}_{2 m}) : \lambda \leq 0 \} = \{0 \} \quad \text{and} \quad \{ \lambda \in \sigma(\tilde{\mathcal{B}}_{ \ell m}) : \lambda \leq 0 \} = \emptyset,
\end{align*}
for $m \in \{1,...,d(\ell) \}$.
\subsection{The case $c >0$ - Numerical observations} We compare the eigenvalues of the operators $\mathcal{B}_0$ and $\mathcal{B}_{\ell m}$ that depend on the parameter $c$ to those of $\tilde{\mathcal{B}}_0$ and $\tilde{\mathcal{B}}_{ \ell m}$, respectively. To simplify the discussion we restrict ourselves to the case $p=3$ with $c \in ( 0, \frac{1}{3} )$. Numerical computations suggest that there is a bijection of all eigenvalues in the limiting case $c=0$ to those for $c \in (0, \frac{1}{3})$. More precisely, for $\mathcal{B}_0$ we see an eigenvalue $\lambda_c >0$ that tends to $0$ from above as $c \rightarrow 0^+$. The same holds true for $\mathcal{B}_{2m}$. This suggests that $\mathcal{B}_0$ has only one negative eigenvalue $\lambda = -1$ and that there are no negative eigenvalues for $\mathcal{B}_{2 m}$ where $m \in \{1,...,5 \}$. For $\ell \geq 3$, due to the positivity of the corresponding potentials, there exist no negative eigenvalues. It turns out that the remaining case $\ell =1$ is the most complicated one as the number of unstable eigenvalues appears to depend on $c$. \\
Since $\lambda = -\frac{1}{2}$ is an eigenvalue for $c =0$ we assume that at least for small $c >0$ there is a negative eigenvalue $\lambda_c$ close to $ -\frac{1}{2}$. However, as $c$ increases the corresponding potential $q_1$ in Eq.~\eqref{Bl} becomes positive. This indicates that there exists some critical value $c^* \in (0, \frac{1}{3} )$ such that for all $c \in (c^*, \frac{1}{3})$ the operator $\mathcal{B}_{1m}$ does not have unstable spectrum.  However, the complicated dependency of the potential on the parameter $c$ makes it difficult to rigorously determine the exact value of $c^*$. Eventually, for large enough $c$ the potential becomes positive, which is key in the proof of Theorem \ref{maintheorem2}.

\subsection{Application of GGMT}
To support our claim on the structure of the spectrum of $\mathcal{B}_{1 m}$ we apply the variant of the GGMT criterion provided in Appendix \ref{GGMTv}. For this, we rewrite the operator as 
\begin{align*}
[\mathcal{B}_{1 m} u](r) = -u^{\prime \prime}(r) + \frac{\delta-\frac{1}{4}}{r^2} +\left(\frac{r^2}{16} -\frac{1}{4} -V(r) + \frac{2 + \frac{1}{4} - \delta}{r^2} \right) u(r),
\end{align*}
for $u \in C^{\infty}_c(\R^+)$ and suitably small $\delta >0$. This operator satisfies the assumptions of Theorem \ref{TheoremA} for the potential
\begin{align*}
Q_{c,\delta}(r) := \frac{r^2}{16} -\frac{1}{4} -V(r) + \frac{2 + \frac{1}{4} - \delta}{r^2}.
\end{align*}
We define
\begin{align*}
G_{c,\delta}(\kappa):= \frac{(\kappa-1)^{\kappa-1} \Gamma(2 \kappa)}{(4 \delta)^{\kappa - \frac{1}{2}} \kappa^{\kappa} \Gamma(\kappa)^2} \int_0^{\infty} r^{2 \kappa -1} Q_{c,\delta-}(r)^{\kappa} dr.
\end{align*}
Numerical evaluation of the integral suggest that
\begin{align*}
G_{0.11,1}(1.5) \leq 0.92 \quad \text{and} \quad G_{0.1,1}(1.5) \geq 1.04,
\end{align*}
which implies that for $c =0.11$ there are no negative eigenvalues for $\ell = 1$ and for $c= 0.1$ there is at most one negative eigenvalue for $\ell =1$. The choice of $\delta =1$ for the criterion is based on the best result for $G_{c,\delta}$ we could accomplish. By calculating $G_{c,\delta}$ for further values of $c \in (0,\frac{1}{3})$ we conjecture that there is a $c^*$ close to $0.105$ such that for all choices of $\kappa \geq \frac 32$ and sufficiently small $\delta >0$ we get $G_{c,\delta}(\kappa) >1$ for all $c \in (0,c^*)$, while $G_{c,\delta}(\kappa) < 1$ for $c \in (c^*,\frac{1}{3})$ for some choice of $\delta$ and $\kappa$. 
\pagestyle{plain}
	\bibliography{Literatur}
	\bibliographystyle{plain}

\end{document}